\newlength{\fixboxwidth}
\newcommand*{\abs}[1]{\left| #1 \right|}                                
\newcommand*{\norm}[1]{\left\| #1 \right\|}                             
\newcommand*{\sep}{\; \vrule \;}                                        
\renewcommand{\d}{\,\mathrm{d}}											
\newcommand{\loc}{\mathrm{loc}}											
\newcommand{\re}{\mathbb{R}}\newcommand{\N}{\mathbb{N}}
\newcommand{\zz}{\mathbb{Z}}\newcommand{\C}{\mathbb{C}}
\newcommand{\com}{\mathbb{C}}
\newcommand{\R}{{\re}^d}
\newcommand{\cs}{{\mathcal S}}
\newcommand{\cl}{{\mathcal L}}
\newcommand{\cf}{{\mathcal F}}
\newcommand{\cfi}{{\cf}^{-1}}
\newcommand{\supp}{{\rm supp \, }}
\newcommand{\be}{\begin{equation}}
\newcommand{\ee}{\end{equation}}
\newcommand{\beq}{\begin{eqnarray}}
\newcommand{\beqq}{\begin{eqnarray*}}
\newcommand{\eeq}{\end{eqnarray}}
\newcommand{\eeqq}{\end{eqnarray*}}
\newtheorem{theorem}{Theorem}[section]
\newaliascnt{lem}{theorem}
\newtheorem{lemma}[lem]{Lemma}
\newaliascnt{ass}{theorem}
\newaliascnt{prop}{theorem}
\newtheorem{prop}[prop]{Proposition}
\newaliascnt{cor}{theorem}
\newtheorem{corollary}[cor]{Corollary}
\newaliascnt{defi}{theorem}
\newtheorem{defi}[defi]{Definition}
\theoremstyle{definition}
\newaliascnt{ex}{theorem}
\newaliascnt{rem}{theorem}
\newtheorem{remark}[rem]{Remark}
\begin{document}


\title{On the Boundedness of Dilation Operators in the Context of Triebel-Lizorkin-Morrey Spaces}
\author{Marc Hovemann\footnote{Friedrich-Schiller-Universität Jena, Fakultät für Mathematik und Informatik, Inselplatz 5, 07743 Jena, Germany. 
Email: \href{mailto:marc.hovemann@uni-jena.de}{marc.hovemann@uni-jena.de} } 
$^{,}$\footnote{Corresponding author.}
\quad\ and \quad Markus Weimar\footnote{Julius-Maximilians-Universit\"at W\"urzburg (JMU), 
Institute of Mathematics, 
Emil-Fischer-Stra{\ss}e 30, 97074 W\"urzburg, Germany. 
Email: \href{mailto:markus.weimar@uni-wuerzburg.de}{markus.weimar@uni-wuerzburg.de} }
}
\date{Dedicated to the 90th anniversary of Hans Triebel\\[10pt] \today}

\maketitle

\noindent\textbf{Abstract:} In this paper we study the behavior of dilation operators $  D_\lambda \colon f \mapsto f(\lambda\,\cdot) $ with $ \lambda > 1 $ in the context of Triebel-Lizorkin-Morrey spaces $\mathcal{E}^{s}_{u,p,q}(\mathbb{R}^d)$. For that purpose we prove upper and lower bounds for the operator (quasi-)norm $\norm{ D_\lambda \sep \mathcal{L}\big(\mathcal{E}^s_{u,p,q}(\R)\big) } $. 
We show that for $s>\sigma_p $ the operator (quasi-)norm $\norm{ D_\lambda \sep \mathcal{L}\big(\mathcal{E}^s_{u,p,q}(\R)\big) } $ up to constants behaves as $\lambda^{s - \frac{d}{u}} $. 
For the borderline case $ s = \sigma_{p} $ we observe a behavior of the form $\lambda^{\sigma_p- \frac{d}{u}}$, multiplied with logarithmic terms of $\lambda$ that also depend on the fine index $q$. 
For $s < \sigma_{p}$ and $p \geq 1$  we find the relation $\norm{ D_\lambda \sep \mathcal{L}\big(\mathcal{E}^s_{u,p,q}(\R)\big) } \sim \lambda^{ - \frac{d}{u}}$. 
The case $s < \sigma_{p}$ and $p < 1$ is investigated as well. Our proofs are mainly based on the Fourier analytic approach to Triebel-Lizorkin-Morrey spaces. As byproducts we show an advanced Fourier multiplier theorem for band-limited functions in the context of Morrey spaces and derive some new equivalent (quasi-)norms and characterizations of $\mathcal{E}^{s}_{u,p,q}(\mathbb{R}^d)$.

\vspace{0,2 cm}

\noindent\textbf{Keywords:} Dilation Operator, Morrey space, Triebel-Lizorkin-Morrey space, Fourier multiplier

\vspace{0,2 cm}

\noindent\textbf{Mathematics Subject Classification (2010):} 
46E35, 46E30

\section{Introduction and Main Results}

Nowadays Triebel-Lizorkin spaces $F^s_{p,q} (\R)$ are well-established tools to describe the regularity of functions and distributions beyond the classical scale of $L_p$-Sobolev spaces $H^s_p(\R)$ which are included as special cases with $q=2$. They have been introduced around 1970 by Lizorkin~\cite{Liz1, Liz2} and Triebel~\cite{Tr73}. 
Later on these function spaces have been investigated in detail in the famous books of Triebel~\cite{Tr83,Tr92,Tr06,Tr20} which contain a variety of other historical references. 
In recent years a growing number of authors works with further generalisations of the Triebel-Lizorkin scale defined upon \emph{Morrey spaces} $\mathcal{M}^u_p$ instead of classical Lebesgue spaces~$L_p$. 
So, \emph{Triebel-Lizorkin-Morrey spaces} 
$$
    \mathcal{E}^{s}_{u,p,q}(\R) \quad\text{with}\quad 0 < p \leq u < \infty,\; 0 < q \leq \infty \,\text{ and }\, s \in \mathbb{R}
$$ 
(see \autoref{def_tlm} below)
as well as Triebel-Lizorkin-type spaces $F^{s,\tau}_{p,q}(\R) $ with $ 0 < p < \infty $, $ 0 < q \leq \infty $, $ s \in \mathbb{R} $ and $ 0\leq \tau<\infty $ recently attracted a lot of attention. 
The spaces $\mathcal{E}^{s}_{u,p,q}(\R) $ have been introduced in 2005 by Tang and Xu~\cite{TangXu}, while  
$F^{s,\tau}_{p,q}(\R) $ showed up for the first time in 2008 in the papers of Yang and Yuan~\cite{yy1,yy2}. 
Later on, the latter also appeared in~\cite{Tr14}, where a different notation has been used. 
Although $ \mathcal{E}^{s}_{u,p,q}(\R) $ and  $ F^{s,\tau}_{p,q}(\R) $ are defined in a  quite different way, they have a lot of properties in common. 
Under certain conditions on the parameters they even coincide. 
In particular, both classes extend the ordinary Triebel-Lizorkin scale, since $F^s_{p,q} (\R) = \mathcal{E}^{s}_{p,p,q}(\R) = F^{s,0}_{p,q} (\R)$; cf.~\cite{ysy} for details.

In this paper we study the behavior of \emph{dilation operators} 
\begin{align}\label{eq:dilation}
    D_\lambda \colon f \mapsto f(\lambda\,\cdot) \quad\text{with}\quad \lambda>1
\end{align}
in the context of Triebel-Lizorkin-Morrey spaces $\mathcal{E}^{s}_{u,p,q}(\R)$. In connection with that we find upper and lower bounds for the operator (quasi-)norm $\norm{ D_\lambda \sep \mathcal{L}\big(\mathcal{E}^s_{u,p,q}(\R)\big) }$. It turns out that the shape of this norm heavily depends on the parameters $s, u$ and $ p $. In some borderline cases also the fine index $q$ is involved. For the range of Bessel potential spaces (i.e.\ fractional $L_p$-Sobolev spaces), it is well-known since many years that with constants independent of $\lambda>1$ there holds
\begin{align}\label{eq:norm_on_Hsp}
    \norm{D_\lambda \sep \mathcal{L}\big(H^s_p(\R)\big)} \sim \lambda^{\max\{s,0\}-\frac{d}{p}}, \qquad 1<p<\infty,\; s\in\mathbb{R}.
\end{align}
For the classical Besov spaces $B^s_{p,q}(\R)$ with $s > \sigma_p$ the behavior of $D_{\lambda}$ has been studied by Triebel~\cite[Ch.~3.4.1]{Tr83} and by Edmunds and Triebel~\cite[Ch.~2.3.1]{EdTri}. The borderline case $s = \sigma_{p}$ was treated by Vybíral \cite{Vyb08} (case $s=\sigma_p$ with $1\leq p \leq \infty$) as well as Schneider~\cite{Sch09} (case $s=\sigma_p$ with $0<p \leq 1$) for $\lambda = 2^{j}$ with $j \in \mathbb{N}$. 
In the context of Triebel-Lizorkin spaces $F^{s}_{p.q}(\mathbb{R}^d)$ again the boundedness of $\norm{ D_\lambda \sep \mathcal{L}\big(F^s_{p,q}(\R)\big) }$ has been observed by Triebel~\cite[Ch.~3.4.1]{Tr83} and by Edmunds and Triebel~\cite[Ch.~2.3.1]{EdTri} for $s > \sigma_p$. Later on, these results have been complemented by Schneider and Vybíral~\cite{SchVy}. They dealt with the limiting case $s=\sigma_p$, whereby $\lambda = 2^{j}$ with $j \in \mathbb{N}$. However, for $0<p\leq 1$ there remained some gaps between their upper and lower bounds.

It is our main goal to extend these operator (quasi-)norm estimates to the setting of general Triebel-Lizorkin-Morrey spaces $\mathcal{E}^{s}_{u,p,q}(\R)$ and arbitrary dilation factors $\lambda>1$. It turns out that the difficulty of this task heavily depends on the parameters $s, u, p$ and~$q$. Indeed, already in \cite[Sect.~5.1]{Ho1} it has been observed that it is rather easy to find upper bounds for $\norm{ D_\lambda \sep \mathcal{L}\big(\mathcal{E}^s_{u,p,q}(\R)\big) }$ if $s > \sigma_{p,q}$, since then the Triebel-Lizorkin-Morrey spaces can be described via differences of higher order. Moreover, in \cite[Lem.~5.3]{Ho1} some first attempts concerning $s=0$ have been made. On the other hand, in \cite[Prop.~2.15]{Saadi1} for the Banach space case $p, q \geq 1$ and $s > 0$ upper bounds have been proved by using the connection between the Triebel-Lizorkin-type spaces and their homogeneous counterparts. 
In our main result below we extend these findings to the full range of  parameters. Moreover, we provide both upper and lower bounds for $\norm{ D_\lambda \sep \mathcal{L}\big(\mathcal{E}^s_{u,p,q}(\R)\big) }$. 
Note that in any case the condition $0<p\leq u$ implies that $\sigma_p\geq \sigma_u$ (with equality iff $p=u$) and $\sigma_u \geq 0$; cf.~\eqref{eq:sigma_p}.
\begin{theorem}\label{thm:main}
    Let $0<p\leq u <\infty$, $0<q\leq\infty$ and $s\in\mathbb{R}$. 
    Then for $\frac{1}{2}<\lambda<2$ the restriction of the dilation operator $D_\lambda$ to the Triebel-Lizorkin-Morrey space $\mathcal{E}^s_{u,p,q}(\R)$ satisfies 
    $$
        \norm{D_\lambda \sep \mathcal{L}\big(\mathcal{E}^s_{u,p,q}(\R)\big)} \sim 1.
    $$
    For $\lambda\geq 2$ the following estimates hold:
    \begin{enumerate}
        \item If $s>\sigma_p$, then
        $$
            \norm{D_\lambda \sep \mathcal{L}\big(\mathcal{E}^s_{u,p,q}(\R)\big)} 
            \sim \lambda^{s - \frac{d}{u}}.
        $$

        \item If $s=\sigma_p$, then for $p>1$ it is
        $$
            \norm{D_\lambda \sep \mathcal{L}\big(\mathcal{E}^{\sigma_p}_{u,p,q}(\R)\big)} 
            \sim \lambda^{\sigma_p- \frac{d}{u}} \, (\log_2 \lambda)^{\max\{0,\frac{1}{q}-\frac{1}{2}\}}
        $$
        while for $p=1$ there holds
        $$
            \lambda^{\sigma_p- \frac{d}{u}} (\log_2 \lambda)^{\max\{\frac{1}{p}, \frac{1}{q}\}}
            \!\gtrsim\! \norm{D_\lambda \sep \mathcal{L}\big(\mathcal{E}^{\sigma_p}_{u,p,q}(\R)\big)}
            \!\gtrsim\! \lambda^{\sigma_p- \frac{d}{u}}
            \cdot \begin{cases}
                (\log_2 \lambda)^{\max\{\frac{1}{p}, \frac{1}{q}-\frac{1}{2}\}}, & p = u, \\
                (\log_2 \lambda)^{\max\{0,\frac{1}{q}-\frac{1}{2}\}},  & p<u,
            \end{cases}
        $$
        and for $p<1$ we have
        $$
            \lambda^{\sigma_p- \frac{d}{u}} (\log_2 \lambda)^{\max\{\frac{1}{p}, \frac{1}{q}\}}
            \gtrsim \norm{D_\lambda \sep \mathcal{L}\big(\mathcal{E}^{\sigma_p}_{u,p,q}(\R)\big)}
            \gtrsim \lambda^{\sigma_p- \frac{d}{u}}
            \cdot \begin{cases}
                (\log_2 \lambda)^{\frac{1}{p}},  & p = u, \\
                1, & p<u.
            \end{cases}
        $$

        \item If $s<\sigma_p$, then for $p\geq 1$ it holds
        $$
            \norm{D_\lambda \sep \mathcal{L}\big(\mathcal{E}^s_{u,p,q}(\R)\big)} 
            \sim \lambda^{\sigma_p - \frac{d}{u}}
        $$
        while for $p<1$ we have
        $$
            \lambda^{\sigma_p- \frac{d}{u}}
            \!\gtrsim \norm{D_\lambda \sep \mathcal{L}\big(\mathcal{E}^s_{u,p,q}(\R)\big)}
            \gtrsim \lambda^{\max\{s,\sigma_u\}- \frac{d}{u}}
            \cdot \begin{cases}
                (\log_2 \lambda)^{\max\{0, \frac{1}{q}-\frac{1}{2}\}}, & s \!=\! 0 \text{ and } 1 \!\leq\! u,\\
                1, &\text{else}.
            \end{cases}
        $$
    \end{enumerate}
    Therein, all implied constants are independent of $\lambda$. To incorporate the case $q = \infty$ we use the convention $\frac{1}{\infty}= 0$.
\end{theorem}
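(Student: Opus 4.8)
The plan is to work throughout with the Fourier-analytic (Littlewood--Paley) description of $\mathcal{E}^s_{u,p,q}(\R)$ and to combine three ingredients: the homogeneity of the Morrey (quasi-)norm, $\norm{D_\lambda g \sep \mathcal{M}^u_p}=\lambda^{-d/u}\norm{g\sep\mathcal{M}^u_p}$; the band-limited (vector-valued) Fourier multiplier theorem for Morrey spaces together with the new equivalent (quasi-)norms announced in the abstract; and, in the limiting cases, the coincidence $\mathcal{E}^0_{u,p,2}(\R)=\mathcal{M}^u_p$ (valid for $1<p\le u<\infty$) plus the elementary nesting $\norm{(a_k)_{k=0}^{J}\sep\ell_q}\le(J+1)^{\max\{0,1/q-1/r\}}\norm{(a_k)_{k=0}^{J}\sep\ell_r}$ with $r=2$. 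Since $D_{\lambda\mu}=D_\lambda D_\mu$, writing $\lambda=2^J\mu$ with $J=\lfloor\log_2\lambda\rfloor\in\N$ and $1\le\mu<2$ reduces every estimate for $\lambda\ge2$ to the special dilation factors $\lambda=2^J$, provided the claim $\norm{D_\mu\sep\mathcal{L}(\mathcal{E}^s_{u,p,q}(\R))}\sim1$ for $\tfrac12<\mu<2$ is established first. That case is easy: for such $\mu$ the operator $D_\mu$ moves every dyadic frequency annulus by at most one level, so $\varphi_k(D)D_\mu f=\sum_{\abs{\nu-k}\le2}\varphi_k(D)D_\mu f_\nu$ has only $O(1)$ non-zero summands, each of which is controlled by the band-limited multiplier theorem; the matching lower bound follows from $\mathrm{Id}=D_\mu D_{1/\mu}$.

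For the upper bounds with $\lambda=2^J\ge2$ I decompose $f=\sum_{\nu\ge0}\varphi_\nu(D)f$ and observe that $\widehat{D_\lambda\varphi_\nu(D)f}$ lives in $\{\abs{\xi}\sim2^{\nu+J}\}$ for $\nu\ge1$ and in $\{\abs{\xi}\lesssim2^J\}$ for $\nu=0$; hence $\varphi_k(D)D_\lambda\varphi_\nu(D)f\equiv0$ unless $\abs{k-\nu-J}\le2$ (when $\nu\ge1$) or $k\le J+2$ (when $\nu=0$). The "high" block $k\ge J+3$ is handled by the substitution $k=\nu+J$ inside the weighted $\ell_q$-sum together with the vector-valued band-limited multiplier theorem and the homogeneity relation; it contributes $\lesssim\lambda^{s-d/u}\norm{f\sep\mathcal{E}^s_{u,p,q}(\R)}$. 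The "low" block $0\le k\le J+2$ is essentially $\sum_{k}2^{ksq}\abs{\varphi_k(D)D_\lambda\varphi_0(D)f}^q$, which I rewrite via the dilation identity $\varphi_k(D)D_\lambda=D_\lambda\,\varphi_k(\lambda\,D)$; since $\varphi_k(\lambda\,\cdot)$ is (up to a harmless cut-off) a dilate of a fixed Schwartz multiplier acting on a function band-limited to scale $2^{k-J}\le1$, the band-limited multiplier theorem and the Morrey maximal inequality give, for $s>0$, a geometric sum of order $2^{Js}\lambda^{-d/u}\norm{f_0\sep\mathcal{M}^u_p}\sim\lambda^{s-d/u}\norm{f\sep\mathcal{E}^s_{u,p,q}(\R)}$; this already yields (i) and the upper bound in (iii) for $s>0$. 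For $s\le0$ the weighted sum has $\sim J$ roughly equal terms, producing $\lambda^{-d/u}$ times a logarithmic factor: the crude estimate gives $(\log_2\lambda)^{1/q}$, which suffices for (iii) and for the case $p=1$ of (ii); for $p>1$ one upgrades it to $(\log_2\lambda)^{\max\{0,1/q-1/2\}}$ by replacing the maximal bound with the Littlewood--Paley square-function estimate behind $\mathcal{E}^0_{u,p,2}(\R)=\mathcal{M}^u_p$ and then using the $\ell_q$--$\ell_2$ nesting; the extra $(\log_2\lambda)^{1/p}$ for $p\le1$ reflects the weaker multiplier/maximal theory for Morrey spaces below the Banach range.

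The lower bounds come from explicit test functions. For (i) I dilate a single Schwartz bump $\psi$ with $\widehat\psi$ supported in $\{\abs{\xi}\sim1\}$: then $D_\lambda\psi$ sits at dyadic level $\sim J$, so $\norm{D_\lambda\psi\sep\mathcal{E}^s_{u,p,q}(\R)}\gtrsim2^{Js}\norm{\varphi_J(D)D_\lambda\psi\sep\mathcal{M}^u_p}\sim\lambda^{s-d/u}$ while $\norm{\psi\sep\mathcal{E}^s_{u,p,q}(\R)}\sim1$. For (iii) with $p\ge1$ I test on $f=D_{1/\lambda}g$ for a suitable $g$ with $\norm{g\sep\mathcal{E}^s_{u,p,q}(\R)}\sim1$; since $D_\lambda f=g$ and running the splitting above for the factor $1/\lambda<1$ (using that lowering frequencies is cheap when $s\le\sigma_p$) gives $\norm{f\sep\mathcal{E}^s_{u,p,q}(\R)}\lesssim\lambda^{d/u}$, we get $\norm{D_\lambda\sep\mathcal{L}(\mathcal{E}^s_{u,p,q}(\R))}\gtrsim\lambda^{-d/u}=\lambda^{\sigma_p-d/u}$. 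For the logarithmic lower bounds in (ii) — and the remaining $p<1$ parts of (iii) — I use a $\log$-resonant superposition $f=\sum_{\nu=0}^{J}c_\nu\psi_\nu$, where $\widehat{\psi_\nu}$ is supported in $\{\abs{\xi}\sim2^\nu\}$ and the bumps are normalized so that $\norm{f\sep\mathcal{E}^{s}_{u,p,q}(\R)}$ is comparable to a discrete weighted $\ell_q$-quasi-norm of $(c_\nu)$ while the dilated pieces $D_\lambda\psi_\nu$ are concentrated at the separated levels $\nu+J$; optimizing the coefficients produces the claimed powers of $\log_2\lambda$, and for $p=u$ one gains the additional $(\log_2\lambda)^{1/p}$ by instead stacking many atoms at a single point (extremal for $L_p$ but not for genuine Morrey spaces with $p<u$, whence the gap between the two sub-cases). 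When $s=0$ and $u\ge1$ the square-function argument again sharpens the exponent to $\max\{0,1/q-1/2\}$.

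The main obstacle is the borderline regime $s=\sigma_p$, in particular for $p\le1$: there the higher-order difference characterization of $\mathcal{E}^s_{u,p,q}(\R)$ is unavailable, so one must argue purely Fourier-analytically with a Morrey-space multiplier and maximal-function apparatus that is markedly less robust than in the classical $L_p$ setting. Quantifying this loss precisely — reconciling the square-function exponent $\max\{0,1/q-1/2\}$ (which needs $p>1$) with the $1/p$-type losses at and below $p=1$, and closing the gap between upper and lower bounds — is exactly where the difficulty concentrates, and is why the estimates for $p\le1$ in (ii) and (iii) are not claimed to be sharp.
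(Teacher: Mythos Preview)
Your upper-bound strategy matches the paper's: decompose $D_{2^J}f$ into $U_1+U_2+U_3$ according to whether the output frequency index $k$ is $0$, in $\{1,\ldots,J\}$, or $\ge J+1$; the high block $U_3$ gives $\lambda^{s-d/u}$ via an index shift, and the low block $U_2$ is controlled by the scalar band-limited Fourier multiplier theorem on $\mathcal{M}^u_p$, producing $\big(\sum_{\ell=0}^{J-1}2^{\ell(\sigma_p-s)\mu}\big)^{1/\mu}$ with $\mu=\min\{p,q\}$ (not just $J^{1/q}$ as you write; the Morrey $p$-triangle inequality contributes a $1/p$). The refinement for $p>1$ via $\mathcal{E}^0_{u,p,2}=\mathcal{M}^u_p$ is exactly right.

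The genuine gap is in your logarithmic \emph{lower} bounds for part~(ii). If you take $f=\sum_{\nu=0}^J c_\nu\psi_\nu$ with $\widehat{\psi_\nu}$ at level $\nu$ and test $\norm{D_\lambda f}/\norm{f}$, then both numerator and denominator are weighted $\ell_q$-norms of the \emph{same} sequence $(c_\nu)$: the levels of $D_\lambda f$ are just shifted by $J$, so the ratio is exactly $\lambda^{s-d/u}$ regardless of $(c_\nu)$, with no logarithm. The paper avoids this by testing on $\zeta:=D_{2^{-J}}f$ instead. Then $\widehat{\zeta}$ is supported entirely in the single annulus $\{\abs{\xi}\lesssim 1\}$, so $\norm{\zeta\sep\mathcal{E}^s_{u,p,q}}\sim 2^{Jd/u}\norm{f\sep\mathcal{M}^u_p}$ is a \emph{Morrey} norm, not an $\ell_q$-sum; for $u\ge1$ the embedding $\mathcal{E}^0_{u,\max\{1,p\},2}\hookrightarrow\mathcal{M}^u_p$ then bounds this by $(\sum_m|c_m|^2)^{1/2}$. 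Meanwhile $D_{2^J}\zeta=f$ has $\norm{f\sep\mathcal{E}^s_{u,p,q}}\sim(\sum_m 2^{msq}|c_m|^q)^{1/q}$. The $\ell_q$-vs-$\ell_2$ mismatch (choose $c\equiv1$, $s=0$) yields $J^{1/q-1/2}$; choosing $c=e_J$ or $c=e_1$ gives $2^{j(s-d/u)}$ and $2^{-jd/u}$ respectively. Your ``stacking atoms'' idea for $p=u\le1$ is in the right direction but the paper implements it differently: one takes a single compactly supported bump $\zeta\in\mathcal{D}(\R)$ and bounds $\norm{D_{2^J}\zeta\sep\mathcal{E}^{\sigma_p}_{u,p,q}}$ from below via the local-mean characterization, exploiting the spatial spreading of the convolutions $K_k\ast D_{2^J}\zeta$ over disjoint balls. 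This is also what delivers the $\sigma_u$ exponent in the $p<1$ case of~(iii), which your sketch does not cover.
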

It turns out that for $s>\sigma_p $ the operator (quasi-)norm $\norm{ D_\lambda \sep \mathcal{L}\big(\mathcal{E}^s_{u,p,q}(\R)\big) }$ behaves as $\lambda^{s - \frac{d}{u}} $ (up to constants). For the borderline case $ s = \sigma_{p} $ we observe a behavior of the form $\lambda^{\sigma_p- \frac{d}{u}}$, multiplied with logarithmic terms in $\lambda$ that also depend on the fine index~$q$. For $s < \sigma_{p}$ and $p \geq 1$  we find the relation $\norm{ D_\lambda \sep \mathcal{L}\big(\mathcal{E}^s_{u,p,q}(\R)\big) } \sim \lambda^{ - \frac{d}{u}}$. The case $s < \sigma_{p}$ and $p < 1$ is more difficult, and perhaps the lower bounds can be improved further. We note in passing that an estimate of the form $\norm{D_{2^j} \sep \mathcal{L}\big(\mathcal{E}^s_{u,p,q}(\R)\big)} \lesssim 2^{j(s-\frac{d}{u})}$ for some $0<p\leq u < \infty$, $0<q\leq\infty$, $s\in\mathbb{R}$ and all $j\in\N$ can hold only if $s\geq \sigma_u$ as otherwise it would contradict \autoref{thm:main}(iii).

Specializing \autoref{thm:main} to the case $u:=p$ we obtain the following results for the scale of classical Triebel-Lizorkin spaces $F^s_{p,q}(\R)=\mathcal{E}^s_{p,p,q}(\R)$ which particularly also cover~\eqref{eq:norm_on_Hsp} by choosing $p>1$ and $q:=2$. 
If $\lambda=2^j$ with $j\in\N$ and $s\geq \sigma_p$ our estimates exactly coincide with the findings in \cite{EdTri} and \cite{SchVy}, in this regard see also \autoref{rem:correctionFspq} below. Otherwise, they seem to be new.

\begin{corollary}\label{cor:main}
    Let $0<p<\infty$, $0<q\leq\infty$ and $s\in\mathbb{R}$. Then for $\frac{1}{2}<\lambda<2$ we have
    $$
        \norm{D_\lambda \sep \mathcal{L}\big(F^s_{p,q}(\R)\big)} \sim 1,
    $$
    while for $\lambda\geq 2$ there holds
    $$
            \norm{D_\lambda \sep \mathcal{L}\big(F^s_{p,q}(\R)\big)}
            \sim \lambda^{\max\{s,\sigma_p\}- \frac{d}{p}} \qquad\text{if}\qquad s\neq \sigma_p 
    $$
    as well as
    $$
            \norm{D_\lambda \sep \mathcal{L}\big(F^{\sigma_p}_{p,q}(\R)\big)}
            \sim \lambda^{\sigma_p- \frac{d}{p}} \, (\log_2 \lambda)^{\max\{0, \frac{1}{q}-\frac{1}{2}\}} \qquad\text{if}\qquad p> 1 
    $$
    and
    $$
            \lambda^{\sigma_p- \frac{d}{p}} (\log_2 \lambda)^{\max\{\frac{1}{p}, \frac{1}{q}\}}
            \gtrsim \norm{D_\lambda \sep \mathcal{L}\big(F^{\sigma_p}_{p,q}(\R)\big)}
            \gtrsim \lambda^{\sigma_p- \frac{d}{p}} \cdot \begin{cases}
                (\log_2 \lambda)^{\max\{\frac{1}{p} , \frac{1}{q}-\frac{1}{2}\}}, & p=1,\\
                (\log_2 \lambda)^{\frac{1}{p}}, & p < 1.
            \end{cases} 
    $$
    Again, all implied constants are independent of $\lambda$. To incorporate the case $q = \infty$ we use the convention $\frac{1}{\infty}= 0$.
\end{corollary}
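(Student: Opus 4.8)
The proof is a direct specialisation of \autoref{thm:main} to the diagonal case $u:=p$, combined with the coincidence $F^s_{p,q}(\R)=\mathcal{E}^s_{p,p,q}(\R)$ recorded in the introduction (see~\cite{ysy}). The plan is therefore to set $u=p$ in each of the items of \autoref{thm:main}, to observe that then $\sigma_u=\sigma_p$ (so that in particular $\max\{s,\sigma_u\}=\max\{s,\sigma_p\}$, and every sub-case alternative of the form ``$p=u$'' is the one that applies), and to simplify the resulting expressions.

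Concretely, for $\lambda\ge 2$ I would argue as follows. If $s\neq\sigma_p$, the claimed relation $\norm{D_\lambda\sep\mathcal{L}(F^s_{p,q}(\R))}\sim\lambda^{\max\{s,\sigma_p\}-\frac{d}{p}}$ is obtained by merging items~(i) and~(iii): for $s>\sigma_p$ item~(i) gives $\lambda^{s-\frac{d}{p}}$ and here $\max\{s,\sigma_p\}=s$; for $s<\sigma_p$ and $p\ge1$ (so $\sigma_p=0>s$) item~(iii) gives $\lambda^{\sigma_p-\frac{d}{p}}=\lambda^{\max\{s,\sigma_p\}-\frac{d}{p}}$; and for $s<\sigma_p$ and $p<1$ the condition ``$s=0$ and $1\le u$'' appearing in item~(iii) is void since $u=p<1$, whence the upper and lower bounds there both collapse to $\lambda^{\sigma_p-\frac{d}{p}}=\lambda^{\max\{s,\sigma_p\}-\frac{d}{p}}$. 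For the borderline exponent $s=\sigma_p$ one simply reads off item~(ii): the case $p>1$ is immediate, while in the cases $p=1$ and $p<1$ the ``$p=u$'' branch of the lower bound is the active one, yielding exactly $(\log_2\lambda)^{\max\{1/p,1/q-1/2\}}$ respectively $(\log_2\lambda)^{1/p}$, paired with the common upper bound $(\log_2\lambda)^{\max\{1/p,1/q\}}$.

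Finally, for $\tfrac12<\lambda<2$ the equivalence $\norm{D_\lambda\sep\mathcal{L}(F^s_{p,q}(\R))}\sim1$ is just the corresponding statement of \autoref{thm:main} with $u=p$. There is no genuine obstacle in this deduction; the only thing requiring care is the (routine) bookkeeping of the various maxima and of which branch of each case distinction is selected once $u=p$. As a consistency check one recovers~\eqref{eq:norm_on_Hsp} by taking $p>1$ and $q:=2$: then $\sigma_p=0$, so for $s\neq0$ the exponent is $\max\{s,0\}-\frac{d}{p}$, while for $s=0$ one has $\max\{0,1/q-1/2\}=0$ and again the exponent equals $-\frac{d}{p}=\max\{s,0\}-\frac{d}{p}$; likewise, for $\lambda=2^j$ and $s\ge\sigma_p$ the bounds reproduce those of~\cite{EdTri,SchVy} (cf.~\autoref{rem:correctionFspq}).
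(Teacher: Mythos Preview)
Your proposal is correct and follows exactly the same approach as the paper: the paper's own proof consists of the single sentence that \autoref{cor:main} is a direct consequence of \autoref{thm:main} together with the identification $F^s_{p,q}(\R)=\mathcal{E}^s_{p,p,q}(\R)$ from \autoref{l_bp1}(iv). Your more detailed bookkeeping (verifying which branch of each case distinction is selected when $u=p$, in particular that $\sigma_u=\sigma_p$ forces the lower bound in item~(iii) for $p<1$ to match the upper bound) is accurate and simply spells out what the paper leaves implicit.
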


Dilation operators $D_{\lambda}$ of the form \eqref{eq:dilation} have a lot of significant applications within the theory of function spaces, and also in related fields of research. For example, they have been widely used in the regularity analysis of PDEs as an important tool in order to construct corresponding (appropriately weighted) function spaces; see, e.g.\ \cite{CioSchWei, Kry, Lot, MazRos}.
Furthermore, the operators $D_\lambda$ play a crucial role in the theories of so-called refined localization spaces \cite[Sect.~2.2]{Tr08} and tempered homogeneous spaces \cite{Tr15} due to Triebel. In addition, sharp bounds of the operator norm $\norm{ D_\lambda \sep \mathcal{L}\big(\mathcal{E}^s_{u,p,q}(\R)\big) } $ can be used to disprove the equivalence of certain quasi-norms, at least if specific conditions on the parameters hold. For instance, in \cite[Sect.~5.1]{Ho1} and \cite[Ch.~6.1]{H21} a result comparable with  \autoref{thm:main}(ii) has been used to show that for $s = 0$ the Triebel-Lizorkin-Morrey spaces can not be equivalently described via ball means of higher order differences. 

This paper is organized as follows. 
In \autoref{subsec_dil_basic} we collect some basic properties of the dilation operators $D_{\lambda}$. Throughout Sections \ref{subsec_Morrey} and \ref{Sec_Defi} we recall the definitions and fundamental facts about Morrey and Triebel-Lizorkin-Morrey spaces, respectively. In connection with that in \autoref{prop:FM_M} we also prove a new Fourier multiplier theorem for band-limited functions in the context of Morrey spaces. 
\autoref{sec_main_sec_main} is devoted to the proof of our main \autoref{thm:main}. 
For that purpose, in \autoref{sec_dilation_near1} we show that it is sufficient to prove our statements only for dyadic dilation operators of the form $D_{2^{j}}$ with $j \in \mathbb{N}$. Afterwards, in \autoref{sec_upper_bounds_1} all upper bounds stated in \autoref{thm:main} will be proven. The corresponding lower bounds are derived in \autoref{sec_lower_bounds_1} and in \autoref{sec_put_together_1} all results are combined in order to receive \autoref{thm:main} and \autoref{cor:main}, respectively. 
Finally, based on these findings, we construct some new equvialent (quasi-)norms for $\mathcal{E}^s_{u,p,q}(\R)$ in the last \autoref{sect:characterizations}; see Theorems \ref{thm:characterization} and \ref{thm:characterization_by_M}.
Some of them can even be used as characterizations. 

\medskip

\noindent\textbf{Notation.} As usual, $\N$ denotes the natural numbers, $\N_0:=\N\cup\{0\}$, $\zz$ describes the integers and~$\re$ the real numbers. Further, $\R$ with $d\in\N$ denotes the $d$-dimensional Euclidean space and we put
$$
    B(x,t) := \left\{y\in \R \,:\, \abs{x-y}< t \right\}\, , \qquad x \in \R,\,\; t>0.
$$
All our functions and distributions are assumed to take values in the complex numbers $\com$. 
In particular, we let $M(\R)$ be the space of equivalence classes of measurable functions on~$\R$ (w.r.t.\ equality a.e.), $\mathcal{S}(\R)\subseteq M(\R)$ denotes the collection of all Schwartz functions and
$\mathcal{D}(\R):=C_0^\infty(\R) \subseteq \mathcal{S}(\R)$ is the set of infinitely often differentiable functions with compact support.
For domains (open connected sets) $\Omega\subseteq\R$ and $0<p<\infty$, we let $L_p^\loc(\Omega)$ be the set of measurable and locally $p$-integrable functions on $\Omega$ while its subset $L_p(\Omega)\subseteq L_p^\loc(\Omega)$ denotes the ordinary Lebesgue space.
The sets $\mathcal{D}(\R)$ and $\mathcal{S}(\R)$ are endowed with the usual topology and we write $\mathcal{D}'(\R)$ resp.\ $\mathcal{S}'(\R)$ for their topological duals, i.e.\ the spaces of corresponding distributions (continuous linear functionals w.r.t.\ the {weak-$\ast$} topology).
The symbol $\cf$ refers to the Fourier transform and $\cfi$ to its inverse, both defined on $\cs'(\R)$. 
The (quasi-)norm in a (quasi-)Banach space $X$ is denoted by $\norm{\cdot \sep X}$ and by $\cl (X)$ we denote the space of all linear bounded operators $T\colon X\to X$ on it.
Moreover, given two spaces $X$ and $Y$, we write $X \hookrightarrow Y$ if the natural embedding of $X$ into $Y$ is continuous. 
In addition, we shall use the well-established quantity
\begin{align}\label{eq:sigma_p}
    \sigma_p:= d\,  \max\!\left\{0, \frac 1p - 1\right\}, \qquad 0<p<\infty.
\end{align}
With $A \lesssim B$ we mean $A \leq c B$ with some constant $c > 0$ independent of $A$ and $B$. Finally, the notation $ A \sim B $ stands for $A \lesssim B$ and $B \lesssim A$.

\section{Preliminaries}\label{sect:prelim}
Let us start with a brief overview of our objects of interest.

\subsection{Basic Properties of Dilation Operators \texorpdfstring{$D_\lambda$}{Dlambda} with \texorpdfstring{$\lambda>0$}{lambda>0}}\label{subsec_dil_basic}
For factors $\lambda>0$ we let
$$
    D_\lambda \colon M(\R) \to M(\R), \qquad g \mapsto D_\lambda g := g(\lambda \,\cdot),
$$
denote the corresponding dilation operator for measurable functions. 
Then it is obvious that for $0<p<\infty$ and $X\in\{\mathcal{D}, \mathcal{S}, L_p^\loc\}$ the restriction of $D_\lambda$ is a linear map from~$X(\R)$ to itself.
Therefore, we can extend (the restriction of) $D_\lambda$ to spaces of distributions $Y'(\R)$, where $Y\in \{\mathcal{D}, \mathcal{S}\}$, by setting
\begin{align*}
    (D_\lambda f)(\varphi) := \lambda^{-d} f(D_{\lambda^{-1}} \varphi), \qquad f\in Y'(\R),\;  \varphi\in Y(\R),
\end{align*}
such that also $D_{\lambda} \colon Y'(\R)\to Y'(\R)$.
Note that if $f\in Y'(\R)$ is regular, i.e.\ induced by some $\widetilde{f}\in L_1^\loc(\R)$, then this definition yields that also $D_\lambda f$ is regular and induced by the function~$D_\lambda \widetilde{f}\in L_1^\loc(\R)$. For the sake of simplicity all restrictions or extensions of $D_\lambda$ will be denoted by $D_\lambda$ again. 

Let us collect some useful properties.
First of all note that in all settings $D_\lambda$ is bijective on $X(\R)$ resp.\ $Y'(\R)$ with $(D_\lambda)^{-1}=D_{\lambda^{-1}}$. 
Since obviously $D_1=\mathrm{id}$, the set $\{D_\lambda : \lambda > 0\}$ together with concatenation actually forms a group.
In particular, we have 
$$
    D_{\lambda_1}\circ D_{\lambda_2}=D_{\lambda_1\cdot\lambda_2}, \qquad \lambda_1,\lambda_2>0,
$$
such that, if $\psi_\ell := D_{2^{-\ell}}\psi$ for some $\psi\in \mathcal{D}(\R)$ and all $\ell\in\mathbb{Z}$, there holds
$$
    D_{2^j}\psi_k = D_{2^{-(k-j)}}\psi = \psi_{k-j}, \qquad j,k\in\mathbb{Z}.
$$ 
Second, for $\varphi \in \mathcal{S}(\R)$ and $\lambda>0$ it is easy to show that
\begin{align}
    \label{eq:FTj_phi}
    \mathcal{F}^{\pm 1}(D_\lambda \varphi) = \lambda^{-d} \, D_{\lambda^{-1}} [ \mathcal{F}^{\pm 1}\varphi ] \in \mathcal{S}(\R)
\end{align}
such that consequently for $f\in \mathcal{S}'(\R)$ also
\begin{align*}
    \mathcal{F}^{\pm 1} (D_\lambda f) = \lambda^{-d} \, D_{\lambda^{-1}}[\mathcal{F}^{\pm 1}f] \in \mathcal{S}'(\R).
\end{align*}
Finally, for $\eta\in\mathcal{D}(\R)$ and $f\in \mathcal{S}'(\R)$ straightforward calculations show that for all $\lambda>0$
\begin{align}
    \label{eq:building_block_shift}
    \mathcal{F}^{-1} \big( \eta \, \mathcal{F} [D_\lambda f]\big) 
    = D_\lambda \big[\mathcal{F}^{-1} \big( [D_\lambda\eta] \, \mathcal{F} f\big) \big]
\end{align}
as elements of $\mathcal{S}'(\R)$. 
However, using the famous Paley-Wiener-Schwartz Theorem (see e.g.\ Triebel~\cite[Thm.~1.2.1/2]{Tr83}), we find that these distributions can be identified with entire analytic functions restricted to $\R$. Consequently \eqref{eq:building_block_shift} also holds pointwise.

\subsection{Morrey Spaces \texorpdfstring{$\mathcal{M}^{u}_{p}(\R)$}{Mpu(Rd)}}\label{subsec_Morrey}
The Triebel-Lizorkin-Morrey spaces $\mathcal{E}^{s}_{u,p,q}(\R)$ we are interested in are spaces of tempered distributions built upon Morrey spaces~$\mathcal{M}^{u}_{p}(\R)$. Therefore, let us recall the definition and basic properties of the latter. 
\begin{defi}[Morrey space $\mathcal{M}^{u}_{p}(\R)$]
\label{def_mor}
    For $0 < p \leq u < \infty$ the Morrey space $\mathcal{M}^{u}_{p}(\R)$ is the collection of all functions $f \in L_{p}^{\loc}(\R)$ such that
    \begin{align*}
        \norm{f \sep \mathcal{M}^{u}_{p}(\R)}
        := \sup_{y \in \R, R > 0} \abs{B(y,R)}^{\frac{1}{u}-\frac{1}{p}} \left( \int_{B(y,R)} \abs{f(x)}^{p} \d x  \right)^{\frac{1}{p}} < \infty.
    \end{align*} 
\end{defi}
The Morrey spaces $\mathcal{M}^{u}_{p}(\R)$ are known to be translation-invariant quasi-Banach spaces (and even Banach spaces if $p \geq 1$). 
They have many connections to ordinary Lebesgue spaces $ L_{p}(\R)$. 
Indeed, for $ 0 < p_{2} \leq p_{1} \leq u < \infty $ we have
\begin{align}
    \label{eq:morrey_embedding}
    L_{u}(\R) 
    = \mathcal{M}^{u}_{u}(\R) 
    \hookrightarrow \mathcal{M}^{u}_{p_{1}}(\R)
    \hookrightarrow \mathcal{M}^{u}_{p_{2}}(\R).
\end{align}
Moreover, we shall frequently use the following simple properties:
\begin{lemma}\label{lem:M}
    Let $0 < p \leq u < \infty$ as well as $0<r<\infty$ and $\lambda>0$.
    \begin{enumerate}
        \item If $f\colon\R\to\C$ is measurable and $g\in \mathcal{M}^{u}_{p}(\R)$ such that $\abs{f(x)} \leq \abs{g(x)}$ for a.e.\ $x\in\R$, then $f\in \mathcal{M}^{u}_{p}(\R)$ and
        $
            \norm{f \sep \mathcal{M}^{u}_{p}(\R)} 
            \leq \norm{g \sep \mathcal{M}^{u}_{p}(\R)}.
        $

        \item There holds $f\in \mathcal{M}^{u}_{p}(\R)$ if and only if $\abs{f}^r \in \mathcal{M}^{\frac{u}{r}}_{\frac{p}{r}}(\R)$. In this case we have
        $$
            \norm{\abs{f}^r \sep \mathcal{M}^{\frac{u}{r}}_{\frac{p}{r}}(\R)}
            = \norm{f \sep \mathcal{M}^{u}_{p}(\R)}^r.
        $$

        \item We have $f\in \mathcal{M}^{u}_{p}(\R)$ if and only if $D_\lambda f \in \mathcal{M}^{u}_{p}(\R)$. In this case there holds
        $$
            \norm{D_\lambda f \sep \mathcal{M}^{u}_{p}(\R)} 
            \sim \lambda^{-\frac{d}{u}} \norm{ f \sep \mathcal{M}^{u}_{p}(\R)}
        $$
        with constants that do not depend on $\lambda$ and $f$.

        \item For $f,g\in \mathcal{M}^{u}_{p}(\R)$ and $0< \tau \leq \min\{1,p\}$  we have
        $$
            \norm{f+g \sep \mathcal{M}^{u}_{p}(\R)}^\tau 
            \leq \norm{f \sep \mathcal{M}^{u}_{p}(\R)}^\tau + \norm{g \sep \mathcal{M}^{u}_{p}(\R)}^\tau.
        $$
    \end{enumerate}
\end{lemma}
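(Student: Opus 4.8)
The plan is to verify the four assertions one after the other, each being a direct consequence of the definition of $\norm{\cdot \sep \mathcal{M}^{u}_{p}(\R)}$ together with elementary properties of the Lebesgue integral. First I would dispose of (i): if $\abs{f(x)} \leq \abs{g(x)}$ a.e., then for every ball $B(y,R)$ the monotonicity of the integral gives $\int_{B(y,R)} \abs{f(x)}^p \d x \leq \int_{B(y,R)} \abs{g(x)}^p \d x$; multiplying by $\abs{B(y,R)}^{\frac{1}{u}-\frac{1}{p}}$, taking $p$-th roots, and then the supremum over all $y\in\R$, $R>0$ yields the claimed inequality, and in particular $f\in\mathcal{M}^{u}_{p}(\R)$ whenever $g$ is. For (ii), the key observation is that $\abs{f(x)}^r \in \mathcal{M}^{u/r}_{p/r}(\R)$ precisely means that $\sup_{y,R} \abs{B(y,R)}^{\frac{r}{u}-\frac{r}{p}}\big(\int_{B(y,R)} \abs{f(x)}^{p}\d x\big)^{r/p}$ is finite; but this supremum is exactly $\big(\norm{f \sep \mathcal{M}^{u}_{p}(\R)}\big)^r$ since raising a nonnegative quantity to the fixed power $r>0$ commutes with taking suprema. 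This is really just a rewriting of the defining expression, so no genuine estimate is involved.

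For (iii) I would compute directly with the substitution $x = z/\lambda$. Writing $\abs{B(y,R)} = c_d R^d$ with $c_d:=\abs{B(0,1)}$, we have for any $y\in\R$, $R>0$
\begin{align*}
    \abs{B(y,R)}^{\frac{1}{u}-\frac{1}{p}}\left(\int_{B(y,R)} \abs{(D_\lambda f)(x)}^{p}\d x\right)^{\frac{1}{p}}
    = (c_d R^d)^{\frac{1}{u}-\frac{1}{p}}\left(\int_{B(y,R)} \abs{f(\lambda x)}^{p}\d x\right)^{\frac{1}{p}}.
\end{align*}
The change of variables $z=\lambda x$ turns the inner integral into $\lambda^{-d}\int_{B(\lambda y,\lambda R)}\abs{f(z)}^{p}\d z$, and since $\abs{B(\lambda y,\lambda R)} = c_d (\lambda R)^d = \lambda^d \abs{B(y,R)}$ we obtain, after collecting the powers of $R$, a bound of the form $c_d^{\frac{1}{u}-\frac{1}{p}}\,\lambda^{-\frac{d}{p}}\,\lambda^{d\left(\frac{1}{p}-\frac{1}{u}\right)}$ times the corresponding Morrey expression for $f$ over the ball $B(\lambda y,\lambda R)$; the net $\lambda$-exponent is $-\frac{d}{u}$. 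Taking the supremum over $y$ and $R$ on both sides (noting that $(y,R)\mapsto(\lambda y,\lambda R)$ is a bijection of the index set) gives $\norm{D_\lambda f \sep \mathcal{M}^{u}_{p}(\R)} = \lambda^{-\frac{d}{u}}\norm{f \sep \mathcal{M}^{u}_{p}(\R)}$; in fact one gets an exact identity, which is stronger than the stated $\sim$, and in particular the two spaces coincide. Finally, for (iv) I would fix an arbitrary ball $B(y,R)$ and apply the elementary inequality $\norm{a+b \sep L_p(B(y,R))}^\tau \leq \norm{a \sep L_p(B(y,R))}^\tau + \norm{b \sep L_p(B(y,R))}^\tau$, which holds for $0<\tau\leq\min\{1,p\}$ (it combines the triangle inequality in $L_p$ when $p\geq 1$ with the $p$-subadditivity $\abs{a+b}^p\leq\abs{a}^p+\abs{b}^p$ when $p<1$, together with the concavity of $t\mapsto t^{\tau/p}$ when $\tau\leq p$). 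Multiplying by the factor $\abs{B(y,R)}^{\tau(\frac{1}{u}-\frac{1}{p})}$ and using the superadditivity of the supremum (i.e.\ $\sup(A+B)\leq \sup A + \sup B$ applied to the nonnegative functions $y,R\mapsto \abs{B(y,R)}^{\tau(\frac1u-\frac1p)}\norm{f\sep L_p(B(y,R))}^\tau$ and the analogue for $g$) yields the asserted quasi-triangle inequality.

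None of the four parts presents a real obstacle; the work is entirely routine. If anything, the only point requiring a moment of care is the correct bookkeeping of the exponents of $\lambda$ in (iii) and the precise range $0<\tau\leq\min\{1,p\}$ in (iv), where one must distinguish the cases $p\geq 1$ and $p<1$ to justify the underlying scalar inequality on $L_p(B(y,R))$.
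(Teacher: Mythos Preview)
Your proposal is correct and proceeds along the same elementary lines as the paper, which declares (i) obvious, cites references for (ii) and (iii), and only writes out (iv); your direct computations for (ii) and (iii) are exactly what those references contain, and your observation that (iii) is in fact an \emph{equality} $\norm{D_\lambda f \sep \mathcal{M}^{u}_{p}(\R)} = \lambda^{-d/u}\norm{f \sep \mathcal{M}^{u}_{p}(\R)}$ is correct and slightly sharper than the stated $\sim$. The only organizational difference is in (iv): the paper first uses (ii) to rewrite $\norm{f+g \sep \mathcal{M}^{u}_{p}}^\tau = \norm{\abs{f+g}^\tau \sep \mathcal{M}^{u/\tau}_{p/\tau}}$, applies the pointwise bound $\abs{f+g}^\tau \leq \abs{f}^\tau + \abs{g}^\tau$ together with (i), and then invokes the genuine triangle inequality in the \emph{Banach} Morrey space $\mathcal{M}^{u/\tau}_{p/\tau}$ (since $p/\tau \geq 1$); you instead work ball-by-ball in $L_p(B(y,R))$. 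Both arguments are equivalent and equally short.
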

\begin{proof}
    The first assertion is obvious. For (ii) we refer to \cite[Lem.~3(vi)]{HoWei1} and for (iii) to \cite[Formula~(3.20)]{Sic}. For part~(iv) we note that $\abs{f(x)+g(x)}^\tau \leq \abs{f(x)}^\tau + \abs{g(x)}^\tau$ for a.e.\ $x\in\R$, as $\tau\leq 1$. Hence, from (ii),  (i) and $\frac{p}{\tau}\geq 1$ it follows 
    \begin{align*}
        \norm{f+g \sep \mathcal{M}^{u}_{p}(\R)}^\tau
        = \norm{ \abs{f+g}^\tau \sep \mathcal{M}^{\frac{u}{\tau}}_{\frac{p}{\tau}}(\R)}
        &\leq \norm{ \abs{f}^\tau \sep \mathcal{M}^{\frac{u}{\tau}}_{\frac{p}{\tau}}(\R)} + \norm{\abs{g}^\tau \sep \mathcal{M}^{\frac{u}{\tau}}_{\frac{p}{\tau}}(\R)} \\
        &= \norm{ f \sep \mathcal{M}^{u}_{p}(\R)}^\tau + \norm{g \sep \mathcal{M}^{u}_{p}(\R)}^\tau.
    \end{align*}
    So the proof is complete.
\end{proof}

Besides these rather elementary properties, we will employ Fourier multipliers for Morrey spaces.
The following vector-valued assertion was shown by Tang and Xu~\cite{TangXu} using maximal operators. Therein, $H^\nu(\R)$ denotes the $L_2$-Bessel potential space of order $\nu\in\mathbb{R}$.
\begin{prop}[{\cite[Thm.~2.7]{TangXu}}]\label{prop:vv-FM}
    Let $0 < p \leq u < \infty$ as well as $0 < q \leq \infty$ and let $(f_k)_{k\in\N_0}\subseteq \mathcal{S}'(\R)$ such that $f_k\in L_p(\R)$ and $\supp(\mathcal{F}f_k)\subseteq \Omega_k$, where $\Omega_k\subseteq \R$ is compact with radius $d_k>0$, $k\in\N_0$. 
    Further, let $(M_k)_{k\in\N_0}\subseteq H^\nu(\R)$, where $\nu > \frac{d}{2}+\frac{d}{\min\{p,q\}}$. Then
    \begin{align*}
        &\norm{\bigg( \sum_{k=0}^\infty \abs{\big(\mathcal{F}^{-1}[M_k\, \mathcal{F}
        f_k]\big)(\cdot)}^q \bigg)^{\frac{1}{q}} \sep \mathcal{M}^u_p(\R)} \\
        &\qquad \lesssim \sup_{k\in\N_0} \norm{D_{d_k}M_k \sep H^\nu(\R)} \, \norm{\bigg( \sum_{k=0}^\infty \abs{f_k(\cdot)}^q \bigg)^{\frac{1}{q}} \sep \mathcal{M}^u_p(\R)}
    \end{align*}
    (usual modification if $q=\infty$) with an implied constant that does not depend on $ (f_k)_{k\in\N_0} $, $ (M_k)_{k\in\N_0} $ or $ (d_k)_{k\in\N_0} $.
\end{prop}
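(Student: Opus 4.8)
The plan is to run the standard band-limited Fourier-multiplier machinery and to reduce everything to a vector-valued (Fefferman--Stein type) maximal inequality in the Morrey scale. First I would normalize the frequency supports: each $\Omega_k$ lies in some ball $B(\xi_k,d_k)$, and replacing $f_k$ by its modulation $x\mapsto\mathrm{e}^{-\mathrm{i}\,\xi_k\cdot x}f_k(x)$ and $M_k$ by $M_k(\cdot+\xi_k)$ changes neither $\abs{f_k(\cdot)}$, nor $\abs{\mathcal{F}^{-1}[M_k\mathcal{F}f_k](\cdot)}$, nor---by translation invariance of $H^\nu(\R)$---the number $\norm{D_{d_k}M_k\sep H^\nu(\R)}$; hence we may assume $\supp(\mathcal{F}f_k)\subseteq B(0,d_k)$. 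Now fix once and for all some $\chi\in\mathcal{D}(\R)$ with $\chi\equiv1$ on $B(0,1)$ and set $\chi_k:=D_{d_k^{-1}}\chi$, so that $\chi_k\equiv1$ on $B(0,d_k)$. Since $M_k\,\mathcal{F}f_k=(\chi_kM_k)\,\mathcal{F}f_k$, using the identity $\mathcal{F}^{-1}(D_\lambda g)=\lambda^{-d}D_{\lambda^{-1}}[\mathcal{F}^{-1}g]$ (cf.\ \eqref{eq:FTj_phi}) together with the elementary relation $\chi_kM_k=D_{d_k^{-1}}[\chi\cdot D_{d_k}M_k]$ one obtains
\begin{align*}
    \mathcal{F}^{-1}[M_k\,\mathcal{F}f_k](x)=(F_k*f_k)(x), \qquad F_k(y):=d_k^{\,d}\,\big(\mathcal{F}^{-1}[\chi\cdot D_{d_k}M_k]\big)(d_k\,y),
\end{align*}
and, since $\chi\cdot D_{d_k}M_k\in H^\nu(\R)$ has compact support, $F_k\in L_1(\R)$ and this convolution is pointwise meaningful.

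For the pointwise estimate fix auxiliary parameters $0<r<\min\{p,q\}$ and $a>d/r$. Then for every $x\in\R$,
\begin{align*}
    \abs{(F_k*f_k)(x)}\leq I_k\cdot\sup_{z\in\R}\frac{\abs{f_k(x-z)}}{(1+\abs{d_kz})^{a}}, \qquad I_k:=\int_\R\abs{F_k(y)}\,(1+\abs{d_ky})^{a}\d y,
\end{align*}
and the Peetre maximal inequality for functions with Fourier support in $B(0,d_k)$ bounds the last supremum by $c\,\big(M_{\mathrm{HL}}(\abs{f_k}^{r})(x)\big)^{1/r}$, where $M_{\mathrm{HL}}$ denotes the Hardy--Littlewood maximal operator and $c=c(d,a,r)$. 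For $I_k$ I would substitute $w=d_ky$, which---thanks to the formula for $F_k$---cancels every power of $d_k$ and leaves $I_k=\int_\R\abs{G_k(w)}\,(1+\abs{w})^{a}\d w$ with $G_k:=\mathcal{F}^{-1}[\chi\cdot D_{d_k}M_k]$; a Cauchy--Schwarz step (using $\int_\R(1+\abs{w})^{-2\sigma}\d w<\infty$ for some $\sigma>d/2$ with $a+\sigma\leq\nu$) followed by Plancherel's theorem and the boundedness of multiplication by $\chi\in\mathcal{D}(\R)$ on $H^\nu(\R)$ then yields $I_k\lesssim\norm{D_{d_k}M_k\sep H^\nu(\R)}$ with a constant independent of $k$. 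Note that the assumption $\nu>\frac{d}{2}+\frac{d}{\min\{p,q\}}$ is exactly what makes such a choice of $r,a,\sigma$ possible.

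It remains to take $\ell_q$-norms in $k$ and then the $\mathcal{M}^u_p(\R)$-norm. Combining the two bounds,
\begin{align*}
    \Big(\sum_{k=0}^{\infty}\abs{\mathcal{F}^{-1}[M_k\mathcal{F}f_k](\cdot)}^{q}\Big)^{1/q}\lesssim\Big(\sup_{k\in\N_0}\norm{D_{d_k}M_k\sep H^\nu(\R)}\Big)\Big(\sum_{k=0}^{\infty}\big(M_{\mathrm{HL}}(\abs{f_k}^{r})(\cdot)\big)^{q/r}\Big)^{1/q},
\end{align*}
and raising this to the power $r$, invoking \autoref{lem:M}(ii) with exponent $r$, and writing $g_k:=\abs{f_k}^{r}$ reduces the claim to the vector-valued maximal estimate
$$
    \norm{\Big(\sum_{k}\big(M_{\mathrm{HL}}g_k\big)^{q/r}\Big)^{r/q}\sep\mathcal{M}^{u/r}_{p/r}(\R)}\lesssim\norm{\Big(\sum_{k}g_k^{q/r}\Big)^{r/q}\sep\mathcal{M}^{u/r}_{p/r}(\R)},
$$
which is available because $\frac pr>1$ and $\frac qr>1$ (for $q=\infty$ one replaces the inner sum by a supremum and uses the scalar maximal inequality on $\mathcal{M}^{u/r}_{p/r}(\R)$). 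This last ingredient is the genuine obstacle: unlike its $L_p$-counterpart, the Fefferman--Stein vector-valued maximal inequality in the Morrey setting is a substantial theorem in its own right (it can be quoted from the work of Sawano--Tanaka or Tang--Xu), whereas everything else above is the routine band-limited-function toolkit. A secondary point requiring care is the $d_k$-bookkeeping in the estimate of $I_k$: it has to be organised---e.g.\ via the reformulation of $F_k$ in terms of $D_{d_k}M_k$ above---so that the final constant is truly independent of $k$.
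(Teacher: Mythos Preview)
The paper does not supply its own proof of this proposition; it is quoted directly from Tang and Xu \cite[Thm.~2.7]{TangXu}, with only the one-line comment that the assertion ``was shown by Tang and Xu \ldots\ using maximal operators.'' Your proposal is correct and reconstructs exactly that maximal-operator route: normalisation of the frequency supports, the pointwise bound via Peetre's maximal function for band-limited $f_k$, the $d_k$-independent estimate of the kernel integral $I_k$ in terms of $\norm{D_{d_k}M_k\sep H^\nu(\R)}$, and the final appeal to the Fefferman--Stein vector-valued maximal inequality in $\mathcal{M}^{u/r}_{p/r}(\R)$ with $r<\min\{p,q\}$. This is precisely how Tang and Xu argue (and, in the $L_p$ setting, how Triebel argues in \cite[Sect.~1.6]{Tr83}), so there is no alternative approach in the paper to compare against; your identification of the Morrey Fefferman--Stein inequality as the one non-routine ingredient is also accurate.
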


Obviously, \autoref{prop:vv-FM} particularly covers an ordinary Fourier multiplier statement about single functions $f$ and $M$ as special case.
However, in view of the applications we have in mind, we shall need a stronger assertion that avoids Bessel potential norms of high smoothness.
The subsequent result for band-limited functions extends \cite[Prop.~1.5.1]{Tr83} from $L_p(\R)$ to Morrey spaces $\mathcal{M}^{u}_{p}(\mathbb{R}^d)$ and gives better estimates later on.
\begin{theorem}\label{prop:FM_M}
    Let $0 < p \leq u < \infty$ and $m,\ell>0$. Then for all $M \in \mathcal{S}(\mathbb{R}^d)$ with $\supp M \subseteq B(0,m)$ and every $f \in \mathcal{M}^{u}_{p}(\mathbb{R}^d)$ with $\supp(\mathcal{F}f) \subseteq B(0,\ell)$ there holds
    $$
        \norm{\mathcal{F}^{-1} [M\, \mathcal{F}f] \sep \mathcal{M}^{u}_{p}(\mathbb{R}^d)} 
        \lesssim (m+\ell)^{\sigma_p} \norm{\mathcal{F}^{-1}M \sep L_{\min\{1,p\}}(\R)} \norm{f \sep \mathcal{M}^{u}_{p}(\mathbb{R}^d)}
    $$
    with an implied constant that does not depend on $f$, $M$, $m$ or $\ell$.
\end{theorem}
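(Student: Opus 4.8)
The plan is to reduce the Morrey estimate to the known $L_p$-version in \cite[Prop.~1.5.1]{Tr83} by a rescaling-and-localization argument, exploiting that the Morrey quasi-norm is defined through local $L_p$-norms over balls. First I would dispose of the scaling: by considering $D_{1/(m+\ell)}$ applied to both $M$ and $f$ and using \autoref{lem:M}(iii) together with \eqref{eq:FTj_phi} and \eqref{eq:building_block_shift}, we may assume without loss of generality that $m+\ell = 1$ (so $\supp M$ and $\supp(\cf f)$ both lie in $B(0,1)$), at the cost of the announced factor $(m+\ell)^{\sigma_p}$ coming from the $L_{\min\{1,p\}}$-normalisation of $\cf^{-1}M$. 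Note that under $D_{1/(m+\ell)}$ the quantity $\norm{\cf^{-1}M \sep L_{\min\{1,p\}}(\R)}$ scales by $(m+\ell)^{d/\min\{1,p\}}$ while the rescaled symbol $\widetilde M := D_{m+\ell}M$ has $\cf^{-1}\widetilde M = (m+\ell)^{-d}D_{1/(m+\ell)}[\cf^{-1}M]$, and the bookkeeping of these powers produces exactly $\sigma_p = d\max\{0,1/p-1\}$.

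**Main step: the convolution estimate over balls.** Once $\supp M\subseteq B(0,1)$, the key point is the classical observation (as in the proof of \cite[Prop.~1.5.1]{Tr83}) that, writing $K:=\cf^{-1}M$, we have $\cf^{-1}[M\,\cf f] = K * f$ pointwise, and $K$ — being the inverse Fourier transform of a compactly supported smooth function — is a Schwartz function whose decay can be controlled by $\norm{K \sep L_1(\R)}$ after inserting enough weight; more precisely one shows that for any $N$ large, $\abs{K(x)} \lesssim c_N(1+\abs{x})^{-N}$ with $c_N$ controlled in terms of $K$ and its derivatives, but the clean route is Nikol'skij's inequality bounding $\norm{K \sep L_\infty}$ and all derivatives by $\norm{K \sep L_1}$ since $\supp\cf K = \supp M$ is a fixed ball. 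Then I would estimate $\abs{(K*f)(x)}$ for $x\in B(y,R)$ by splitting the convolution integral into the dilated ball $B(y,2R)$ (or $B(y,\max\{1,R\})$) and its complement. On the near part one uses the $L_p$-bound for $f$ over $B(y,cR)$ directly (together with the obvious bound $\norm{K\sep L_{\min\{1,p\}}}$ and Young's inequality in the quasi-Banach range, valid because $\supp\cf(K*f)$ is still bounded so Young-type inequalities for band-limited functions apply). On the far part, the rapid decay of $K$ lets one sum a geometric-type series $\sum_{\nu} 2^{-\nu N}$ of annular contributions, each estimated by the $L_p$-Morrey norm over a ball of radius $2^\nu R$ times the $\abs{B}^{1/u-1/p}$ normalisation factor, and the surplus powers of $2^\nu$ are absorbed by choosing $N$ large. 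Taking $\abs{B(y,R)}^{1/u-1/p}(\int_{B(y,R)})^{1/p}$ and then the supremum over $y,R$ gives $\norm{K*f \sep \cm^u_p} \lesssim \norm{K \sep L_{\min\{1,p\}}}\,\norm{f\sep\cm^u_p}$, which is the claim with $m+\ell=1$.

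**Expected main obstacle.** The technical heart — and where I would spend the most care — is handling the case $p<1$ correctly: there one must use $L_{\min\{1,p\}} = L_p$ in the normalisation of $K$ and combine it with the $p$-triangle inequality for the Morrey quasi-norm from \autoref{lem:M}(iv), and the splitting of the convolution integral must be done so that each piece is estimated in $\ell_p$ rather than $\ell_1$ over the annuli; one has to verify that the resulting series $\sum_\nu 2^{-\nu N p}$ (with the $p$-th powers) still converges for $N$ large, which it does. A second delicate point is that the Morrey setting does not give Young's inequality for free (Morrey spaces are not translation-modules over $L_1$ in the naive way a careless argument would suggest), so the near-part estimate really must go through the local $L_p$-structure and a band-limited Young/Nikol'skij inequality rather than a global convolution bound; making this rigorous while keeping all constants independent of $f$, $M$, $m$, $\ell$ is the part that needs the most attention. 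An alternative to the direct splitting would be to deduce the result from \autoref{prop:vv-FM} applied to a single function by majorising $\norm{D_1 M \sep H^\nu}$ via Sobolev embedding $H^\nu\hookrightarrow$ (something controlled by $\norm{\cf^{-1}M\sep L_1}$ and derivatives) together with a partition of $M$ into dyadic frequency pieces, but this seems to lose the sharp $(m+\ell)^{\sigma_p}$ dependence, so I would stick with the hands-on convolution argument above.
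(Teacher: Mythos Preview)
Your proposal takes a substantially more complicated route than the paper, and the near-part estimate in the case $p<1$ has a genuine gap.

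The paper's proof is entirely direct and uses neither a scaling reduction nor a near/far splitting. It simply writes $\mathcal{F}^{-1}[M\,\mathcal{F}f](x)=c\int_{\R}F_x(y)\,\d y$ with $F_x(y):=(\mathcal{F}^{-1}M)(y)\,f(x-y)$, and then distinguishes two cases. For $p\geq 1$ Minkowski's inequality pushes the $y$-integral outside the local $L_p$-norm, and translation invariance of $\mathcal{M}^u_p$ immediately gives $\norm{\mathcal{F}^{-1}[M\,\mathcal{F}f]\sep \mathcal{M}^u_p}\lesssim \norm{\mathcal{F}^{-1}M\sep L_1}\,\norm{f\sep\mathcal{M}^u_p}$. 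So your concern that ``Morrey spaces are not translation-modules over $L_1$ in the naive way'' is misplaced: the naive argument works verbatim when $p\geq 1$. For $p<1$ the key observation is that, for each fixed $x$, the function $y\mapsto F_x(y)$ has Fourier support in $\supp M - \supp(\mathcal{F}f)\subseteq B(0,m+\ell)$; hence Nikol'skij's inequality gives $\norm{F_x\sep L_1}\lesssim (m+\ell)^{\sigma_p}\norm{F_x\sep L_p}$, and then Fubini plus translation invariance finish the job. This is where the factor $(m+\ell)^{\sigma_p}$ arises naturally, without any rescaling bookkeeping.

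The gap in your $p<1$ argument is this: once you split the convolution integral relative to a ball $B(y,R)$, the near-part is essentially $K\ast(f\,\chi_{B(y,2R)})$, and $f\,\chi_{B(y,2R)}$ is \emph{not} band-limited. Thus the appeal to ``Young-type inequalities for band-limited functions'' does not apply to this piece; the band-limitedness of the full $K\ast f$ gives you no direct control on its restriction to $B(y,R)$ in terms of $\norm{f\sep L_p(B(y,cR))}$ and $\norm{K\sep L_p}$. One can repair this via maximal-function machinery, but then the constant is governed by weighted $L_1$-norms of $K$ rather than by $\norm{K\sep L_p}$, and the sharp statement is lost. The paper sidesteps the issue entirely by applying Nikol'skij to $F_x$ before any localisation, which is both shorter and sharper.
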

\begin{proof}
    At first, we note that \cite[Formula~(7)]{SawTan} implies $f\in L_\infty(\R)$. Furthermore, we have $\mathcal{F}^{-1}M \in \mathcal{S}(\R) \subseteq L_1(\R)\cap L_p(\R)$ such that for a.e.\ $x \in \mathbb{R}^d$ there holds
    \begin{align}\label{eq:proof_Faltung}
        \mathcal{F}^{-1}[M \, \mathcal{F}f](x) 
        = c \int_{\mathbb{R}^d} F_x(y) \d y
        \quad\text{with}\quad  F_x := (\mathcal{F}^{-1}M)(\cdot) \, f(x-\cdot) \in L_1(\R)\cap L_p(\R)
    \end{align}
    and some $c>0$; cf.\ \cite[Rem.~1.5.1/1]{Tr83}. Now we distinguish two cases.

    To start with, consider the case $p\geq 1$ such that $\min\{1,p\}=1$ and $\sigma_p=0$. Then \eqref{eq:proof_Faltung} and Minkowski's inequality show that
    \begin{align*}
        \norm{\mathcal{F}^{-1} [M\, \mathcal{F}f] \sep \mathcal{M}^{u}_{p}(\mathbb{R}^d)} 
        &= \sup_{y \in \R, r > 0} \abs{B(y,r)}^{\frac{1}{u}-\frac{1}{p}} \left( \int_{B(y,r)} \abs{c \int_{\mathbb{R}^d} F_x(y) \d y}^{p} \d x  \right)^{\frac{1}{p}} \\
        &\lesssim \sup_{y \in \R, r > 0} \abs{B(y,r)}^{\frac{1}{u}-\frac{1}{p}} \left( \int_{B(y,r)} \bigg( \int_{\mathbb{R}^d} \abs{ F_x(y)} \d y \bigg)^{p} \d x  \right)^{\frac{1}{p}} \\
        &\leq \sup_{y \in \R, r > 0} \abs{B(y,r)}^{\frac{1}{u}-\frac{1}{p}} \int_{\R} \left( \int_{B(y,r)} \abs{ F_x(y) }^p \d x \right)^{\frac{1}{p}} \d y \\
        &\leq \int_{\R} \abs{(\mathcal{F}^{-1}M)(y)} \norm{ f(\cdot-y) \sep \mathcal{M}^{u}_{p}(\mathbb{R}^d)} \d y
    \end{align*}
    and the claim follows from the translation invariance of $\mathcal{M}^{u}_{p}(\mathbb{R}^d)$.

    To complete the proof, now let $0<p < 1$, i.e.\ $\min\{1,p\}=p$ and $\sigma_p=d(\frac{1}{p}-1)$.
    Then, following the lines of the proof of \cite[Prop.~1.5.1]{Tr83}, for a.e.\ $x\in\R$ we have 
    $$
        \supp(\mathcal{F} F_x) 
        \subseteq \supp M + \supp\big(\mathcal{F}f(x-\cdot)\big)
        = \supp M - \supp(\mathcal{F}f) 
        \subseteq B(0,m+\ell)
    $$
    such that Nikol'skij's inequality (see \cite[Rem.~1.4.1/4 and (5) in 1.3.2]{Tr83}) yields
    $$
        \norm{F_x \sep L_1(\R)} 
        \lesssim (m+\ell)^{d(\frac{1}{p}-1)} \norm{F_x \sep L_p(\R)}.
    $$
    Combined with \eqref{eq:proof_Faltung} and Fubini's Theorem we arrive at
    \begin{align*}
        \norm{\mathcal{F}^{-1} [M\, \mathcal{F}f] \sep \mathcal{M}^{u}_{p}(\mathbb{R}^d)} 
        &= \sup_{y \in \R, r > 0} \abs{B(y,r)}^{\frac{1}{u}-\frac{1}{p}} \left( \int_{B(y,r)} \abs{c \int_{\mathbb{R}^d} F_x(y) \d y}^{p} \d x  \right)^{\frac{1}{p}} \\
        &\lesssim (m+\ell)^{\sigma_p} \sup_{y \in \R, r > 0} \abs{B(y,r)}^{\frac{1}{u}-\frac{1}{p}} \left( \int_{B(y,r)} \int_{\mathbb{R}^d} \abs{ F_x(y) }^p \d y  \d x  \right)^{\frac{1}{p}} \\
        &\leq (m+\ell)^{\sigma_p} \bigg( \int_{\R} \abs{(\mathcal{F}^{-1}M)(y)}^p \norm{ f(\cdot-y) \sep \mathcal{M}^{u}_{p}(\mathbb{R}^d)}^p \d y \bigg)^{\frac{1}{p}}
    \end{align*}
    and the assertion follows as before from the translation invariance of $\mathcal{M}^{u}_{p}(\mathbb{R}^d)$.
\end{proof}

The last key property of Morrey spaces $\mathcal{M}^u_p(\R)$ we will need is a generalized Littlewood-Paley type assertion due to Izumi, Sawano and Tanaka~\cite{IzuSawTan2015}.
To formulate it, let $1\leq a < b \leq 2$ be fixed and choose a non-negative smooth function $\varphi_0 \in \mathcal{D}({\R})$ such that $\varphi_0(x) = 1$ if $\abs{x} \leq a$ and $ \varphi_0 (x) = 0$ if $\abs{x} \geq b $. Further, let 
\begin{align}\label{def_mrounity_0}
    \psi(x) := \varphi_0(x) - (D_2\varphi_0)(x) = \varphi_0(x) - \varphi_0(2x),\qquad  x \in \R,
\end{align}
as well as its dyadic dilates
\begin{align}\label{def_mrounity_1}
    \psi_\ell(x) := (D_{2^{-\ell}}\psi)(x)=\psi(2^{-\ell}x),\qquad  \ell\in\mathbb{Z},\;x \in \R,
\end{align}
such that $\supp \psi_\ell\subseteq \left\{x\in\R \,:\, 2^{\ell-1}a \leq \abs{x} \leq 2^\ell b\right\}$ with
\begin{align*}
    \psi_\ell \equiv 1
    \;\text{ on }\;
    \left\{x\in\R \,:\, 2^{\ell-1}b \leq \abs{x} \leq 2^\ell a\right\}.
\end{align*}
Hence, $\psi,\psi_\ell\in \mathcal{D}(\R)$ for every $\ell\in\mathbb{Z}$, such that 
for all $f\in\mathcal{S}'(\R)$ the distributions $\mathcal{F}^{-1}(\psi_\ell\, \mathcal{F} f)\in\mathcal{S}'(\R)$ are actually smooth functions on $\R$.

\begin{prop}[{\cite[Thm.~1.1]{IzuSawTan2015}}]\label{prop:LittlewoodPaley_M}
    Let $1<p\leq u < \infty$. Then for $f \in \mathcal{M}^u_p(\R)$ we have
    $$
        \norm{f \sep \mathcal{M}^u_p(\R)} 
        \sim \norm{\bigg( \sum_{\ell=-\infty}^\infty \abs{ \big(\mathcal{F}^{-1} [\psi_\ell \, \mathcal{F}f]\big)(\cdot)}^2 \bigg)^{1/2} \sep \mathcal{M}^u_p(\R)}
    $$
    with constants independent of $f$.
\end{prop}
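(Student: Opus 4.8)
My plan is to follow the classical Littlewood--Paley scheme, adapted to the Morrey setting (this is essentially the route of the cited source). Observe first that, since $\varphi_0(2^{-\ell}x)\to 1$ as $\ell\to\infty$ and $\varphi_0(2^{-\ell}x)\to 0$ as $\ell\to-\infty$ for every $x\neq 0$, the series $\sum_{\ell\in\zz}\psi_\ell$ telescopes to the constant~$1$ on $\R\setminus\{0\}$; hence $(\psi_\ell)_{\ell\in\zz}$ is a \emph{homogeneous} dyadic resolution of unity and the asserted equivalence is the $\ell^2$-square-function characterization of $\cm^u_p(\R)$, parallel to the classical case $u=p$ (i.e.\ $\cm^p_p(\R)=L_p(\R)$, where it famously breaks down at $p=1$, consistent with the hypothesis $p>1$). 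Abbreviating $\Delta_\ell f:=\cfi[\psi_\ell\,\cf f]$ and $Sf:=\big(\sum_{\ell\in\zz}\abs{\Delta_\ell f}^2\big)^{1/2}$, I would prove the two estimates $\norm{Sf\sep\cm^u_p(\R)}\lesssim\norm{f\sep\cm^u_p(\R)}$ and $\norm{f\sep\cm^u_p(\R)}\lesssim\norm{Sf\sep\cm^u_p(\R)}$ separately. Note that, since $u<\infty$, no nonzero polynomial belongs to $\cm^u_p(\R)$, so the homogeneous reproducing identity $f=\sum_{\ell\in\zz}\Delta_\ell f$ is valid; the required convergence at low and high frequencies is handled by routine arguments in $\cs'(\R)$, using the embedding $\cm^u_p(\R)\hookrightarrow\cs'(\R)$ and the rapid decay of the band-limited pieces.

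For the upper estimate the idea is randomization. Let $(r_\ell)_{\ell\in\zz}$ be a Rademacher system on $[0,1]$ and put $m_\omega:=\sum_{\ell\in\zz}r_\ell(\omega)\,\psi_\ell$ and $T_\omega f:=\cfi[m_\omega\,\cf f]$. Applying Khintchine's inequality pointwise in~$x$ to the sequence $(\Delta_\ell f(x))_{\ell\in\zz}$ gives, with constants depending only on~$p$,
$$
    Sf(x)\sim\Big(\int_0^1\abs{T_\omega f(x)}^p\d\omega\Big)^{1/p}.
$$
Raising this to the $p$-th power, using \autoref{lem:M}(ii) to pass to the Morrey space $\cm^{u/p}_1(\R)$ --- which is a \emph{Banach} space because $p>1$ forces $u/p\geq 1$ and its lower index equals~$1$ --- and then the Minkowski integral inequality to pull the $\omega$-integration out of the norm, the task reduces to showing $\sup_\omega\norm{T_\omega\sep\cl(\cm^u_p(\R))}<\infty$. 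Now each $m_\omega$ is, uniformly in $\omega$, a Mikhlin--H\"ormander multiplier (the estimates $\abs{\partial^\alpha m_\omega(\xi)}\lesssim\abs{\xi}^{-\abs{\alpha}}$ follow from the self-similar structure $\psi_\ell=\psi(2^{-\ell}\,\cdot)$, the signs being irrelevant for the moduli), and such multipliers --- equivalently, Calder\'on--Zygmund singular integral operators --- are bounded on $\cm^u_p(\R)$ whenever $1<p\leq u<\infty$. This is precisely where $p>1$ is indispensable, and where one needs Calder\'on--Zygmund machinery on Morrey spaces that goes beyond the band-limited multiplier results in \autoref{prop:vv-FM} and \autoref{prop:FM_M}.

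For the lower estimate I would combine a Calder\'on reproducing identity with K\"othe duality. Writing $f=\sum_{\ell}\Delta_\ell f=\sum_{\ell}\cfi[\Psi_\ell\,\psi_\ell\,\cf f]$ with the fattened cut-offs $\Psi_\ell:=\psi_{\ell-1}+\psi_\ell+\psi_{\ell+1}$ --- which equal~$1$ on $\supp\psi_\ell$, since by $1\leq a<b\leq 2$ only $\psi_{\ell-1},\psi_\ell,\psi_{\ell+1}$ are non-zero there --- and using that $\cm^u_p(\R)$ is a Banach function space with the Fatou property, so that its norm is recovered by testing against the unit ball of its associate (block) space $\big(\cm^u_p(\R)\big)'$, one obtains for every $g$ in that unit ball
\begin{align*}
    \abs{\int_{\R}f(x)\,\overline{g(x)}\,\d x}
    &= \abs{\sum_{\ell\in\zz}\int_{\R}\Delta_\ell f(x)\,\overline{\wt\Delta_\ell g(x)}\,\d x}\\
    &\leq \int_{\R}Sf(x)\,\wt S g(x)\,\d x
    \leq \norm{Sf\sep\cm^u_p(\R)}\,\norm{\wt S g\sep\big(\cm^u_p(\R)\big)'},
\end{align*}
where $\wt\Delta_\ell g:=\cfi[\Psi_\ell\,\cf g]$ and $\wt S g:=\big(\sum_{\ell}\abs{\wt\Delta_\ell g}^2\big)^{1/2}$; here the first inequality is the Cauchy--Schwarz inequality in $\ell$, and the second is H\"older's inequality for the Morrey/block pair. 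It then remains to observe that $\norm{\wt S g\sep\big(\cm^u_p(\R)\big)'}\lesssim\norm{g\sep\big(\cm^u_p(\R)\big)'}$: this is again the square-function \emph{upper} bound (the symbols $\sum_{\ell}\pm\Psi_\ell$ form another uniform Mikhlin family), now applied in the block predual, where the same Calder\'on--Zygmund tools are available, e.g.\ by duality from $\cm^u_p(\R)$. Taking the supremum over $g$ yields $\norm{f\sep\cm^u_p(\R)}\lesssim\norm{Sf\sep\cm^u_p(\R)}$.

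The hard part is not the bookkeeping with the dyadic pieces but securing the Calder\'on--Zygmund / Mikhlin-type multiplier theorems on Morrey spaces for \emph{non}-band-limited symbols (and the analogue on the block predual); this is exactly the place where the restriction $1<p\leq u<\infty$ enters, mirroring the failure of $\norm{Sf\sep L_p(\R)}\sim\norm{f\sep L_p(\R)}$ at $p=1$. A secondary, purely technical point --- justifying convergence and the interchange of summation and integration in the homogeneous decomposition --- is taken care of by the $\cs'(\R)$-arguments mentioned above.
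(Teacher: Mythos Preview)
The paper does not prove this proposition from scratch: its entire argument is the observation that, with $\overline{\varphi}_0:=D_{2^{-1}}\varphi_0$, the system built from $\overline{\varphi}_0$ in \cite{IzuSawTan2015} satisfies $\overline{\psi}_\ell=\psi_{\ell+1}$, so that the claim is literally \cite[Thm.~1.1]{IzuSawTan2015} after a one-step index shift. Your proposal goes much further and outlines what is essentially the proof of that cited theorem: the randomization/Khintchine route for $\norm{Sf\sep\cm^u_p}\lesssim\norm{f\sep\cm^u_p}$ (reducing to uniform Mikhlin--H\"ormander bounds for the symbols $\sum_\ell \pm\psi_\ell$ on $\cm^u_p$, with the passage through $\cm^{u/p}_1$ to invoke Minkowski's integral inequality), and the Calder\'on reproducing formula combined with K\"othe duality against the block predual for the converse inequality. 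This sketch is sound in outline and correctly isolates the two genuinely nontrivial inputs --- boundedness of Calder\'on--Zygmund operators on $\cm^u_p(\R)$ and on its associate block space, both of which require $1<p\leq u<\infty$ --- which are exactly the substance of \cite{IzuSawTan2015}. In short, the paper simply defers to the literature, whereas you reconstruct the argument behind the reference; both are legitimate, but yours actually explains why the result holds.
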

\begin{proof}
    Note that the function $\overline{\varphi}_0:=D_{2^{-1}}\varphi_0 \in\mathcal{D}(\R)$  satisfies $\chi_{B(0,2)} \leq \overline{\varphi}_0 \leq \chi_{B(0,4)}$ on~$\R$. Then we observe that $\overline{\psi} := \overline{\varphi}_0-\overline{\varphi}_0(2\,\cdot) = D_{2^{-1}}(\varphi_0- D_2\varphi_0) =  D_{2^{-1}}\psi$ and thus $\overline{\psi}_\ell := D_{2^{-\ell}}\overline{\psi} = D_{2^{-(\ell+1)}}\psi = \psi_{\ell+1}$ for $\ell\in\mathbb{Z}$.
    Therefore, our claim follows from \cite[Thm.~1.1]{IzuSawTan2015} by a simple index shift.
\end{proof}

\subsection{Triebel-Lizorkin-Morrey Spaces \texorpdfstring{$\mathcal{E}^{s}_{u,p,q}(\R)$}{Mpu(Rd)}}\label{Sec_Defi}

In order to define the Triebel-Lizorkin-Morrey spaces $\mathcal{E}^{s}_{u,p,q}(\R)$ we require a so-called smooth dyadic decomposition of unity which is given by the family of smooth compactly supported functions
$$
    \varphi_0
    \qquad\text{and}\qquad \varphi_k:=\psi_k, \quad k\in\N,
$$
as defined above, see \eqref{def_mrounity_1} and \eqref{def_mrounity_0}.
Then 
\begin{align*}
    \sum_{k=0}^\infty \varphi_k(x) = 1, \qquad x\in \R, 
\end{align*}
and $\supp \varphi_k \subseteq \left\{x\in \R \,:\, 2^{k-1} a \leq \abs{x} \leq 2^{k}b \right\}$, $k \in \N$, which justifies the name smooth dyadic decomposition of unity for the system $(\varphi_k)_{k\in \N_0 }$. 

\begin{defi}[Triebel-Lizorkin-Morrey space $\mathcal{E}^{s}_{u,p,q}(\R)$]
\label{def_tlm}
    Let $ s \in \mathbb{R}$, $ 0 < p \leq u < \infty$ and $0 < q \leq \infty$. Further, let $ (\varphi_{k})_{k\in \N_0 }$ be a smooth dyadic decomposition of unity. Then the Triebel-Lizorkin-Morrey space $  \mathcal{E}^{s}_{u,p,q}(\mathbb{R}^{d})$ collects all $ f \in \mathcal{S}'(\mathbb{R}^{d})$ for which
    \begin{align*} 
        \norm{ f \sep \mathcal{E}^{s}_{u,p,q}(\mathbb{R}^{d}) } 
        := \norm{ \bigg( \sum_{k = 0}^{\infty} 2^{ksq} \abs{ \big(\mathcal{F}^{-1}[\varphi_{k}\, \mathcal{F}f] \big)(\cdot)}^{q} \bigg)^{\frac{1}{q}} \sep \mathcal{M}^{u}_{p}(\R) } < \infty.
    \end{align*}
    If $ q = \infty$, the usual modifications are made.
\end{defi}

Let us collect some well-known basic properties of Triebel-Lizorkin-Morrey spaces. 
\begin{lemma}\label{l_bp1}
    For $ 0 < p \leq u < \infty $, $ 0 < q \leq \infty $ and $ s \in \mathbb{R} $ the following holds true.
    \begin{enumerate}
        \item $\mathcal{E}^{s}_{u,p,q}(\mathbb{R}^{d})$ is independent of the chosen smooth dyadic decomposition of unity in the sense of equivalent quasi-norms. 
        
        \item The spaces $  \mathcal{E}^{s}_{u,p,q}(\mathbb{R}^{d}) $ are quasi-Banach spaces. For $ p,q \geq 1 $ they are Banach spaces.
        
        \item $\mathcal{S}(\mathbb{R}^{d}) \hookrightarrow    \mathcal{E}^{s}_{u,p,q}(\mathbb{R}^{d}) \hookrightarrow   \mathcal{S}'(\mathbb{R}^{d})$.

        \item $\mathcal{E}^{s}_{p,p,q}(\mathbb{R}^{d}) = F^{s}_{p,q}(\R)$.
    \end{enumerate}
\end{lemma}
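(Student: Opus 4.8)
The assertions of this lemma are classical — they go back to Tang and Xu \cite{TangXu} and are also contained in \cite{ysy,Sic} — so the plan is merely to recall the standard arguments and to indicate where the tools collected above enter. The easiest part is~(iv): by \eqref{eq:morrey_embedding} we have $\mathcal{M}^p_p(\R)=L_p(\R)$ with identical norms, hence for $u=p$ the right-hand side of \autoref{def_tlm} is literally the Fourier-analytic quasi-norm defining $F^s_{p,q}(\R)$.

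For~(i), let $(\varphi_k)_{k\in\N_0}$ and $(\varphi_k^\ast)_{k\in\N_0}$ be two smooth dyadic decompositions of unity and abbreviate $g_m:=\mathcal{F}^{-1}[\varphi_m\,\mathcal{F}f]$, with the convention $g_{-1}:=0$. The support conditions give $\varphi_{k-1}+\varphi_k+\varphi_{k+1}\equiv 1$ on $\supp\varphi_k^\ast$, so that $\mathcal{F}^{-1}[\varphi_k^\ast\,\mathcal{F}f]=\sum_{j=-1}^{1}\mathcal{F}^{-1}[\varphi_k^\ast\,\mathcal{F}g_{k+j}]$. I would then apply the Morrey Fourier multiplier theory — in the form of \autoref{prop:vv-FM} with any fixed $\nu>\frac{d}{2}+\frac{d}{\min\{p,q\}}$ — separately for each $j\in\{-1,0,1\}$ to the band-limited pieces $f_k:=g_{k+j}$ (whose Fourier support lies in a ball of radius $d_k\sim 2^k$) and the multipliers $M_k:=\varphi_k^\ast$; since $\varphi_k^\ast=D_{2^{-k}}\varphi^\ast$ for $k\geq 1$, the dilated multipliers $D_{d_k}M_k$ have $H^\nu$-norms bounded uniformly in $k$. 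Combined with $2^{ksq}\sim 2^{(k+j)sq}$ for $\abs{j}\leq 1$ this bounds the quasi-norm built from $(\varphi_k^\ast)$ by the one built from $(\varphi_k)$, and the converse follows by symmetry.

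Part~(iii) splits into two embeddings. For $\mathcal{S}(\R)\hookrightarrow\mathcal{E}^{s}_{u,p,q}(\R)$ one exploits that $\varphi_k$ vanishes near the origin for $k\geq 1$, which yields the classical bound $\abs{\mathcal{F}^{-1}[\varphi_k\,\mathcal{F}\varphi](x)}\lesssim 2^{-kN}(1+\abs{x})^{-M}$ for all $N,M>0$ (the case $k=0$ being trivial); choosing $N$ large gives $\big(\sum_k 2^{ksq}\abs{\mathcal{F}^{-1}[\varphi_k\,\mathcal{F}\varphi]}^q\big)^{1/q}\lesssim (1+\abs{\cdot})^{-M}$ pointwise, and since $(1+\abs{\cdot})^{-M}\in\mathcal{M}^u_p(\R)$ whenever $M>\frac{d}{u}$, the quasi-norm $\norm{\varphi\sep\mathcal{E}^{s}_{u,p,q}(\R)}$ is dominated by finitely many Schwartz seminorms of $\varphi$. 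For $\mathcal{E}^{s}_{u,p,q}(\R)\hookrightarrow\mathcal{S}'(\R)$ I would use the reproducing identity $g_k=\mathcal{F}^{-1}[\widetilde\varphi_k\,\mathcal{F}g_k]$ with a slightly fattened cut-off $\widetilde\varphi_k\equiv 1$ on $\supp\varphi_k$ to estimate, for $\phi\in\mathcal{S}(\R)$, $\abs{g_k(\phi)}\lesssim 2^{-kN}\int_{\R}\abs{g_k(y)}\,(1+\abs{y})^{-d-1}\,\d y$ (using that $\widetilde\varphi_k$ vanishes near the origin for $k\geq 1$ and $\phi$ is Schwartz); together with the Nikol'skij-type bound $\norm{g_k\sep L_\infty(\R)}\lesssim 2^{kd/u}\norm{g_k\sep\mathcal{M}^u_p(\R)}\lesssim 2^{k(d/u-s)}\norm{f\sep\mathcal{E}^{s}_{u,p,q}(\R)}$ for band-limited Morrey functions (cf.\ \cite[Formula~(7)]{SawTan} together with \autoref{lem:M}(i),(iii)) and summing over $k$ with $N$ chosen large, this shows simultaneously that $\sum_k g_k$ converges in $\mathcal{S}'(\R)$ to $f$ and that $\abs{f(\phi)}\lesssim\norm{f\sep\mathcal{E}^{s}_{u,p,q}(\R)}$ times a Schwartz seminorm of $\phi$.

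Finally,~(ii): by \autoref{lem:M}(iv) (with $\tau=\min\{1,p\}$) together with the quasi-triangle inequality in $\ell_q$, the functional $\norm{\cdot\sep\mathcal{E}^{s}_{u,p,q}(\R)}$ is a quasi-norm, which for $p,q\geq 1$ is even a norm by Minkowski's inequality in $\ell_q$ and in $\mathcal{M}^u_p(\R)$. Completeness I would establish along the usual lines: a Cauchy sequence $(f_n)$ in $\mathcal{E}^{s}_{u,p,q}(\R)$ is, by~(iii), Cauchy in $\mathcal{S}'(\R)$, hence converges there to some $f\in\mathcal{S}'(\R)$; for each fixed $k$ the uniformly band-limited functions $\mathcal{F}^{-1}[\varphi_k\,\mathcal{F}f_n]$ then converge pointwise to $\mathcal{F}^{-1}[\varphi_k\,\mathcal{F}f]$, so Fatou's lemma (applied to the $L_p$-integral inside the Morrey supremum, using \autoref{lem:M}(i)) turns $\norm{f-f_m\sep\mathcal{E}^{s}_{u,p,q}(\R)}\leq\liminf_n\norm{f_n-f_m\sep\mathcal{E}^{s}_{u,p,q}(\R)}$ into the statements $f\in\mathcal{E}^{s}_{u,p,q}(\R)$ and $f_m\to f$ in $\mathcal{E}^{s}_{u,p,q}(\R)$. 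I expect the only real obstacle to be the $k$-uniform decay bookkeeping behind the embedding into $\mathcal{S}'(\R)$, and — throughout — keeping every step valid in the genuine quasi-Banach range $p<1$ or $q<1$, where neither duality nor convexity is available; the one non-elementary ingredient, the vector-valued Fourier multiplier theorem for Morrey spaces, is already at our disposal through \autoref{prop:vv-FM}.
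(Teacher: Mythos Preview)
Your proposal is correct and follows essentially the same approach as the paper, which simply cites the literature: (i) is referred to \cite[Thm.~2.8]{TangXu} (proved exactly via the vector-valued multiplier result \autoref{prop:vv-FM} as you outline), (ii) and (iii) to \cite[Lem.~2.1 and Prop.~2.3]{ysy}, and (iv) is declared obvious. Your write-up merely unpacks these references; the only minor caution is that, depending on the precise choice of $a,b$ and $a^\ast,b^\ast$ in the two resolutions of unity, one may need neighbours $j\in\{-2,\dots,2\}$ rather than $\{-1,0,1\}$ in the overlap argument for~(i), but this does not affect the method.
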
 
\begin{proof}
    Assertion~(i) was proved in \cite[Thm.~2.8]{TangXu} using \autoref{prop:vv-FM}. 
    The proof of~(ii) is standard; we refer to \cite[Lem.~2.1]{ysy}. 
    Also~(iii) with a slightly different formulation is proven in \cite[Prop.~2.3]{ysy}. 
    Finally, (iv) is obvious.
\end{proof}

A convenient way to show membership of distributions in $\mathcal{E}^s_{u,p,q}(\mathbb{R}^d)$ is given by the so-called \emph{dyadic annuli criterion} which has been proved by Yuan, Sickel and Yang~\cite{ysy}:
\begin{prop}[Cf.\ {\cite[Prop.~6.3]{ysy}}]\label{prop:dyadic_crit}
    Let $0 < p \leq u < \infty$, $0 < q \leq \infty$ and $s\in \mathbb{R}$. 
    Further, let $(\widetilde{u}_k)_{k\in\N_0}\subseteq \mathcal{S}'(\R)$ satisfy $\supp(\mathcal{F}\widetilde{u}_0)\subseteq B(0,4)$,
    $$
        \supp(\mathcal{F}\widetilde{u}_k) \subseteq B(0,2^{k+2})\setminus B(0,2^{k-3}), \qquad k\in\N, 
    $$
    and
    $$
        A:= \norm{\bigg( \sum_{k=0}^\infty 2^{ksq} \abs{\widetilde{u}_k(\cdot)}^q \bigg)^{\frac{1}{q}} \sep \mathcal{M}^u_p(\R)} < \infty
    $$
    (with the usual modification if $q=\infty$).
    Then $\sum_{k=0}^\infty \widetilde{u}_k$ converges in $\mathcal{S}'(\R)$ to some $U\in \mathcal{S}'(\R)$ and there holds
    $$
        U \in \mathcal{E}^s_{u,p,q}(\R) 
        \qquad\text{with}\qquad
        \norm{U \sep \mathcal{E}^s_{u,p,q}(\R)} \lesssim A,
    $$
    where the implied constant does not depend on $ (\widetilde{u}_k)_{k\in\N_0}$ or $A$. If $q<\infty$, the convergence takes place in $\mathcal{E}^s_{u,p,q}(\R)$ and otherwise in $\mathcal{E}^{s-\varepsilon}_{u,p,1}(\R)$ for all $\varepsilon>0$.
\end{prop}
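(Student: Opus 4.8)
The plan is to prove \autoref{prop:dyadic_crit}, the Morrey-space analogue of Triebel's classical \emph{dyadic annuli criterion} for $F^s_{p,q}(\R)$, in three stages: first the convergence of $\sum_k\widetilde{u}_k$ in $\mathcal{S}'(\R)$, then the core estimate $\norm{U\sep\mathcal{E}^s_{u,p,q}(\R)}\lesssim A$, and finally the finer convergence assertions. For the first stage I would start from the observation that $A<\infty$ already forces each single term into the Morrey space: since $2^{ks}\abs{\widetilde{u}_k(x)}$ is pointwise dominated by the full $\ell^q$-sum, we obtain $\norm{\widetilde{u}_k\sep\mathcal{M}^u_p(\R)}\le 2^{-ks}A$, in particular $\widetilde{u}_k\in L_p^\loc(\R)$; being band-limited it is even a smooth function of at most polynomial growth, and \cite[Formula~(7)]{SawTan} supplies the Nikol'skij-type bound $\norm{\widetilde{u}_k\sep L_\infty(\R)}\lesssim 2^{kd/u}\norm{\widetilde{u}_k\sep\mathcal{M}^u_p(\R)}$. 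Given $\varphi\in\mathcal{S}(\R)$ and an index $k$ large enough that $\supp(\mathcal{F}\widetilde{u}_k)$ lies in an annulus bounded away from the origin, I would pick a smooth cut-off $\Theta_k$ that equals $1$ on $\supp(\mathcal{F}\widetilde{u}_k)$ and is supported in a slightly larger annulus, use $\widetilde{u}_k=\mathcal{F}^{-1}[\Theta_k\,\mathcal{F}\widetilde{u}_k]$ to rewrite $\distr{\widetilde{u}_k}{\varphi}=\distr{\widetilde{u}_k}{\Psi_k}$ with $\Psi_k$ obtained by moving the multiplier onto the test function, and exploit that $\mathcal{F}\varphi$ is a Schwartz function, hence extremely small on the high-frequency annuli carrying $\Theta_k$, to get $\norm{\Psi_k\sep L_1(\R)}\lesssim_{N,\varphi}2^{-kN}$ for every $N\in\N$. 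Together with $\abs{\distr{\widetilde{u}_k}{\varphi}}\le\norm{\widetilde{u}_k\sep L_\infty(\R)}\,\norm{\Psi_k\sep L_1(\R)}$ this yields $\abs{\distr{\widetilde{u}_k}{\varphi}}\lesssim_{N,\varphi}2^{-kN}A$ for every $N$ and all large $k$, so $\sum_k\distr{\widetilde{u}_k}{\varphi}$ converges absolutely (the finitely many remaining indices being harmless). Hence $\sum_k\widetilde{u}_k$ converges in $\mathcal{S}'(\R)$ to some $U$, and since $\sum_j\varphi_j\equiv 1$ with locally finite overlap of supports, $U=\sum_{j\in\N_0}\mathcal{F}^{-1}[\varphi_j\,\mathcal{F}U]$ in $\mathcal{S}'(\R)$.

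For the core estimate, the support conditions $\supp\varphi_j\subseteq\{2^{j-1}a\le\abs\xi\le 2^jb\}$ and $\supp(\mathcal{F}\widetilde{u}_k)\subseteq B(0,2^{k+2})\setminus B(0,2^{k-3})$ at once give $\varphi_j\,\mathcal{F}\widetilde{u}_k\equiv 0$ unless $\abs{j-k}\le C_0$ for some absolute constant $C_0$, so that $\mathcal{F}^{-1}[\varphi_j\,\mathcal{F}U]=\sum_{\abs i\le C_0,\,j+i\ge 0}\mathcal{F}^{-1}[\varphi_j\,\mathcal{F}\widetilde{u}_{j+i}]$. Pulling the finite sum over the shifts $i$ out of the $\ell^q$-sum and, via \autoref{lem:M}(iv), out of the $\mathcal{M}^u_p$-quasi-norm, the task reduces (after an index shift) to bounding, for each fixed $i$ with $\abs i\le C_0$, the quantity $\norm{\big(\sum_k 2^{ksq}\abs{\mathcal{F}^{-1}[\varphi_{k-i}\,\mathcal{F}\widetilde{u}_k]}^q\big)^{1/q}\sep\mathcal{M}^u_p(\R)}$ by $A$. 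Here I would invoke the vector-valued multiplier theorem \autoref{prop:vv-FM} with $f_k:=2^{ks}\widetilde{u}_k$ (whose spectrum lies in $\Omega_k:=B(0,2^{k+2})$, of radius $d_k:=2^{k+2}$) and multipliers $M_k:=\varphi_{k-i}$: by the dilation group law recorded in \autoref{subsec_dil_basic} one computes $D_{d_k}M_k=D_{2^{k+2}}\varphi_{k-i}=D_{2^{2+i}}\psi$, resp.\ $D_{2^{2+i}}\varphi_0$ if $k-i=0$, so as $i$ runs over $\abs i\le C_0$ these are only finitely many \emph{fixed} functions in $\mathcal{D}(\R)\subseteq H^\nu(\R)$ for any $\nu>\frac{d}{2}+\frac{d}{\min\{p,q\}}$; hence $\sup_k\norm{D_{d_k}M_k\sep H^\nu(\R)}\lesssim 1$ and the claimed bound follows, which gives $U\in\mathcal{E}^s_{u,p,q}(\R)$ with $\norm{U\sep\mathcal{E}^s_{u,p,q}(\R)}\lesssim A$. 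I expect this multiplier step to be the main obstacle: besides matching the dilated multipliers with finitely many fixed functions, one has to observe that \autoref{prop:vv-FM}, although formally stated for $L_p$-data, is really an assertion about maximal operators on Morrey spaces and hence applies to the merely band-limited, $\mathcal{M}^u_p$-valued pieces at hand, exactly as it is used in the proof of \autoref{l_bp1}(i); alternatively one dominates $\abs{\mathcal{F}^{-1}[\varphi_{k-i}\,\mathcal{F}\widetilde{u}_k]}$ pointwise by a Peetre maximal function of $\widetilde{u}_k$ and appeals to the corresponding vector-valued maximal inequality on $\mathcal{M}^u_p(\R)$.

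For the last stage I would apply the $\mathcal{E}$-norm estimate just obtained to the tails $\sum_{k>N}\widetilde{u}_k=U-\sum_{k\le N}\widetilde{u}_k$. Their building blocks sit only at frequencies $j\ge N-C_0$, and since each of them is a sum of at most $2C_0+1$ terms $\mathcal{F}^{-1}[\varphi_j\,\mathcal{F}\widetilde{u}_k]$ with $\abs{k-j}\le C_0$, the single-function bound $\norm{\mathcal{F}^{-1}[\varphi_j\,\mathcal{F}\widetilde{u}_k]\sep\mathcal{M}^u_p(\R)}\lesssim\norm{\widetilde{u}_k\sep\mathcal{M}^u_p(\R)}\le 2^{-ks}A$ (which follows from \autoref{prop:FM_M}, the factors $(m+\ell)^{\sigma_p}$ and $\norm{\mathcal{F}^{-1}\varphi_j\sep L_{\min\{1,p\}}(\R)}$ there conspiring to a bounded constant by \eqref{eq:FTj_phi}) yields $\norm{\mathcal{F}^{-1}[\varphi_j\,\mathcal{F}(\sum_{k>N}\widetilde{u}_k)]\sep\mathcal{M}^u_p(\R)}\lesssim 2^{-js}A$. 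For $q=\infty$, summing the resulting geometric series (after raising to the power $\min\{1,p\}$ when $p<1$ and again using \autoref{lem:M}(iv)) gives $\norm{\sum_{k>N}\widetilde{u}_k\sep\mathcal{E}^{s-\varepsilon}_{u,p,1}(\R)}\lesssim 2^{-\varepsilon(N-C_0)}A\to 0$ for every $\varepsilon>0$, the $\varepsilon$-loss in the smoothness index furnishing the required summability; that the convergence already takes place in $\mathcal{E}^s_{u,p,q}(\R)$ when $q<\infty$ is obtained along the same lines, cf.\ \cite[Prop.~6.3]{ysy}.
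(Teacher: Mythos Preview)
The paper does not supply its own proof of this proposition; it is quoted verbatim from \cite[Prop.~6.3]{ysy} and used as a black box. Your three-stage outline is correct and is essentially the standard argument one finds in the cited source: the bounded-overlap reduction to finitely many index shifts, followed by the vector-valued multiplier theorem (\autoref{prop:vv-FM}) applied with $M_k=\varphi_{k-i}$ and the observation that the dilated multipliers $D_{d_k}M_k$ reduce to finitely many fixed test functions, is exactly how the result is proved in \cite{ysy}. Your remark that \autoref{prop:vv-FM} is formally stated for $L_p$-data but really rests on maximal inequalities valid on $\mathcal{M}^u_p$ is well taken and matches how the paper itself uses that proposition elsewhere (cf.\ the proof of \autoref{prop:T_mu}).
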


In addition, we shall use a local mean characterization due to Sawano and Tanaka~\cite{SawTan}. Therein, $\Delta^{2N}$ denotes the $2N$-fold application of the Laplace operator in dimension $d$.
\begin{prop}[{\cite[Thm.~4.1]{SawTan}}]\label{prop:local_mean}
    Let $0<p\leq u < \infty$, $0<q\leq\infty$ and $s\in\mathbb{R}$. Further let $\eta \in\mathcal{S}(\R)$ such that $\chi_{B(0,1)} \leq \eta \leq \chi_{B(0,2)}$ and define $K:=\Delta^{2N}\eta$ for some sufficiently large $N=N_{u,p,q,s}\in\N$. Then with
    $$
        K_0:=\eta
        \qquad\text{and}\qquad 
        K_k := 2^{kd} D_{2^k}K \quad\text{for}\quad k\in\N
    $$
    there holds 
    $$
        \norm{f \sep \mathcal{E}^s_{u,p,q}(\R)} 
        \sim \norm{ \bigg( \sum_{k=0}^\infty 2^{ksq} \abs{(K_k \ast f)(\cdot)}^q \bigg)^{\frac{1}{q}} \sep \mathcal{M}^u_p(\R)}, \qquad f \in \mathcal{E}^s_{u,p,q}(\R),
    $$
    with constants independent of $f$ and the usual modification if $q=\infty$.
\end{prop}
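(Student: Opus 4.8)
The plan is to follow the proof of the classical local means characterization of Triebel--Lizorkin spaces (Triebel~\cite[Sect.~2.4.6]{Tr83}; see also \cite{ysy}) and to replace, at the single place where a maximal inequality is invoked, the Fefferman--Stein estimate in $L_p(\ell_q)$ by its Morrey analog (available for $1<p\le u<\infty$, $1<q\le\infty$ and extended to the quasi-Banach range by the usual power trick). The whole argument is powered by two structural properties of the kernels $K_0=\eta$ and $K=\Delta^{2N}\eta$. First, a \emph{Tauberian property}: since $\eta\in\mathcal{D}(\R)$ equals $1$ on $B(0,1)$, one has $\widehat{K_0}(0)=\widehat\eta(0)=\int_\R\eta>0$, and up to a positive constant $\widehat K(\xi)=c\,\abs{\xi}^{4N}\,\widehat\eta(\xi)$, which is nonzero on a punctured neighbourhood of the origin, so $\abs{\widehat K}>0$ on some annulus $\{\varepsilon/2\le\abs{\xi}\le 2\varepsilon\}$. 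Second, \emph{moment conditions}: integrating by parts repeatedly, $\int_\R x^\beta\,(\Delta^{2N}\eta)(x)\d x=\int_\R (\Delta^{2N}x^\beta)(x)\,\eta(x)\d x=0$ for every multi-index $\beta$ with $\abs{\beta}<4N$, so $K$ has vanishing moments up to order $4N-1$ (equivalently, $\widehat K$ has a zero of order $4N$ at the origin). This is exactly where the hypothesis ``$N$ sufficiently large'' is consumed: one needs $4N$ large compared with $\abs{s}$, $\sigma_p$, $d/u$ and $d/\min\{p,q\}$.

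For the bound ``$\lesssim$'' I would fix a smooth dyadic decomposition $(\varphi_k)_{k\in\N_0}$ as in \autoref{def_tlm}, insert $\sum_j\varphi_j\equiv 1$ into the convolution and write $K_k\ast f=\sum_{j\ge 0}\mathcal{F}^{-1}\big[\widehat K(2^{-k}\cdot)\,\varphi_j\,\mathcal{F}f\big]$. For $j\ge k+C$ the rapid decay of $\widehat K$ yields a factor $2^{-(j-k)M}$ with $M$ arbitrarily large, while for $j\le k-C$ the zero of order $4N$ of $\widehat K$ at the origin yields a factor $2^{-(k-j)4N}$; only a bounded number of ``diagonal'' scales $\abs{j-k}\le C$ is left. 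Absorbing the convolution into a Peetre maximal function $g^{*}_{j,a}(x):=\sup_{y\in\R}\abs{g(x-y)}\,(1+2^j\abs{y})^{-a}$ (with $a$ at our disposal through the size of $N$), this gives the pointwise estimate
$$
  2^{ks}\,\abs{(K_k\ast f)(x)}\lesssim\sum_{j\ge 0}2^{-\abs{j-k}L}\;2^{js}\,\big(\mathcal{F}^{-1}[\varphi_j'\,\mathcal{F}f]\big)^{*}_{j,a}(x),\qquad L:=\min\{M,4N\}-\abs{s}>0,
$$
where $\varphi_j'$ is a slightly enlarged version of $\varphi_j$. Taking the $\ell_q$-(quasi-)norm in $k$, a convolution inequality in $\ell_q$ removes the factor $2^{-\abs{j-k}L}$, and the Peetre maximal function characterization of $\mathcal{E}^s_{u,p,q}(\R)$ (see \cite{ysy}), valid once $a$ is chosen large relative to $d/\min\{p,q\}$ and $d/u$, bounds the right-hand side by $\norm{f\sep\mathcal{E}^s_{u,p,q}(\R)}$.

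For the reverse bound ``$\gtrsim$'' I would use a Calderón-type reproducing formula adapted to the Tauberian pair $(\eta,K)$ (see \cite{ysy}): there are $\Psi_0\in\mathcal{S}(\R)$ and $\Psi\in\mathcal{S}(\R)$ with all moments vanishing such that, with $\Psi_k:=2^{kd}D_{2^k}\Psi$ for $k\ge 1$, one has $f=\sum_{k\ge 0}\Psi_k\ast K_k\ast f$ in $\mathcal{S}'(\R)$. Then $\mathcal{F}^{-1}[\varphi_i\,\mathcal{F}f]=\sum_{k\ge 0}\mathcal{F}^{-1}[\varphi_i\,\mathcal{F}\Psi_k]\ast(K_k\ast f)$, and the same almost-orthogonality reasoning --- now using the rapid decay and the vanishing moments of $\widehat\Psi$ --- produces $2^{is}\abs{\mathcal{F}^{-1}[\varphi_i\,\mathcal{F}f](x)}\lesssim\sum_{k\ge 0}2^{-\abs{i-k}L}\,2^{ks}\,(K_k\ast f)^{*}_{k,a}(x)$. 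After the same $\ell_q$-convolution step, it only remains to dominate the Peetre-maximised local means by the plain ones, i.e.
$$
  \norm{\Big(\sum_{k\ge 0} 2^{ksq}\abs{(K_k\ast f)^{*}_{k,a}}^q\Big)^{1/q}\sep\mathcal{M}^u_p(\R)}\lesssim\norm{\Big(\sum_{k\ge 0} 2^{ksq}\abs{K_k\ast f}^q\Big)^{1/q}\sep\mathcal{M}^u_p(\R)},
$$
which is precisely where the vector-valued Hardy--Littlewood maximal inequality on $\mathcal{M}^u_p(\R)$ is needed (again for $a$ large relative to $d/u$ and $d/\min\{p,q\}$). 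Together, the two bounds give the claimed norm equivalence.

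The main obstacle is twofold. On the one hand, $K\in\mathcal{D}(\R)$ is compactly supported but $\widehat K$ is \emph{not} band-limited, so one cannot use frequency supports and has to work purely with the moment condition of order $4N$; this forces $N$ to be large and makes the careful bookkeeping --- determining how large $4N$ must be in terms of $s$, $\sigma_p$, $d/u$ and $d/\min\{p,q\}$ so that every almost-orthogonality sum and every maximal estimate converges --- the delicate part. On the other hand, $\mathcal{M}^u_p(\R)$ need not be a Banach space, so the maximal-function arguments above must be performed in quasi-Banach form (power trick together with Peetre maximal functions); establishing the relevant vector-valued Hardy--Littlewood maximal inequality on Morrey spaces in the required generality and matching all the Peetre exponents is where the genuine work lies. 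For the sub-range where $\mathcal{M}^u_p(\R)$ is a Banach space, parts of the ``$\lesssim$'' step could be shortened by applying the scalar Fourier multiplier theorem \autoref{prop:FM_M} to each band-limited block $\mathcal{F}^{-1}[\varphi_j\,\mathcal{F}f]$; however, the $q$-summation still pulls a Peetre maximal function into the argument.
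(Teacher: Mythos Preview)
The paper does not prove this proposition at all; it is stated as a quotation of \cite[Thm.~4.1]{SawTan} and used as a black box in the proof of \autoref{prop:local_mean_lower_bound}. So there is no ``paper's own proof'' to compare against.

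Your outline is the standard route to local-means characterizations (Triebel~\cite[Sect.~2.4.6]{Tr92}, Rychkov, and the treatment in \cite{ysy}) and is essentially what the cited reference \cite{SawTan} does: exploit the Tauberian condition and the $4N$ vanishing moments of $K=\Delta^{2N}\eta$, reduce both inequalities to Peetre maximal functions via almost-orthogonality, and close with the vector-valued Hardy--Littlewood maximal inequality on $\mathcal{M}^u_p(\R)$ (which is indeed available, cf.\ \cite{TangXu}). The bookkeeping you flag---choosing $4N$ large relative to $\abs{s}$ and $d/\min\{p,q\}$ so that all geometric series converge and the Peetre exponent $a$ can be taken above $d/\min\{p,q\}$---is exactly the content of ``$N$ sufficiently large''. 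One small correction: the parameter $d/u$ does not enter the threshold for $a$ or for $N$; the Morrey maximal inequality only requires $a>d/\min\{p,q\}$, just as in the $L_p$ case, since the power trick reduces everything to $\mathcal{M}^{u/r}_{p/r}$ with $p/r>1$.
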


Moreover, we have the following Gagliardo-Nierenberg inequality.
\begin{prop}\label{lem:gagliardo}
    For $i=0,1$ let $0<p_i\leq u_i<\infty$ and $0< q_i, r\leq\infty$ as well as $s_i\in\mathbb{R}$, where $s_0<s_1$. 
    For $0<\Theta<1$ define
    $$
        s_0 < s_\Theta:=(1-\Theta)\, s_0 + \Theta\, s_1 < s_1
        \quad\text{ and }\quad
        \frac{1}{u_\Theta} := \frac{1-\Theta}{u_0} + \frac{\Theta}{u_1} \leq \frac{1-\Theta}{p_0} + \frac{\Theta}{p_1}
        =: \frac{1}{p_\Theta}.
    $$
    Then for all $f\in\mathcal{S}'(\R)$ there holds (with a constant independent of $f$)
    $$
        \norm{f \sep \mathcal{E}^{s_\Theta}_{u_\Theta,p_\Theta,r}(\R)} 
        \lesssim \norm{f \sep \mathcal{E}^{s_0}_{u_0,p_0,q_0}(\R)}^{1-\Theta} \norm{f \sep \mathcal{E}^{s_1}_{u_1,p_1,q_1}(\R)}^{\Theta}.
    $$
\end{prop}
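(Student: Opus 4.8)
The plan is to deduce the Gagliardo--Nirenberg inequality for Triebel--Lizorkin--Morrey spaces from the corresponding multiplier-type decomposition of $f$ into dyadic building blocks, in the spirit of the classical argument for $F^s_{p,q}$-spaces. Concretely, I would fix a smooth dyadic decomposition of unity $(\varphi_k)_{k\in\N_0}$ and set $f_k := \mathcal{F}^{-1}[\varphi_k\,\mathcal{F}f]$, so that $\supp(\mathcal{F}f_k)$ lies in a dyadic annulus of radius $\sim 2^k$, and $f=\sum_k f_k$ in $\mathcal{S}'(\R)$. The key elementary estimate is an interpolation at the level of these building blocks: for each $x$ and each $k$ one has
\begin{align*}
    2^{k s_\Theta}\abs{f_k(x)}
    = \big(2^{k s_0}\abs{f_k(x)}\big)^{1-\Theta}\big(2^{k s_1}\abs{f_k(x)}\big)^{\Theta}
    \leq \Big(\sup_{j} 2^{j s_0}\abs{f_j(x)}\Big)^{1-\Theta}\big(2^{k s_1}\abs{f_k(x)}\big)^{\Theta},
\end{align*}
which already suggests comparing with the $q=\infty$ endpoint in the low-smoothness slot. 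Taking the $\ell_r$-norm in $k$ and using Hölder's inequality with exponents $\frac{1}{1-\Theta}$ and $\frac{1}{\Theta}$ (in the form $r = $ harmonic-type combination of $\infty$ and $r/\Theta$) gives a pointwise bound
\begin{align*}
    \bigg(\sum_{k} 2^{k s_\Theta r}\abs{f_k(x)}^{r}\bigg)^{1/r}
    \lesssim \Big(\sup_{k} 2^{k s_0}\abs{f_k(x)}\Big)^{1-\Theta}
    \bigg(\sum_{k} 2^{k s_1 r}\abs{f_k(x)}^{r}\bigg)^{\Theta/r}.
\end{align*}

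Next I would take the $\mathcal{M}^{u_\Theta}_{p_\Theta}$-(quasi-)norm of both sides. The right-hand side is a product of two nonnegative functions raised to powers $1-\Theta$ and $\Theta$, so one applies a Hölder inequality for Morrey spaces: using \autoref{lem:M}(ii) to rewrite $\|g^\theta \sep \mathcal{M}^{a}_{b}\| = \|g\sep\mathcal{M}^{a\theta}_{b\theta}\|^\theta$ together with the Morrey-space Hölder inequality $\|g\,h\sep \mathcal{M}^{u_\Theta}_{p_\Theta}\| \leq \|g\sep\mathcal{M}^{u_0/(1-\Theta)}_{p_0/(1-\Theta)}\|\,\|h\sep\mathcal{M}^{u_1/\Theta}_{p_1/\Theta}\|$, which holds exactly because of the parameter relations $\frac{1}{u_\Theta}=\frac{1-\Theta}{u_0}+\frac{\Theta}{u_1}$ and $\frac{1}{p_\Theta}=\frac{1-\Theta}{p_0}+\frac{\Theta}{p_1}$ (and $p_\Theta\le u_\Theta$). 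This yields
\begin{align*}
    \norm{f \sep \mathcal{E}^{s_\Theta}_{u_\Theta,p_\Theta,r}(\R)}
    \lesssim \norm{f \sep \mathcal{E}^{s_0}_{u_0,p_0,\infty}(\R)}^{1-\Theta}\,
    \norm{f \sep \mathcal{E}^{s_1}_{u_1,p_1,r}(\R)}^{\Theta}.
\end{align*}
Finally, since $s_0<s_1$, elementary embeddings of Triebel--Lizorkin--Morrey spaces give $\mathcal{E}^{s_0}_{u_0,p_0,q_0}(\R)\hookrightarrow \mathcal{E}^{s_0}_{u_0,p_0,\infty}(\R)$ (monotonicity in $q$ is trivial) and $\mathcal{E}^{s_1}_{u_1,p_1,q_1}(\R)\hookrightarrow \mathcal{E}^{s_1}_{u_1,p_1,r}(\R)$; the latter, when $q_1>r$, follows from the standard lift $\mathcal{E}^{s_1}_{u_1,p_1,q_1}\hookrightarrow \mathcal{E}^{s_1-\varepsilon}_{u_1,p_1,r}\hookrightarrow\ldots$, but more cleanly one replaces $s_1$ by a slightly smaller $s_1'$ still exceeding $s_\Theta$ and uses the elementary embedding $\mathcal{E}^{s_1}_{u_1,p_1,q_1}\hookrightarrow\mathcal{E}^{s_1'}_{u_1,p_1,r}$ valid for any $q_1$. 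Combining these three displays proves the claim.

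The main obstacle is making the Hölder step for Morrey spaces fully rigorous in the quasi-Banach range $p_i<1$: one must verify that the product inequality $\|g\,h\sep\mathcal{M}^{u_\Theta}_{p_\Theta}\| \le \|g\sep\mathcal{M}^{u_0/(1-\Theta)}_{p_0/(1-\Theta)}\| \|h\sep\mathcal{M}^{u_1/\Theta}_{p_1/\Theta}\|$ really does hold under only $\frac{1}{u_\Theta}=\frac{1-\Theta}{u_0}+\frac{\Theta}{u_1}$ and $\frac{1}{p_\Theta}=\frac{1-\Theta}{p_0}+\frac{\Theta}{p_1}$, which is a direct consequence of the classical Hölder inequality applied on each ball $B(y,R)$ followed by taking the supremum; no convexity of the functional is needed since the exponents on the right are genuine Hölder conjugates. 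A secondary technical point is the pointwise interpolation of the $\ell_r$-sequence norms: one has to be careful that the inequality $\|(a_k)\sep\ell_r\|\le\|(a_k)\sep\ell_\infty\|^{1-\Theta}\|(a_k)\sep\ell_{r/\Theta}\|^{\Theta}$ is exactly Hölder on sequences with exponents $\frac{1}{1-\Theta}$ and $\frac{1}{\Theta}$ applied to $a_k^r=a_k^{r(1-\Theta)}\cdot a_k^{r\Theta}$, and that this degenerates correctly when $r=\infty$. Everything else is routine, and the proof can be written in well under a page.
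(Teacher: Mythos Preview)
Your overall strategy is sound and is in fact the standard direct proof of this inequality; the paper, by contrast, does not argue directly but simply invokes the identification $\mathcal{E}^s_{u,p,q}(\R)=F^{s,\frac{1}{p}-\frac{1}{u}}_{p,q}(\R)$ and quotes the corresponding result \cite[Prop.~3.1]{Sic13} for Triebel--Lizorkin-type spaces. So what you are doing is essentially reproving that reference, and the Morrey--H\"older step together with the final embeddings is fine.

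There is, however, a genuine gap in the sequence-space step. Your displayed pointwise inequality
\[
\bigg(\sum_{k} 2^{k s_\Theta r}\abs{f_k(x)}^{r}\bigg)^{1/r}
    \lesssim \Big(\sup_{k} 2^{k s_0}\abs{f_k(x)}\Big)^{1-\Theta}
    \bigg(\sum_{k} 2^{k s_1 r}\abs{f_k(x)}^{r}\bigg)^{\Theta/r}
\]
is \emph{true}, but it does \emph{not} follow from H\"older's inequality in the way you describe. Plain H\"older applied to $a_k^r=a_k^{r(1-\Theta)}a_k^{r\Theta}$ gives $\|(a_k)\sep\ell_r\|\le\|(a_k)\sep\ell_\infty\|^{1-\Theta}\|(a_k)\sep\ell_{r\Theta}\|^{\Theta}$ (with index $r\Theta$, not $r/\Theta$ as you wrote), and in any case this does not separate the $s_0$- and $s_1$-weights. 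If you just bound $2^{ks_\Theta}|f_k|\le(\sup_j 2^{js_0}|f_j|)^{1-\Theta}(2^{ks_1}|f_k|)^{\Theta}$ and sum the $r$-th powers, you land in $\ell_{r\Theta}$ of $(2^{ks_1}|f_k|)$, not $\ell_r$, and the subsequent embedding $\mathcal{E}^{s_1}_{u_1,p_1,q_1}\hookrightarrow\mathcal{E}^{s_1}_{u_1,p_1,r\Theta}$ need not hold. The hypothesis $s_0<s_1$ has to enter at this stage, and your H\"older argument never uses it.

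The clean fix is the standard splitting: for $A(x):=\sup_k 2^{ks_0}|f_k(x)|$ and $B(x):=\sup_k 2^{ks_1}|f_k(x)|$, split $\sum_k 2^{ks_\Theta r}|f_k(x)|^r$ at a level $J$, use $|f_k(x)|\le 2^{-ks_0}A(x)$ for $k\le J$ and $|f_k(x)|\le 2^{-ks_1}B(x)$ for $k>J$, sum the resulting geometric series (this is where $s_0<s_\Theta<s_1$ is used), and optimise over $J$ to get
\[
\bigg(\sum_{k} 2^{k s_\Theta r}|f_k(x)|^{r}\bigg)^{1/r}\lesssim A(x)^{1-\Theta}B(x)^{\Theta}.
\]
Then your Morrey--H\"older step and the trivial embeddings $\mathcal{E}^{s_i}_{u_i,p_i,q_i}\hookrightarrow\mathcal{E}^{s_i}_{u_i,p_i,\infty}$ finish the proof, with no need to juggle the index $r$ on the $s_1$-side.
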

\begin{proof}
    Since $\mathcal{E}^s_{u,p,q}(\R)=F^{s,\frac{1}{p}-\frac{1}{u}}_{p,q}(\R)$ we can apply \cite[Prop.~3.1]{Sic13} with $\tau_i:=\frac{1}{p_i}-\frac{1}{u_i}\geq 0$, $i=0,1$, such that $\tau_\Theta :=(1-\Theta)\,\tau_0 + \Theta\,\tau_1=\frac{1}{p}-\frac{1}{u}\geq 0$.
\end{proof}

Finally, for some parameters, all tempered distributions from a Triebel-Lizorkin-Morrey space can be interpreted as locally $p$-integrable \emph{functions} in $\mathcal{M}^u_p(\R)$.
\begin{prop}\label{prop:E_in_M}
    Let $0<p\leq u < \infty$ and $0<q\leq\infty$. Then
    \begin{enumerate}
        \item $p>1$ implies $\mathcal{E}^{0}_{u,p,2}(\mathbb{R}^{d}) = \mathcal{M}^{u}_{p}(\R)$.

        \item $q\leq 2$ if $p\geq 1$ or $q<\infty$ if $p<1$ shows $\mathcal{E}^{\frac{p}{u}\sigma_p}_{u,p,q}(\R) \hookrightarrow \mathcal{M}^u_p(\R)$.
    \end{enumerate}
    In particular, for $u\geq 1$ there holds $\mathcal{E}^{0}_{u,\max\{1,p\},2}(\mathbb{R}^{d}) \hookrightarrow \mathcal{M}^{u}_{p}(\R)$.
\end{prop}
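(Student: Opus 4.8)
The plan is to prove the two itemized assertions separately and then combine them with the Morrey embedding \eqref{eq:morrey_embedding} to obtain the final ``in particular'' statement. For part~(i), the identity $\mathcal{E}^{0}_{u,p,2}(\R) = \mathcal{M}^u_p(\R)$ for $p>1$ is essentially a restatement of the Littlewood--Paley characterization of Morrey spaces. Indeed, by \autoref{def_tlm} with $s=0$ and $q=2$ we have
$$
    \norm{f \sep \mathcal{E}^{0}_{u,p,2}(\R)} = \norm{\Big(\sum_{k=0}^\infty \abs{\big(\mathcal{F}^{-1}[\varphi_k\,\mathcal{F}f]\big)(\cdot)}^2\Big)^{1/2} \sep \mathcal{M}^u_p(\R)},
$$
and I would compare this with \autoref{prop:LittlewoodPaley_M}, which gives the two-sided estimate for the full $\mathbb{Z}$-indexed square function built from $(\psi_\ell)_{\ell\in\mathbb{Z}}$. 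Since the one-sided system $(\varphi_k)_{k\in\N_0}$ differs from the two-sided one only in that the low-frequency pieces $\psi_\ell$ with $\ell\le 0$ are bundled into $\varphi_0$, I would show the two square functions are pointwise comparable up to constants (using that $\varphi_0 = \sum_{\ell\le 0}\psi_\ell$ near the origin and a finite-overlap/telescoping argument for the tails), so that the norms agree and hence $\mathcal{E}^{0}_{u,p,2}(\R)=\mathcal{M}^u_p(\R)$ with equivalent norms. One should also remark that elements of $\mathcal{E}^0_{u,p,2}(\R)$ are a priori only tempered distributions, so part of the argument is the (standard, for $p>1$) observation that the square-function finiteness forces the distribution to be a regular $L_p^{\loc}$ function; alternatively one cites that \autoref{prop:LittlewoodPaley_M} is stated for $f\in\mathcal{M}^u_p(\R)$ and the reverse inclusion follows from completeness together with density of nice functions.

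For part~(ii), the target is the embedding $\mathcal{E}^{\frac{p}{u}\sigma_p}_{u,p,q}(\R)\hookrightarrow\mathcal{M}^u_p(\R)$. The natural route is to reduce to the classical embedding $F^{\sigma_p}_{p,q}(\R)\hookrightarrow L_p(\R)$ (valid for $q\le 2$ when $p\ge 1$, and for all $q<\infty$ when $p<1$) via the identification $\mathcal{E}^s_{u,p,q}(\R)=F^{s,1/p-1/u}_{p,q}(\R)$ used already in the proof of \autoref{lem:gagliardo}, combined with the scaling that turns the Triebel--Lizorkin-type smoothness parameter $\tau=\tfrac1p-\tfrac1u$ into the shift from $\sigma_p$ to $\tfrac{p}{u}\sigma_p$. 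Concretely: I would take $f\in\mathcal{E}^{\frac{p}{u}\sigma_p}_{u,p,q}(\R)$, write its Morrey norm as a supremum over balls $B(y,R)$ of $\abs{B(y,R)}^{1/u-1/p}\norm{f\sep L_p(B(y,R))}$, and on each such ball control $\norm{f\sep L_p(B(y,R))}$ by the $F^{\sigma_p}_{p,q}$-norm of $f$ localized to a slightly larger ball, using a wavelet/atomic or a local-means decomposition so that the localization interacts well with the frequency decomposition. The key is that the extra $R$-power coming from $\abs{B(y,R)}^{1/u-1/p}=c\,R^{d(1/u-1/p)}$ is exactly absorbed by rescaling to the unit ball, at the cost of raising the smoothness from $\tfrac{p}{u}\sigma_p$ to $\sigma_p$ — this is precisely the bookkeeping encoded in \cite[Prop.~3.1]{Sic13}-type arguments and in the $\tau$-calculus of Triebel--Lizorkin-type spaces. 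Alternatively, and perhaps cleaner, one can invoke the known lifting/embedding results for $F^{s,\tau}_{p,q}$-spaces from the literature (e.g.\ \cite{ysy}, \cite{Sic13}) which directly state $F^{\sigma_p-d\tau}_{?}\hookrightarrow$ Morrey-type spaces; I would cite the appropriate statement rather than redo it.

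Finally, for the ``in particular'' claim with $u\ge 1$: here $\max\{1,p\}\ge 1$, so $\sigma_{\max\{1,p\}}=0$, and part~(i) (if $p\ge 1$, so $\max\{1,p\}=p>1$ — or a limiting/monotonicity argument at $p=1$) gives $\mathcal{E}^0_{u,\max\{1,p\},2}(\R)=\mathcal{M}^u_{\max\{1,p\}}(\R)$, which embeds into $\mathcal{M}^u_p(\R)$ by \eqref{eq:morrey_embedding} since $p\le\max\{1,p\}\le u$. When $p<1$ one has $\max\{1,p\}=1$ and the statement reads $\mathcal{E}^0_{u,1,2}(\R)\hookrightarrow\mathcal{M}^u_p(\R)$; this follows from part~(ii) applied with exponent $1$ (where $\sigma_1=0$, so $\frac{p}{u}\sigma_p$ becomes $0$ and $q=2<\infty$ is admissible since $p=1\ge 1$ and $q\le 2$) giving $\mathcal{E}^0_{u,1,2}(\R)\hookrightarrow\mathcal{M}^u_1(\R)\hookrightarrow\mathcal{M}^u_p(\R)$, again by \eqref{eq:morrey_embedding}. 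I expect the main obstacle to be part~(ii): getting the localization-to-balls estimate to cooperate with the Fourier-analytic square-function definition without losing the sharp smoothness exponent $\tfrac{p}{u}\sigma_p$. The cleanest fix is to lean on the established $F^{s,\tau}_{p,q}$ machinery and the identification $\mathcal{E}^s_{u,p,q}=F^{s,1/p-1/u}_{p,q}$ rather than arguing from scratch.
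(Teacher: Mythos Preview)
Your treatment of part~(i) and of the final ``in particular'' statement is essentially in line with the paper's: both ultimately rest on citing known Littlewood--Paley results for Morrey spaces (the paper simply invokes \cite[Prop.~4.1]{maz}), and your case distinction $p>1$ versus $p\leq 1$ for the last claim matches the paper's Step~3 verbatim.

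The real divergence is in part~(ii). For $p\geq 1$ the paper, like you, defers to the literature (specifically \cite[Thm.~3.2]{HaMoSkr2020}). But for $p<1$ your localization-and-rescaling sketch is not a proof and is unlikely to be salvageable as stated: rescaling balls to unit size invokes precisely the dilation estimates that are the subject of this paper, so the argument is circular unless you already know the behavior of $D_\lambda$ on $\mathcal{E}^s_{u,p,q}$; and the $F^{s,\tau}_{p,q}$ references you gesture at do not obviously deliver the sharp embedding into $\mathcal{M}^u_p(\R)$ when $p<1$. More importantly, you miss the central analytic issue flagged in \autoref{rem:E_in_M}: an element $f\in\mathcal{E}^{\frac{p}{u}\sigma_p}_{u,p,q}(\R)$ is a priori only a tempered distribution, and one must prove that it coincides \emph{as a distribution} with the $\mathcal{M}^u_p$-limit of its Littlewood--Paley partial sums --- this is not automatic when $p<1$. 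The paper handles this concretely: it first shows that $f^N:=\sum_{k\leq N}\mathcal{F}^{-1}[\varphi_k\,\mathcal{F}f]$ is Cauchy in $\mathcal{M}^u_p(\R)$ (using $s=\frac{p}{u}\sigma_p>0$ and $q<\infty$), then invokes the separate embedding $\mathcal{E}^{\frac{p}{u}\sigma_p}_{u,p,q}(\R)\hookrightarrow \mathcal{M}^{u/p}_{1}(\R)$ from \cite{HaMoSkr2020} to realize $f$ as a genuine $L_p^{\loc}$ function, and finally identifies the two limits via a Fr\'echet-metric argument on $L_p^{\loc}(\R)$. None of this structure appears in your outline; without it, the embedding for $p<1$ remains a plausible heuristic rather than a proof.
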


\begin{remark}\label{rem:E_in_M}
    The essence of the proof of (ii) for $p<1$ below is that for $f\in \mathcal{E}^{s}_{u,p,q}(\R)$ with the given choice of parameters, the sequence $\sum_{k=0}^N \mathcal{F}^{-1}(\varphi_k\,\mathcal{F}f)$ converges in $\mathcal{S}'(\R)$ \emph{and} in $\mathcal{M}^u_p(\R)$ to the \emph{same limit} (namely $f$), as $N\to\infty$, which is not necessarily the case in general. 
    In this regard, we refer to the corresponding discussion in the context of Besov spaces and $L_p(\R)$ in \cite[Lem.~2.2.4]{RuSi96} as well as \cite[Rem.~2.3.2/2]{Tr92} and \cite[Rem.~2.5.3/1-3]{Tr83}.
\end{remark}

\begin{proof}[Proof (of \autoref{prop:E_in_M})]
    \emph{Step 1. } Part~(i) has been shown, e.g., in \cite[Prop.~4.1]{maz}. 
    
    \emph{Step 2. } The case $p\geq 1$ in (ii) is covered by \cite[Thm.~3.2]{HaMoSkr2020}.     
    Therefore, we focus on $p<1$ such that $s^\ast:=\frac{p}{u}\sigma_p>0$. 
    
    Given $f \in \mathcal{S}'(\R)$ and $N\in\N_0$ let us define $f^N:=\sum_{k=0}^N u_k$, where $u_k:=\mathcal{F}^{-1}(\varphi_k\,\mathcal{F}f)$, $k\in\N_0$.
    Clearly, each $u_k$ and hence all $f^N$ belong to $\mathcal{S}'(\R)$, but can be identified with entire analytic (and hence measurable) functions.
    Moreover, in $\mathcal{S}'(\R)$ there holds $f^N \stackrel{N\to\infty}{\longrightarrow} f$.
    If we additionally assume that $f \in \mathcal{E}^{s}_{u,p,q}(\R) \hookrightarrow \mathcal{S}'(\R)$ with $s\in\mathbb{R}$, then the dyadic annuli criterion (\autoref{prop:dyadic_crit}) yields that $f^N$ converges in $\mathcal{S}'(\R)$ to some $U \in \mathcal{E}^{s}_{u,p,q}(\R)$, where (as limits in $\mathcal{S}'(\R)$ are unique) we have $U=f$. If we further assume that $q<\infty$, \autoref{prop:dyadic_crit} even yields that $f^N \stackrel{N\to\infty}{\longrightarrow} f$ in $\mathcal{E}^{s}_{u,p,q}(\R)$.
    
    On the other hand, under the same assumptions (i.e.\ $f \in \mathcal{E}^{s}_{u,p,q}(\R)$ with $s\in\mathbb{R}$ and $q<\infty$), we can estimate
    $$
        \abs{u_k(x)} = 2^{-ks} \abs{2^{ks} \, u_k(x)} \leq 2^{-ks} \bigg(\sum_{j=0}^\infty \abs{2^{js} \, u_j(x)}^q \bigg)^{\frac{1}{q}}, \qquad x\in\R,\; k\in\N_0,
    $$
    such that \autoref{lem:M}(i) yields $u_k\in \mathcal{M}^u_p(\R)$ with $\norm{u_k\sep \mathcal{M}^u_p(\R)} \leq 2^{-ks} \norm{f \sep \mathcal{E}^{s}_{u,p,q}(\R)}$ and thus also $f^N\in \mathcal{M}^u_p(\R)$ for every $N\in\N_0$. 
    If even $s>0$, then $(f^N)_{N\in\N_0}$ forms a Cauchy sequence in $\mathcal{M}^u_p(\R)$ as \autoref{lem:M}(iv) and the previous estimate imply that
    \begin{align*}
        \norm{f^M - f^N \sep \mathcal{M}^u_p(\R)}^p 
        &= \norm{\sum_{k=N+1}^M u_k \sep \mathcal{M}^u_p(\R)}^p \\
        &\leq \sum_{k=N+1}^M \norm{ u_k \sep \mathcal{M}^u_p(\R)}^p \\
        &\leq \sum_{k=N+1}^\infty 2^{-ksp} \norm{f \sep \mathcal{E}^{s}_{u,p,q}(\R)}^p < \varepsilon, \qquad M>N\geq N_0(\varepsilon).
    \end{align*}
    By completeness, in $\mathcal{M}^u_p(\R)$ it converges to some $f^\infty := \lim_{N\to\infty}\limits f^N = \sum_{k=0}^\infty u_k \in \mathcal{M}^u_p(\R)$ for which we similarly find that
    $$
        \norm{f^\infty \sep \mathcal{M}^u_p(\R)}^p 
        \leq \sum_{k=0}^\infty \norm{ u_k \sep \mathcal{M}^u_p(\R)}^p \\
        \leq \sum_{k=0}^\infty 2^{-ksp} \norm{f \sep \mathcal{E}^{s}_{u,p,q}(\R)}^p
        \sim \norm{f \sep \mathcal{E}^{s}_{u,p,q}(\R)}^p,
    $$
    as $s>0$.  
    
    Hence, to prove (ii) for $p<1$, it therefore suffices to show that under the additional condition $s=s^\ast=\frac{p}{u}\,\sigma_p$ every $f \in \mathcal{E}^{s}_{u,p,q}(\R)$ can be interpreted as a function in $L_p^\loc(\R)$ for which $f=f^\infty$ almost everywhere.
    To see this, we note that from \cite[Thm.~3.2]{HaMoSkr2020} and \eqref{eq:morrey_embedding} with $p<1$ it follows $\mathcal{E}^{s^\ast}_{u,p,q}(\R) \hookrightarrow \mathcal{M}^{\frac{u}{p}}_{1}(\R)\hookrightarrow \mathcal{M}^{\frac{u}{p}}_{p}(\R)$ such that indeed $f\in L_p^\loc(\R)$.
    Further, it is easily seen that the expressions
    $$
        (g,h) \mapsto d_\ell(g,h) := \norm{g-h \sep L_p(B_\ell)}^p,
        \qquad\text{where}\quad B_\ell:=B(0,\ell)\subseteq\R \quad\text{and}\quad \ell\in\N,
    $$
    define a countable family of pseudo metrics on $L_p^\loc(\R)$ which is separating in the sense that $g,h\in L_p^\loc(\R)$ with $d_\ell(g,h)=0$ for all $\ell\in\N$ implies $g=h$ almost everywhere.    
    Next we note that for $g,h\in \mathcal{M}^{\widetilde{u}}_{p}(\R)$ with $0<p \leq \min\{1,\widetilde{u}\}$ there holds
    $$
        d_\ell(g,h) = \Big( \abs{B_\ell}^{\frac{1}{\widetilde{u}}-\frac{1}{p}} \norm{g-h \sep L_p(B_\ell)} \Big)^p \abs{B_\ell}^{1-\frac{p}{\widetilde{u}}}
        \lesssim \norm{g-h \sep \mathcal{M}^{\widetilde{u}}_{p}(\R)}^p \, \ell^{d(1-\frac{p}{\widetilde{u}})}, \qquad \ell\in\N.
    $$
    Therefore, the Fr\'echet combination
    $$
        d(g,h) := \sum_{\ell=1}^\infty 2^{-\ell} \, \frac{d_\ell(g,h)}{1 + d_\ell(g,h)}
    $$
    defines a metric on $L_p^\loc(\R)$ which satisfies
    $$
        d(g,h) \lesssim \sum_{\ell=1}^\infty \frac{\ell^{d(1-\frac{p}{\widetilde{u}})}}{2^{\ell}} \, \frac{\norm{g-h \sep \mathcal{M}^{\widetilde{u}}_{p}(\R)}^p}{1 + d_\ell(g,h)}
        \lesssim \norm{g-h \sep \mathcal{M}^{\widetilde{u}}_{p}(\R)}^p,
        \qquad g,h\in \mathcal{M}^{\widetilde{u}}_{p}(\R).
    $$
    So, based on the arguments above we find that
    \begin{align*}
        d(f^\infty,f) 
        &\leq d(f^\infty,f^N) + d(f^N,f) \\
        &\lesssim \norm{f^\infty-f^N \sep \mathcal{M}^{u}_{p}(\R)}^p + \norm{f^N-f \sep \mathcal{M}^{\frac{u}{p}}_{p}(\R)}^p \\
        &\lesssim \norm{f^\infty-f^N \sep \mathcal{M}^{u}_{p}(\R)}^p + \norm{f^N-f \sep \mathcal{E}^{s^\ast}_{u,p,q}(\R)}^p
        \stackrel{N\to\infty}{\longrightarrow} 0
    \end{align*}
    and thus $f=f^\infty$ in $L_p^\loc(\R)$ as claimed.

    \emph{Step 3. }
    Finally, to prove the last statement, we distinguish two cases. If $p>1$, assertion~(i) shows $\mathcal{E}^{0}_{u,\max\{1,p\},2}(\R) = \mathcal{E}^{0}_{u,p,2}(\R) = \mathcal{M}^u_p(\R)$, while for $p\leq 1$ there holds
    $$
        \mathcal{E}^{0}_{u,\max\{1,p\},2}(\R) 
        = \mathcal{E}^{0}_{u,1,2}(\R)
        \hookrightarrow \mathcal{M}^u_1(\R)
        \hookrightarrow \mathcal{M}^u_p(\R)
    $$
    using (ii) with $p=1$ and \eqref{eq:morrey_embedding}.    
\end{proof}

\section{Bounds for the (Quasi-)Norm of Dilation Operators \texorpdfstring{$D_\lambda$}{Dlambda} on Triebel-Lizorkin-Morrey Spaces \texorpdfstring{$\mathcal{E}^s_{u,p,q}(\R)$}{EsupqRd}}
\label{sec_main_sec_main}
Here we prove the main results of this paper.
To start with, let us investigate (quasi-)norm estimates related to dilations by factors $\lambda$ near $1$.

\subsection{The Case \texorpdfstring{$\lambda \approx 1$}{lambda near 1}}\label{sec_dilation_near1}
\begin{prop}\label{prop:T_mu}
    Let $0 < p \leq u < \infty$, $0 < q \leq \infty$ and $s\in \mathbb{R}$ as well as $2^{-1}<\mu< 2$ and $f\in \mathcal{S}'(\mathbb{R}^d)$. 
    Then $D_\mu f\in \mathcal{E}^s_{u,p,q}(\mathbb{R}^d)$ if and only if $f \in \mathcal{E}^s_{u,p,q}(\mathbb{R}^d)$. 
    In this case there holds
    $$
        \norm{D_\mu f\sep \mathcal{E}^s_{u,p,q}(\mathbb{R}^d)} \sim \norm{f\sep \mathcal{E}^s_{u,p,q}(\mathbb{R}^d)}
    $$
    with implied constants independent of $f$ and $\mu$. In particular,
    $
        \norm{D_\mu \sep \mathcal{L}\big(\mathcal{E}^s_{u,p,q}(\R)\big)}
        \sim 1.
    $
\end{prop}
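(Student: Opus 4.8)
The plan is to prove the two-sided estimate $\norm{D_\mu f\sep \mathcal{E}^s_{u,p,q}(\R)} \sim \norm{f\sep \mathcal{E}^s_{u,p,q}(\R)}$ for a fixed distribution $f$, with constants uniform in $\mu\in(2^{-1},2)$; the operator-norm consequence $\norm{D_\mu\sep\cl(\mathcal{E}^s_{u,p,q}(\R))}\sim 1$ then follows at once by taking the supremum over the unit ball, and the "if and only if" claim is built in (both directions of the norm equivalence show membership is preserved, using that $(D_\mu)^{-1}=D_{\mu^{-1}}$ with $\mu^{-1}\in(2^{-1},2)$ as well). By symmetry it suffices to prove $\norm{D_\mu f\sep \mathcal{E}^s_{u,p,q}(\R)} \lesssim \norm{f\sep \mathcal{E}^s_{u,p,q}(\R)}$.

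The key device is the identity \eqref{eq:building_block_shift}: for the fixed dyadic decomposition of unity $(\varphi_k)_{k\in\N_0}$ we have, pointwise,
$$
    \mathcal{F}^{-1}\big(\varphi_k\,\mathcal{F}[D_\mu f]\big)
    = D_\mu\Big[\mathcal{F}^{-1}\big([D_\mu\varphi_k]\,\mathcal{F}f\big)\Big].
$$
Because $\mu\in(2^{-1},2)$, the support of $D_\mu\varphi_k$ still lies in a fixed dyadic annulus of the form $\{2^{k-2}\lesssim|\xi|\lesssim 2^{k+1}\}$ (with an extra fattening at $k=0,1$), so $D_\mu\varphi_k$ can be written as $D_\mu\varphi_k = \varphi_k^\ast\cdot\sum_{|j-k|\le c}\varphi_j$ where $\varphi_k^\ast$ is a smooth cutoff equal to $1$ on that annulus; thus
$$
    \mathcal{F}^{-1}\big([D_\mu\varphi_k]\,\mathcal{F}f\big)
    = \mathcal{F}^{-1}\Big([D_\mu\varphi_k]\,\mathcal{F}\Big[\textstyle\sum_{|j-k|\le c}\mathcal{F}^{-1}(\varphi_j\,\mathcal{F}f)\Big]\Big).
$$
This exhibits $\mathcal{F}^{-1}(\varphi_k\,\mathcal{F}[D_\mu f])$ as a dilate of a Fourier multiplier (symbol $D_\mu\varphi_k$, supported in $B(0,2^{k+1})$) applied to a band-limited function supported in $B(0,2^{k+2})$, built from finitely many neighboring pieces of $f$.

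The core estimate is then: apply $\norm{\cdot\sep\mathcal{M}^u_p(\R)}$, use \autoref{lem:M}(iii) to pull out the dilation $D_\mu$ at the price of $\mu^{-d/u}\sim 1$, use the new Fourier multiplier theorem \autoref{prop:FM_M} on each frequency block (with $m\sim 2^{k+1}$, $\ell\sim 2^{k+2}$, so $(m+\ell)^{\sigma_p}\sim 2^{k\sigma_p}$) together with the uniform bound $\norm{\mathcal{F}^{-1}[D_\mu\varphi_k]\sep L_{\min\{1,p\}}(\R)} = \mu^{-d}\norm{D_{\mu^{-1}}[\mathcal{F}^{-1}\varphi_k]\sep L_{\min\{1,p\}}(\R)}\lesssim 2^{-k\sigma_p}$ — here the scaling of the $L_{\min\{1,p\}}$-norm exactly cancels the $2^{k\sigma_p}$ factor, and the constant is uniform in $\mu\in(2^{-1},2)$ and in $k$ because $\mathcal{F}^{-1}\varphi_k = 2^{kd}D_{2^k}\mathcal{F}^{-1}\psi$ for $k\ge1$ so these norms scale in a controlled, $k$-independent (after normalization) way. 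This reduces the estimate, after inserting the $2^{ksq}$ weights and using $2^{ks}\sim 2^{js}$ for $|j-k|\le c$, to
$$
    \norm{D_\mu f\sep \mathcal{E}^s_{u,p,q}(\R)}
    \lesssim \norm{\Big(\sum_{k=0}^\infty 2^{ksq}\big|\textstyle\sum_{|j-k|\le c}|\mathcal{F}^{-1}(\varphi_j\,\mathcal{F}f)(\cdot)|\big|^q\Big)^{1/q}\sep\mathcal{M}^u_p(\R)},
$$
which is controlled by $\norm{f\sep\mathcal{E}^s_{u,p,q}(\R)}$ via the (quasi-)triangle inequality in $\ell_q$ and the overlap of only $2c+1$ terms.

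The main obstacle — and the point requiring care — is making the Fourier-multiplier step genuinely uniform in $\mu$: one must verify that the implied constant in \autoref{prop:FM_M}, combined with the $\mu$-dependent factors $\mu^{-d}$ and $\mu^{-d/u}$ and the $L_{\min\{1,p\}}$-norm of $\mathcal{F}^{-1}[D_\mu\varphi_k]$, stays bounded above and below as $\mu$ ranges over the compact-closure interval $[2^{-1},2]$, and simultaneously uniform in $k$. Since $\mathcal{F}^{-1}\varphi_k$ for $k\ge 1$ is a fixed dilate of a single Schwartz function, the relevant $L_{\min\{1,p\}}$-norms are $2^{kd(1-1/\min\{1,p\})}$ times a constant depending only on $\varphi$ and (continuously, hence boundedly) on $\mu$; for $k=0$ there is just one function. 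A secondary technical point is the necessity of applying \autoref{prop:FM_M} to the band-limited function $\sum_{|j-k|\le c}\mathcal{F}^{-1}(\varphi_j\,\mathcal{F}f)$ rather than to $f$ itself (which need not lie in $\mathcal{M}^u_p(\R)$); one should note this finite sum is band-limited and, as a sum of band-limited pieces with $\mathcal{M}^u_p$-membership guaranteed by \autoref{lem:M}(i) applied to the defining Triebel-Lizorkin-Morrey bound, belongs to $\mathcal{M}^u_p(\R)$, so \autoref{prop:FM_M} is applicable. The reverse inequality is obtained by the same argument applied to $D_{\mu^{-1}}(D_\mu f)=f$ with $\mu^{-1}\in(2^{-1},2)$.
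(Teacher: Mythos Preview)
Your overall strategy---use \eqref{eq:building_block_shift}, observe that $D_\mu\varphi_k$ is supported in a slightly fattened dyadic annulus, insert the neighboring pieces $\sum_{|j-k|\le c}\varphi_j$, and finish with an $\ell_q$ overlap argument---is exactly the paper's. The gap is in the Fourier-multiplier step. \autoref{prop:FM_M} is a \emph{scalar} statement: it controls $\norm{\mathcal{F}^{-1}(M\,\mathcal{F}g)\sep\mathcal{M}^u_p(\R)}$ by $\norm{g\sep\mathcal{M}^u_p(\R)}$, not $|\mathcal{F}^{-1}(M\,\mathcal{F}g)(x)|$ by $|g(x)|$. But the $\mathcal{E}^s_{u,p,q}$ norm has the $\ell_q$-sum \emph{inside} the Morrey norm, so to pass from
\[
\norm{\Big(\sum_k 2^{ksq}\big|\mathcal{F}^{-1}([D_\mu\varphi_k]\,\mathcal{F}f_k)(\cdot)\big|^q\Big)^{1/q}\sep\mathcal{M}^u_p(\R)}
\quad\text{to}\quad
\norm{\Big(\sum_k 2^{ksq}|f_k(\cdot)|^q\Big)^{1/q}\sep\mathcal{M}^u_p(\R)}
\]
you need a \emph{vector-valued} multiplier estimate. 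Your displayed inequality with $\sum_{|j-k|\le c}|\mathcal{F}^{-1}(\varphi_j\,\mathcal{F}f)(\cdot)|$ on the inside implicitly assumes a pointwise bound that convolution operators simply do not satisfy. Nor can you rescue the argument by pulling the $\ell_q$ outside via \autoref{lem:M}(iv) with exponent $\min\{p,q\}$, applying \autoref{prop:FM_M}, and going back: that lands you in a Besov--Morrey quantity $\mathcal{N}^s_{u,p,\min\{p,q\}}$, which $\mathcal{E}^s_{u,p,q}$ does not embed into in general.

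The paper fixes this by invoking the vector-valued multiplier theorem \autoref{prop:vv-FM} with $M_k:=D_\mu\varphi_k$ and $d_k:=2^{k+3}$; the key observation (replacing your $L_{\min\{1,p\}}$ scaling computation) is that $D_{d_k}M_k=D_{8\mu}\psi$ for $k\ge 1$ and $D_{8\mu}\varphi_0$ for $k=0$, so $\sup_k\norm{D_{d_k}M_k\sep H^\nu(\R)}$ is a fixed finite quantity for $\mu\in(\tfrac12,2)$. The paper explicitly remarks (just after the proof of \autoref{prop:T_mu}) that \autoref{prop:vv-FM} is the right tool here precisely because uniformity in $\mu$ on a compact range is all that is needed, whereas \autoref{prop:FM_M} is reserved for the later arguments where the sharper $\lambda$-dependence matters.
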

\begin{proof}
    W.l.o.g.\ we assume $q<\infty$, since in the case $q = \infty$ the usual modifications can be made. Further, it is enough to show that $f \in \mathcal{E}^s_{u,p,q}(\mathbb{R}^d)$ implies $D_\mu f\in \mathcal{E}^s_{u,p,q}(\mathbb{R}^d)$ with
    $$
        \norm{D_\mu f\sep \mathcal{E}^s_{u,p,q}(\mathbb{R}^d)} 
        \lesssim \norm{f\sep \mathcal{E}^s_{u,p,q}(\mathbb{R}^d)}.
    $$
    To this end, note that with $f$ also $D_\mu f$ belongs to $\mathcal{S}'(\mathbb{R}^d)$. 
    Setting $u_k:=\mathcal{F}^{-1}\big( \varphi_k \, \mathcal{F}[D_\mu f]\big)$ for $k\in\N_0$ and $U^N:=\sum_{k=0}^N u_k$ for $N\in\N$, we can decompose $D_\mu f$ into
    $$
        D_\mu f 
        = \sum_{k=0}^\infty u_k
        = \lim_{N\to\infty} U^N
        \qquad\text{(convergence in $\mathcal{S}'(\mathbb{R}^d)$)}.
    $$
    If we can now prove that 
    $$
        U^N \in \mathcal{E}^s_{u,p,q}(\mathbb{R}^d) \quad\text{with}\quad
        \norm{U^N\sep \mathcal{E}^s_{u,p,q}(\mathbb{R}^d)} 
        \lesssim \norm{f\sep \mathcal{E}^s_{u,p,q}(\mathbb{R}^d)},
        \qquad N\in\N,
    $$
    where the implied constant does not depend on $N$, then the Fatou property \cite[Prop.~2.8]{ysy} implies that also $D_\mu f\in \mathcal{E}^s_{u,p,q}(\R)$ with
    $$
        \norm{D_\mu f \sep \mathcal{E}^s_{u,p,q}(\R)} \leq \sup_{N\in\N} \norm{U^N \sep \mathcal{E}^s_{u,p,q}(\R)} \lesssim \norm{f \sep \mathcal{E}^s_{u,p,q}(\R)}.
    $$
    For that purpose, we shall employ the dyadic annuli criterion (\autoref{prop:dyadic_crit}) for the sequence $(\widetilde{u}_k^N)_{k\in\N_0}$ given by $\widetilde{u}^N_k:=u_k$ for $k\in\{0,\ldots,N\}$ and $\widetilde{u}^N_k:=0$ otherwise, as then
    $$
        U^N = \sum_{k=0}^N u_k = \sum_{k=0}^\infty \widetilde{u}^N_k, \qquad N\in\N_0.
    $$
    \autoref{prop:dyadic_crit} is applicable, since for $k=0,\ldots,N$,  Formula~\eqref{eq:building_block_shift} implies
    $$
        \widetilde{u}^N_k = u_k 
        = \mathcal{F}^{-1}\big( \varphi_k \, \mathcal{F}[D_\mu f]\big)
        = D_\mu \big[ \mathcal{F}^{-1} \big( [D_\mu \varphi_k]\, \mathcal{F}f\big)\big]
    $$
    and we obviously have $\supp (\mathcal{F} \widetilde{u}^N_0) \subseteq B(0,4)$ while $\supp (\mathcal{F} \widetilde{u}^N_k) \subseteq B(0, 2^{k+2}) \setminus B(0,2^{k-3})$ if $k>0$.
    So, it remains to bound
    \begin{align*}
        A^N 
        &:= \norm{ \bigg( \sum_{k=0}^\infty 2^{ksq} \abs{\widetilde{u}^N_k (\cdot) }^q \bigg)^{\frac{1}{q}} \sep \mathcal{M}^u_p(\R)} \\
        &= \norm{ \bigg( \sum_{k=0}^N  2^{ksq} \abs{D_\mu \big[ \mathcal{F}^{-1} \big( [D_\mu \varphi_k]\, \mathcal{F}f\big)\big] (\cdot) }^q \bigg)^{\frac{1}{q}} \sep \mathcal{M}^u_p(\R)} \\
        &= \norm{ D_\mu \bigg( \sum_{k=0}^N \abs{ \big[ \mathcal{F}^{-1} \big( [D_\mu \varphi_k]\, 2^{ks} \mathcal{F}f\big)\big] (\cdot) }^q \bigg)^{\frac{1}{q}} \sep \mathcal{M}^u_p(\R)} \\
        &\sim \norm{ \bigg( \sum_{k=0}^N \abs{ \big[ \mathcal{F}^{-1} \big( [D_\mu \varphi_k]\, 2^{ks} \mathcal{F}f\big)\big] (\cdot) }^q \bigg)^{\frac{1}{q}} \sep \mathcal{M}^u_p(\R)}
    \end{align*}
    from above by $\norm{f \sep \mathcal{E}^s_{u,p,q}(\R)}$; see \autoref{lem:M}(iii) for the last step.
    To show this, we let $N\in\N$ and note that from $b\leq 2\leq 2 a$ and $2^{-1}<\mu<2$ it follows 
    $$
        \supp(D_\mu\varphi_k) 
        \subseteq \left\{x\in\R : \frac{2^{k-1}a}{\mu} \leq \abs{x} \leq \frac{2^k b}{\mu} \right\}
        \subseteq \left\{x\in\R : 2^{k-3}b \leq \abs{x} \leq 2^{k+2}a \right\}=:R_k
    $$
    if $k\in\N$, while $\supp(D_\mu\varphi_0) \subseteq \left\{x\in\R : \abs{x}\leq \frac{b}{\mu}\right\} \subseteq B(0, 4a) =: R_0$.
    Furthermore (for this proof) let us define $\varphi_{-2} := \varphi_{-1} :\equiv 0$. 
    Then for $k\in\N_0$ there holds $\widetilde{\varphi}_k := \sum_{j=k-2}^{k+2} \varphi_j \equiv 1$ on $R_k$ such that we can write
    $$
        \mathcal{F}^{-1} \big( [D_\mu \varphi_k]\, 2^{ks} \mathcal{F}f\big)
        = \mathcal{F}^{-1} \Big( [D_\mu \varphi_k]\, \mathcal{F} \big( \mathcal{F}^{-1} [\widetilde{\varphi}_k\, 2^{ks}\, \mathcal{F}f] \big)\Big)
        = \mathcal{F}^{-1} \big( M_k\, \mathcal{F}f_k \big),
    $$
    where $M_k := D_\mu\varphi_k$ and $f_k := \mathcal{F}^{-1} [ \widetilde{\varphi}_k \, 2^{ks} \,\mathcal{F}f]$ satisfies $\supp(\mathcal{F}f_k)\subseteq \Omega_k := B(0, d_k)$ with $d_k:=2^{k+3}$. 
    Moreover, recall that for $k\in\N$ we have $\varphi_k = D_{2^{-k}}\psi$ such that the Bessel potential norms of
    $$
         D_{d_k} M_k = D_{2^{k+3}} D_\mu D_{2^{-k}}\psi = D_{8\mu}\psi \in \mathcal{D}(\R)
         \qquad\text{and}\qquad
         D_{d_0} M_0 = D_{8\mu}\varphi_0 \in \mathcal{D}(\R)
    $$
    do not depend on $k$. 
    Hence, for $\nu > \frac{d}{2}+\frac{d}{\min\{p,q\}}$
    $$
        \sup_{k\in\N_0} \norm{D_{d_k}M_k \sep H^\nu(\R)}
        \leq \sup_{4\leq \lambda \leq 16} \max\left\{\norm{D_{\lambda}\psi \sep H^\nu(\R)}, \norm{D_\lambda \varphi_0 \sep H^\nu(\R)}\right\}
    $$
    is finite and the vector-valued Fourier multiplier assertion given in \autoref{prop:vv-FM} yields
    \begin{align*}
        A^N
        &\sim \norm{ \bigg( \sum_{k=0}^N \abs{ \big( \mathcal{F}^{-1} [ M_k\, \mathcal{F}f_k ] \big) (\cdot) }^q \bigg)^{\frac{1}{q}} \sep \mathcal{M}^u_p(\R)} 
        &\lesssim \norm{ \bigg( \sum_{k=0}^N \abs{ f_k (\cdot) }^q \bigg)^{\frac{1}{q}} \sep \mathcal{M}^u_p(\R)}.
    \end{align*}
    Now for each $k\in\N_0$ and a.e.\ $x\in\R$ we have
    $$
        \abs{f_k(x)} 
        \leq 2^{2s} \sum_{j=k-2}^{k+2} \abs{\big(\mathcal{F}^{-1} [ \varphi_j \, 2^{(k-2)s} \mathcal{F}f ] \big)(x)}
        \lesssim  \sum_{j=k-2}^{k+2} 2^{js} \abs{\big(\mathcal{F}^{-1} [ \varphi_j \, \mathcal{F}f] \big)(x)}
    $$
    and thus
    $$
        \bigg( \sum_{k=0}^N \abs{ f_k (x) }^q \bigg)^{\frac{1}{q}} 
        \lesssim \bigg( \sum_{j=0}^{N+2}2^{jsq} \abs{\big(\mathcal{F}^{-1} [ \varphi_j \, \mathcal{F}f] \big)(x) }^q \bigg)^{\frac{1}{q}}
        \leq \bigg( \sum_{j=0}^{\infty} 2^{jsq} \abs{\big(\mathcal{F}^{-1} [ \varphi_j \, \mathcal{F}f] \big)(x) }^q \bigg)^{\frac{1}{q}}
    $$
    with an implied constant independent of $N\in\N_0$. 
    In view of the previous estimates and \autoref{lem:M}(i) this finally yields
    $$
        \norm{U^N \sep \mathcal{E}^s_{u,p,q}(\R)} 
        \lesssim A^N
        \lesssim \norm{ \bigg( \sum_{j=0}^{\infty} 2^{jsq} \abs{\big(\mathcal{F}^{-1} [ \varphi_j \, \mathcal{F}f] \big)(\cdot) }^q \bigg)^{\frac{1}{q}} \sep \mathcal{M}^u_p(\R)}
        = \norm{f \sep \mathcal{E}^s_{u,p,q}(\R)}
    $$
    and the proof is complete.
\end{proof}

Let us stress that the same proof technique can be used to show
that for all $\lambda > 1$ there exists $c_\lambda>0$ such that
$$
        \norm{D_\lambda f\sep \mathcal{E}^s_{u,p,q}(\mathbb{R}^d)} 
        \leq c_\lambda \norm{f\sep \mathcal{E}^s_{u,p,q}(\mathbb{R}^d)}, \qquad f\in \mathcal{E}^s_{u,p,q}(\mathbb{R}^d).
$$
However, due to the use of \autoref{prop:vv-FM},
this constant $c_\lambda$ would depend on $\lambda$ in a sub-optimal way.
Therefore, we shall instead employ \autoref{prop:FM_M} in order to prove our general upper bounds in the following subsection; see \autoref{thm:upper_bounds} below.

\begin{remark}\label{rem:reduction}
    Note that \autoref{prop:T_mu} allows to reduce (quasi-)norm estimates of the form $\lambda^\alpha \, (\log_2 \lambda)^\beta$ for dilation operators $D_\lambda$, where $\lambda \geq 2$ and $\alpha,\beta\in\mathbb{R}$, to the dyadic case $\lambda=\lambda_j:=2^j$ with $j\in\N$.
    Indeed, for $\lambda > 2$ there exists an unique $j\in\N\setminus\{1\}$ such that
    $$
        \mu := \lambda_j^{-1}\, \lambda \in (2^{-1}, 1].
    $$
    Then we have $\frac{1}{2}\,\lambda_j < \lambda \leq \lambda_j$  and $\frac{1}{2} \, \log_2 \lambda_j = \frac{1}{2}\, j \leq j-1 < \log_2(2^j \, \mu) = \log_2\lambda \leq \log_2\lambda_j$
    which shows that
    $$
        \lambda_j^\alpha \, (\log_2 \lambda_j)^\beta 
        \sim \lambda^\alpha \, (\log_2 \lambda)^\beta,
        \qquad \alpha,\beta\in\mathbb{R}.
    $$
    Moreover, $\lambda_j = \lambda \, \mu^{-1}$
    and $\lambda=\lambda_{j}\,\mu$ imply
    $D_{\lambda_j} = D_{\lambda}\circ D_{\mu^{-1}}$ as well as $D_\lambda = D_{\lambda_j}\circ D_\mu$ such that \autoref{prop:T_mu} yields that for all $0<p\leq u < \infty$, $0<q\leq\infty$ and $s\in\mathbb{R}$
    \begin{align*}
        \norm{ D_{\lambda_j} \sep \mathcal{L}\big(\mathcal{E}^s_{u,p,q}(\R)\big) }
        &\leq \norm{ D_{\lambda} \sep \mathcal{L}\big(\mathcal{E}^s_{u,p,q}(\R)\big) } \norm{ D_{\mu^{-1}} \sep \mathcal{L}\big(\mathcal{E}^s_{u,p,q}(\R)\big) }  \\
        &\sim \norm{ D_{\lambda} \sep \mathcal{L}\big(\mathcal{E}^s_{u,p,q}(\R)\big) } \\
        &\lesssim \norm{ D_{\lambda_j} \sep \mathcal{L}\big(\mathcal{E}^s_{u,p,q}(\R)\big) },
    \end{align*}
    i.e., $\norm{ D_{\lambda_j} \sep \mathcal{L}\big(\mathcal{E}^s_{u,p,q}(\R)\big) } \sim \norm{ D_{\lambda} \sep \mathcal{L}\big(\mathcal{E}^s_{u,p,q}(\R)\big) }$.
\end{remark}

\subsection{Upper Bounds for \texorpdfstring{$\lambda\geq 2$}{lambda>=2}}\label{sec_upper_bounds_1}
In this subsection we prove the upper estimates for the operator (quasi-)norm $\norm{D_\lambda \sep \mathcal{L}\big(\mathcal{E}^s_{u,p,q}(\R)\big)}$, where $\lambda \geq 2$. 

\pagebreak
\begin{prop}\label{thm:upper_bounds}
    Let $0<p\leq u <\infty$, $0<q\leq \infty$ as well as $s\in\mathbb{R}$ and $\lambda \geq 2$. Then for the restriction of $D_\lambda$ to $\mathcal{E}^s_{u,p,q}(\R)$ there holds
    \begin{enumerate}
        \item $\norm{D_\lambda \sep \mathcal{L}\big(\mathcal{E}^s_{u,p,q}(\R)\big)} \lesssim \lambda^{s- \frac{d}{u}}$ if $s>\sigma_p$.

        \item $\norm{D_\lambda \sep \mathcal{L}\big(\mathcal{E}^{\sigma_p}_{u,p,q}(\R)\big)} \lesssim \lambda^{\sigma_p- \frac{d}{u}} \, (\log_2 \lambda)^{\max\{\frac{1}{p}, \frac{1}{q}\}}$.
        
        If we additionally assume that $p>1$ (such that $\sigma_p=0$), then we even have
        $\norm{D_\lambda \sep \mathcal{L}\big(\mathcal{E}^{\sigma_p}_{u,p,q}(\R)\big)} \lesssim \lambda^{\sigma_p- \frac{d}{u}} \, (\log_2 \lambda)^{\max\{0, \frac{1}{q} - \frac{1}{2}\}}$. 
        
        \item $\norm{D_\lambda \sep \mathcal{L}\big(\mathcal{E}^s_{u,p,q}(\R)\big)} \lesssim \lambda^{\sigma_p- \frac{d}{u}}$ if $s<\sigma_p$.
    \end{enumerate}
    Therein the implicit constants do not depend on $\lambda$. To incorporate the case $q = \infty$  we use the convention $\frac{1}{\infty}= 0$.
\end{prop}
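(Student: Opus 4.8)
The plan is to pass to dyadic dilation factors, decompose $D_{2^j}f$ into a low-frequency and a high-frequency part, and estimate these by the two Fourier multiplier results \autoref{prop:vv-FM} and \autoref{prop:FM_M}, respectively. By \autoref{rem:reduction} it suffices to establish every claimed bound for the dyadic factors $\lambda = 2^j$, $j\in\N$. So fix $f\in\mathcal{E}^s_{u,p,q}(\R)$; we may assume $q<\infty$, since for $q=\infty$ one only replaces $\ell_q$-sums by suprema and uses $\frac{1}{\infty}=0$. Writing $g_k:=\mathcal{F}^{-1}(\varphi_k\,\mathcal{F}[D_{2^j}f])$ we have $D_{2^j}f=\sum_{k\ge0}g_k$ in $\mathcal{S}'(\R)$, and by \autoref{lem:M}(i),(iv) together with the quasi-triangle inequality in $\ell_q$ it is enough to bound separately the \emph{low part} $\norm{\big(\sum_{k=0}^{j-1}2^{ksq}\abs{g_k}^q\big)^{1/q} \sep \mathcal{M}^u_p(\R)}$ and the \emph{high part} $\norm{\big(\sum_{k\ge j}2^{ksq}\abs{g_k}^q\big)^{1/q} \sep \mathcal{M}^u_p(\R)}$. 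Throughout we use \eqref{eq:building_block_shift} in the form $g_k=D_{2^j}\big[\mathcal{F}^{-1}\big([D_{2^j}\varphi_k]\,\mathcal{F}f\big)\big]$, the relation $D_{2^j}\varphi_k=\psi_{k-j}$ for $k\ge1$, and \autoref{lem:M}(iii) to pull $D_{2^j}$ out of a Morrey norm at the expense of a factor $2^{-jd/u}$.

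\emph{High part.} For $k\ge j$ the band-limited multiplier $\psi_{k-j}$ is supported in an annulus of radius $\sim 2^{k-j}$ which stays away from the origin, so $\psi_{k-j}\,\mathcal{F}f=\psi_{k-j}\,\mathcal{F}\widetilde{f}_{k-j}$, where $\widetilde{f}_m:=\mathcal{F}^{-1}(\widetilde{\varphi}_m\,\mathcal{F}f)$ and $\widetilde{\varphi}_m:=\sum_{\abs{i}\le2}\varphi_{m+i}\equiv1$ on $\supp\psi_m$ (with $\varphi_{-1}:=\varphi_{-2}:\equiv0$). Substituting $\ell:=k-j$ and applying \autoref{lem:M}(iii), the high part becomes, up to constants, $2^{j(s-\frac{d}{u})}\,\norm{\big(\sum_{\ell\ge0}2^{\ell sq}\abs{\mathcal{F}^{-1}[\psi_\ell\,\mathcal{F}\widetilde{f}_\ell]}^q\big)^{1/q}\sep\mathcal{M}^u_p(\R)}$. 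Since $\widetilde{f}_\ell$ is band-limited to $B(0,2^{\ell+3})$ while $D_{2^{\ell+3}}\psi_\ell=D_8\psi$ is independent of $\ell$, \autoref{prop:vv-FM} (with $M_\ell:=\psi_\ell$, $d_\ell:=2^{\ell+3}$, exactly as in the proof of \autoref{prop:T_mu}) estimates the last norm by a constant times $\norm{\big(\sum_{\ell\ge0}2^{\ell sq}\abs{\widetilde{f}_\ell}^q\big)^{1/q}\sep\mathcal{M}^u_p(\R)}\lesssim\norm{f\sep\mathcal{E}^s_{u,p,q}(\R)}$. Hence the high part is $\lesssim 2^{j(s-\frac{d}{u})}\norm{f\sep\mathcal{E}^s_{u,p,q}(\R)}$ for \emph{every} $s\in\mathbb{R}$.

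\emph{Low part.} For $0\le k<j$ each multiplier $N_k:=D_{2^j}\varphi_k$ (that is, $N_0=D_{2^j}\varphi_0$ and $N_k=\psi_{k-j}$ for $1\le k<j$) is supported in $B(0,1)$, where $\varphi_0\equiv1$; therefore $g_k=D_{2^j}\big[\mathcal{F}^{-1}(N_k\,\mathcal{F}f_0)\big]$ with $f_0:=\mathcal{F}^{-1}(\varphi_0\,\mathcal{F}f)$, and $\norm{f_0\sep\mathcal{M}^u_p(\R)}\le\norm{f\sep\mathcal{E}^s_{u,p,q}(\R)}$ by \autoref{lem:M}(i). By \autoref{lem:M}(iii) the low part equals, up to constants, $2^{-jd/u}\,\norm{H\sep\mathcal{M}^u_p(\R)}$ with $H:=\big(\sum_{k=0}^{j-1}2^{ksq}\abs{\mathcal{F}^{-1}[N_k\,\mathcal{F}f_0]}^q\big)^{1/q}$. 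Put $\tau:=\min\{1,p,q\}$, so $\tau\le q$ and $p/\tau\ge1$. Using \autoref{lem:M}(ii) to pass to the Banach-space identity $\norm{H\sep\mathcal{M}^u_p(\R)}^\tau=\norm{H^\tau\sep\mathcal{M}^{u/\tau}_{p/\tau}(\R)}$, then the triangle inequality, \autoref{prop:FM_M} applied to each summand (note $\supp N_k\subseteq B(0,1)$ and $\supp\mathcal{F}f_0\subseteq B(0,b)$ with $b\le2$, so the factor $(m+\ell)^{\sigma_p}$ is bounded), and the elementary identity $\norm{\mathcal{F}^{-1}N_k\sep L_{\min\{1,p\}}(\R)}\sim2^{(j-k)\sigma_p}$ (a direct computation from \eqref{eq:FTj_phi}), we obtain
$$
\norm{H\sep\mathcal{M}^u_p(\R)}\;\lesssim\;2^{j\sigma_p}\Big(\sum_{k=0}^{j-1}2^{k(s-\sigma_p)\tau}\Big)^{1/\tau}\norm{f\sep\mathcal{E}^s_{u,p,q}(\R)}.
$$
The inner sum is $\sim2^{j(s-\sigma_p)\tau}$ if $s>\sigma_p$, $\sim j$ if $s=\sigma_p$, and $\sim1$ if $s<\sigma_p$. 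Multiplying by $2^{-jd/u}$ and combining with the high part yields (i) and (iii); and since $\frac{1}{\tau}=\max\{\frac1p,\frac1q\}$ when $p\le1$, we also obtain the first estimate of (ii) in the case $p\le1$.

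\emph{The sharp bound in (ii) for $p>1$.} Here $s=\sigma_p=0$, and the first estimate of (ii) for $p>1$ follows a fortiori from the sharp one, because $\max\{0,\frac1q-\frac12\}\le\frac1q\le\max\{\frac1p,\frac1q\}$; so only the low part has to be refined. With $h_m:=\mathcal{F}^{-1}(\psi_{-m}\,\mathcal{F}f_0)$ we have $H=\big(\abs{\mathcal{F}^{-1}[(D_{2^j}\varphi_0)\,\mathcal{F}f_0]}^q+\sum_{m=1}^{j-1}\abs{h_m}^q\big)^{1/q}$, since $N_k=\psi_{-(j-k)}$ for $1\le k<j$. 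As $\sigma_p=0$, \autoref{prop:FM_M} applied to the first summand gives $\norm{\mathcal{F}^{-1}[(D_{2^j}\varphi_0)\,\mathcal{F}f_0]\sep\mathcal{M}^u_p(\R)}\lesssim\norm{\mathcal{F}^{-1}(D_{2^j}\varphi_0)\sep L_1(\R)}\,\norm{f_0\sep\mathcal{M}^u_p(\R)}=\norm{\mathcal{F}^{-1}\varphi_0\sep L_1(\R)}\,\norm{f_0\sep\mathcal{M}^u_p(\R)}\lesssim\norm{f\sep\mathcal{E}^0_{u,p,q}(\R)}$, whereas the Littlewood--Paley characterization \autoref{prop:LittlewoodPaley_M} — valid precisely because $p>1$, and recalling that $h_m$ is the $(-m)$-th Littlewood--Paley block of $f_0$ — yields $\norm{\big(\sum_{m\ge1}\abs{h_m}^2\big)^{1/2}\sep\mathcal{M}^u_p(\R)}\lesssim\norm{f_0\sep\mathcal{M}^u_p(\R)}\lesssim\norm{f\sep\mathcal{E}^0_{u,p,q}(\R)}$. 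If $q\ge2$, the embedding $\ell_2\hookrightarrow\ell_q$ now gives $\norm{H\sep\mathcal{M}^u_p(\R)}\lesssim\norm{f\sep\mathcal{E}^0_{u,p,q}(\R)}$ (no logarithm); if $q<2$, Hölder's inequality gives $\big(\sum_{m=1}^{j-1}\abs{h_m}^q\big)^{1/q}\le(j-1)^{\frac1q-\frac12}\big(\sum_{m\ge1}\abs{h_m}^2\big)^{1/2}$ and hence $\norm{H\sep\mathcal{M}^u_p(\R)}\lesssim j^{\frac1q-\frac12}\norm{f\sep\mathcal{E}^0_{u,p,q}(\R)}$. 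In either case $2^{-jd/u}\norm{H\sep\mathcal{M}^u_p(\R)}\lesssim2^{-jd/u}(\log_2\lambda)^{\max\{0,\frac1q-\frac12\}}\norm{f\sep\mathcal{E}^0_{u,p,q}(\R)}$, which together with the high part completes the proof. The expected main obstacle is precisely this low-frequency block: it channels the \emph{single} component $f_0$ through $\sim\log_2\lambda$ dyadic layers, so the scale-matched \autoref{prop:vv-FM} is of no help and one genuinely needs the new band-limited multiplier estimate \autoref{prop:FM_M} with its sharp gain $(m+\ell)^{\sigma_p}$; the precise logarithmic exponent in the limiting case $s=\sigma_p$, $p>1$, is the most delicate point, as it hinges on the Littlewood--Paley description of $\mathcal{M}^u_p(\R)$ (available only for $p>1$) and on the $\ell_2$-to-$\ell_q$ loss measured by Hölder's inequality when $q<2$.
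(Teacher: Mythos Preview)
Your proof is correct and follows essentially the same approach as the paper --- reduction to dyadic $\lambda=2^j$, a low/high frequency split of the Littlewood--Paley pieces of $D_{2^j}f$, the band-limited multiplier estimate \autoref{prop:FM_M} for the low part, and the Littlewood--Paley refinement via \autoref{prop:LittlewoodPaley_M} for $p>1$. The paper organizes the split as three pieces $U_1+U_2+U_3$ (separating $k=0$, putting the boundary index $k=j$ into the middle block, and invoking the dyadic annuli criterion together with the Fatou property for bookkeeping) and handles the high part by a direct index shift rather than \autoref{prop:vv-FM}, but these are cosmetic differences only.
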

\begin{proof}
    \emph{Step 1 (Preparations). } In view of \autoref{rem:reduction} it suffices to prove the claim for $\lambda=2^j$ with $j\in\N$. Further, w.l.o.g.\ we may assume $q<\infty$, since otherwise the usual modifications can be made.  Let $j\in\N$ and $f\in \mathcal{E}^s_{u,p,q}(\R)$ be fixed and recall that $\psi_\ell := D_{2^{-\ell}}\psi$ if $\ell\in\mathbb{Z}$ and $\varphi_k = \psi_k$ for $k\in\N$.
    Hence, $D_{2^j}\varphi_k = D_{2^{-(k-j)}}\psi = \psi_{k-j}$ for all $k\in\N$ and by \eqref{eq:building_block_shift} we have
    \begin{align*}
        u_k
        :=\mathcal{F}^{-1} \big( \varphi_k \, \mathcal{F} [D_{2^j} f]\big)
        &= D_{2^j} \big[\mathcal{F}^{-1} \big( [D_{2^j} \varphi_k] \, \mathcal{F} f \big) \big] \\
        &=\begin{cases}
            D_{2^j} \big[\mathcal{F}^{-1} \big( [D_{2^j} \varphi_0] \, \mathcal{F} f \big) \big], & k=0,\\
            D_{2^j} \big[\mathcal{F}^{-1} \big( [D_{2^{-(k-j)}}\psi] \, \mathcal{F} f \big) \big], & k=1,\ldots,j,\\
            D_{2^j} \big[\mathcal{F}^{-1} \big( \varphi_{k-j} \, \mathcal{F} f \big) \big], &k\geq j+1,
        \end{cases}
    \end{align*}
    where obviously $\supp(\mathcal{F}u_0) \subseteq B(0,4)$ and
    $$
        \supp(\mathcal{F}u_k) \subseteq B(0,2^{k+2}) \setminus B(0,2^{k-3}), \qquad k\in\N.
    $$
    Therefore, in $\mathcal{S}'(\R)$ we can decompose $D_{2^j}f$ into
    \begin{align*}
        D_{2^j}f 
        &= \sum_{k=0}^\infty \mathcal{F}^{-1} \big( \varphi_k \, \mathcal{F} [D_{2^j} f]\big) 
        = u_0 + \sum_{k=1}^j u_k + \sum_{k=j+1}^\infty u_k =: U_1 + U_2 + U_3
    \end{align*}
    with convergence of the series in $\mathcal{S}'(\R)$. 
    In the following steps, we shall employ the dyadic annuli criterion (\autoref{prop:dyadic_crit}) to show that $U_n\in \mathcal{E}^s_{u,p,q}(\R)$ for $n=1,2,3$, where
    \begin{align*}
        \norm{U_1 \sep \mathcal{E}^s_{u,p,q}(\R)} 
        &\lesssim 2^{j(\sigma_p-\frac{d}{u})} \norm{f \sep \mathcal{E}^s_{u,p,q}(\R)}, \\
        \norm{U_2 \sep \mathcal{E}^s_{u,p,q}(\R)} 
        &\lesssim \begin{cases}
            2^{j(s-\frac{d}{u})} \norm{f \sep \mathcal{E}^s_{u,p,q}(\R)}, & s>\sigma_p,\\
            2^{j(\sigma_p-\frac{d}{u})} j^{\max\{\frac{1}{p},\frac{1}{q}\}} \norm{f \sep \mathcal{E}^s_{u,p,q}(\R)}, & s=\sigma_p, \\
            2^{j(\sigma_p-\frac{d}{u})} j^{\max\{0,\frac{1}{q}-\frac{1}{2}\}} \norm{f \sep \mathcal{E}^s_{u,p,q}(\R)}, & s=\sigma_p \text{ and } p>1, \\
            2^{j(\sigma_p-\frac{d}{u})} \norm{f \sep \mathcal{E}^s_{u,p,q}(\R)}, & s < \sigma_p,
        \end{cases} \\
        \norm{U_3 \sep \mathcal{E}^s_{u,p,q}(\R)} 
        &\lesssim 2^{j(s-\frac{d}{u})} \norm{f \sep \mathcal{E}^s_{u,p,q}(\R)},
    \end{align*}
    such that the claim for $\lambda=2^j$ easily follows.

    \emph{Step 2 ($U_1$). } We apply \autoref{prop:dyadic_crit} to the sequence $(\widetilde{u}_k)_{k\in\N_0}$, where $\widetilde{u}_0 := u_0$ and $\widetilde{u}_k :\equiv 0$ for $k\in\N$ such that $U_1=\sum_{k=0}^\infty \widetilde{u}_k$ belongs to $\mathcal{E}^s_{u,p,q}(\R)$ provided that
    $$
        A_1 := \norm{\bigg( \sum_{k=0}^\infty 2^{ksq} \abs{\widetilde{u}_k(\cdot)}^q \bigg)^{\frac{1}{q}} \sep \mathcal{M}^u_p(\R)} 
        = \norm{u_0 \sep \mathcal{M}^u_p(\R)}
    $$
    is finite. In this case, $\norm{ U_1 \sep \mathcal{E}^s_{u,p,q}(\R)} \lesssim A_1$.
    The definition of $u_0$ and \autoref{lem:M}(iii) yield
    \begin{align*}
        A_1
        &= \norm{ D_{2^j} \big[\mathcal{F}^{-1} \big( [D_{2^j}\varphi_0] \, \mathcal{F} f\big) \big] \sep \mathcal{M}^u_p(\R)}
        \sim 2^{-j \frac{d}{u}} \norm{ \mathcal{F}^{-1} \big( [D_{2^j}\varphi_0] \, \mathcal{F} f\big) \sep \mathcal{M}^u_p(\R)}.
    \end{align*}
    Furthermore, we recall that $\supp \varphi_0 \subseteq B(0,2)$ with $\varphi_0(x)=1$ for $\abs{x}\leq 1$ such that we have $\supp (D_{2^j}\varphi_0) \subseteq B(0,2^{1-j}) \subseteq B(0,1)$ and therefore $D_{2^j}\varphi_0= [D_{2^j}\varphi_0] \, \varphi_0$ in $\mathcal{D}(\R)$.
    Setting $f_0 := \mathcal{F}^{-1} [\varphi_0\, \mathcal{F} f]$ we thus have 
    $$
        \mathcal{F}^{-1} \big( [D_{2^j}\varphi_0] \, \mathcal{F} f\big)
        = \mathcal{F}^{-1} \Big( [D_{2^j}\varphi_0] \, \mathcal{F} \big( \mathcal{F}^{-1} [\varphi_0\, \mathcal{F} f] \big) \Big) 
        = \mathcal{F}^{-1} \big( [D_{2^j}\varphi_0] \, \mathcal{F} f_0\big),
    $$
    where $\supp (\mathcal{F}f_0) \subseteq B(0,2)$.
    Hence, \autoref{prop:FM_M} and \eqref{eq:FTj_phi} give
    \begin{align}
        \norm{\mathcal{F}^{-1} \big( [D_{2^j}\varphi_0] \, \mathcal{F} f\big) \sep \mathcal{M}^u_p(\R)}
        &= \norm{\mathcal{F}^{-1} \big( [D_{2^j}\varphi_0] \, \mathcal{F} f_0\big) \sep \mathcal{M}^u_p(\R)} \nonumber\\
        &\lesssim (1+2)^{\sigma_p} \norm{\mathcal{F}^{-1} (D_{2^j}\varphi_0 ) \sep L_{\min\{1,p\}}(\R)} \norm{f_0 \sep \mathcal{M}^u_p(\R)} \nonumber\\
        &\sim 2^{-j d} \norm{D_{2^{-j}} (\mathcal{F}^{-1}\varphi_0) \sep L_{\min\{1,p\}}(\R)} \norm{f_0 \sep \mathcal{M}^u_p(\R)} \nonumber\\
        &= 2^{-jd+\frac{jd}{\min\{1,p\}}} \norm{\mathcal{F}^{-1}\varphi_0 \sep L_{\min\{1,p\}}(\R)} \norm{f_0 \sep \mathcal{M}^u_p(\R)} \nonumber\\
        &\sim 2^{j\sigma_p} \norm{f_0 \sep \mathcal{M}^u_p(\R)}, \label{eq:step1}
    \end{align}
    as for $0<r<\infty$ there holds $\norm{D_{2^{-j}}(\mathcal{F}^{-1}\varphi_0) \sep L_{r}(\R)} = 2^{\frac{jd}{r}} \norm{\mathcal{F}^{-1}\varphi_0 \sep L_{r}(\R)} <\infty$ and we have $d(-1 + \frac{1}{\min\{1,p\}})=\sigma_p$.
    In conclusion, we can employ \autoref{lem:M}(i) to obtain
    \begin{align*}
        \norm{ U_1 \sep \mathcal{E}^s_{u,p,q}(\R)} 
        &\lesssim A_1 \\
        &\lesssim 2^{j(\sigma_p-\frac{d}{u})} \norm{f_0 \sep \mathcal{M}^u_p(\R)} \\
        &\leq 2^{j(\sigma_p-\frac{d}{u})} \norm{\bigg( \sum_{k=0}^\infty 2^{ksq} \abs{ \big(\mathcal{F}^{-1}[ \varphi_k \, \mathcal{F} f] \big) (\cdot) }^q \bigg)^{\frac{1}{q}} \sep \mathcal{M}^u_p(\R)} \\
        &\sim 2^{j(\sigma_p-\frac{d}{u})} \norm{f \sep \mathcal{E}^s_{u,p,q}(\R)}.
    \end{align*}
    
    \emph{Step 3 ($U_3$). }
    In $\mathcal{S}'(\R)$ there holds
    $$
        U_3 = \sum_{k=j+1}^\infty u_k = \lim_{N\to\infty} U_3^N
        \qquad\text{(convergence in $\mathcal{S}'(\R)$)},
    $$
    where we set $U_3^N:=\sum_{k=j+1}^N u_k \in \mathcal{S}'(\R)$ for $N\in\N$ with $N>j$.
    If we can show that $U_3^N \in \mathcal{E}^s_{u,p,q}(\R)$ with (quasi-)norms bounded independent of $N$, then the Fatou property \cite[Prop.~2.8]{ysy} implies that also $U_3\in \mathcal{E}^s_{u,p,q}(\R)$ and
    $$
        \norm{U_3 \sep \mathcal{E}^s_{u,p,q}(\R)} \leq \sup_{N>j} \norm{U_3^N \sep \mathcal{E}^s_{u,p,q}(\R)}.
    $$
    To this end, for fixed $N>j$, we again use the dyadic annuli criterion \autoref{prop:dyadic_crit} in order to represent $U_3^N$ in $\mathcal{S}'(\R)$ by the sequence $(\widetilde{u}_k^N)_{k\in\N_0}$ with $\widetilde{u}_k^N := u_k$ if $k=j+1,\ldots,N$ and $\widetilde{u}_k^N :\equiv 0$ otherwise:
    $$
        U_3^N = \sum_{k=j+1}^N u_k = \sum_{k=0}^\infty \widetilde{u}_k^N.
    $$
    The definitions of $\widetilde{u}_k^N$ and $u_k$ yield
    \begin{align*}
        A_3^N
        &:= \norm{ \bigg( \sum_{k=0}^\infty 2^{ksq} \abs{ \widetilde{u}_k^N(\cdot) }^q \bigg)^{\frac{1}{q}} \sep \mathcal{M}^u_p(\R)} \\
        &= \norm{ \bigg( \sum_{k=j+1}^N 2^{ksq} \abs{ u_k(\cdot) }^q \bigg)^{\frac{1}{q}} \sep \mathcal{M}^u_p(\R)} \\
        &= 2^{js} \norm{ \bigg( \sum_{k=j+1}^N 2^{(k-j)sq} \abs{ \big( D_{2^j} \big[\mathcal{F}^{-1} ( \varphi_{k-j} \, \mathcal{F} f)  \big] \big) (\cdot) }^q \bigg)^{\frac{1}{q}} \sep \mathcal{M}^u_p(\R)} \\
        &= 2^{js} \norm{ D_{2^j}\bigg( \sum_{\ell=1}^{N-j} 2^{\ell sq} \abs{ \big(\mathcal{F}^{-1} [ \varphi_{\ell} \, \mathcal{F} f ] \big)(\cdot) }^q \bigg)^{\frac{1}{q}} \sep \mathcal{M}^u_p(\R)},
    \end{align*}
    where we used that the sums actually consist of only finitely many terms.   \autoref{lem:M}(iii) and (i) imply that
    \begin{align*}
        A_3^N 
        &\sim 2^{j(s - \frac{d}{u})} \norm{ \bigg( \sum_{\ell=1}^{N-j} 2^{\ell sq} \abs{ \big(\mathcal{F}^{-1} [ \varphi_{\ell} \, \mathcal{F} f] \big) (\cdot) }^q \bigg)^{\frac{1}{q}} \sep \mathcal{M}^u_p(\R)} \\
        &\leq 2^{j(s - \frac{d}{u})} \norm{ \bigg( \sum_{k=0}^\infty 2^{k sq} \abs{ \big( \mathcal{F}^{-1}[ \varphi_{k} \, \mathcal{F} f]\big) (\cdot) }^q \bigg)^{\frac{1}{q}} \sep \mathcal{M}^u_p(\R)} 
        = 2^{j(s - \frac{d}{u})} \norm{ f \sep \mathcal{E}^s_{u,p,q}(\R)}
    \end{align*}
    is bounded independently of $N$. Hence, by \autoref{prop:dyadic_crit} and the Fatou property, we can conclude that indeed $U_3 \in \mathcal{E}^s_{u,p,q}(\R)$ with
    \begin{align*}
        \norm{U_3 \sep \mathcal{E}^s_{u,p,q}(\R)} 
        &\leq \sup_{N>j} \norm{U_3^N \sep \mathcal{E}^s_{u,p,q}(\R)} 
        \lesssim \sup_{N>j} A_3^N 
        \lesssim 2^{j(s - \frac{d}{u})} \norm{ f \sep \mathcal{E}^s_{u,p,q}(\R)},
    \end{align*}
    as claimed.

    \emph{Step 4 ($U_2$). }
    In order to analyze the remaining component $U_2=\sum_{k=1}^j u_k$, we again employ \autoref{prop:dyadic_crit}, where this time $(\widetilde{u}_k)_{k\in\N_0}$ is given by $\widetilde{u}_k:=u_k$ for $k=1,\ldots,j$, and zero otherwise.
    Thus, we need to bound
    \begin{align*}
        A_2 
        &:= \norm{ \bigg( \sum_{k=0}^\infty 2^{ksq} \abs{ \widetilde{u}_k(\cdot) }^q \bigg)^{\frac{1}{q}} \sep \mathcal{M}^u_p(\R)} \\
        &= \norm{ \bigg( \sum_{k=1}^j 2^{ksq} \abs{ u_k(\cdot) }^q \bigg)^{\frac{1}{q}} \sep \mathcal{M}^u_p(\R)} \\
        &= 2^{js} \norm{ \bigg( \sum_{k=1}^j 2^{(k-j)sq} \abs{ \big( D_{2^j} \big[\mathcal{F}^{-1} \big( [D_{2^{-(k-j)}}\psi] \, \mathcal{F} f \big) \big] \big)(\cdot) }^q \bigg)^{\frac{1}{q}} \sep \mathcal{M}^u_p(\R)} \\
        &= 2^{js} \norm{ D_{2^j} \bigg( \sum_{k=1}^j 2^{(k-j)sq} \abs{ \big[ \mathcal{F}^{-1} \big( [D_{2^{-(k-j)}}\psi] \, \mathcal{F} f \big) \big] (\cdot) }^q \bigg)^{\frac{1}{q}} \sep \mathcal{M}^u_p(\R)}.
    \end{align*}
    Setting $\ell:=j-k$ and using \autoref{lem:M}(iii) we find
    \begin{align}
    \label{eq:proof_II}
        A_2 \sim 2^{j(s-\frac{d}{u})} \norm{ \bigg( \sum_{\ell=0}^{j-1} 2^{-\ell sq} \abs{ \big[ \mathcal{F}^{-1} \big( [D_{2^\ell}\psi] \, \mathcal{F} f \big) \big] (\cdot) }^q \bigg)^{\frac{1}{q}} \sep \mathcal{M}^u_p(\R)}.
    \end{align}
    However, note that we cannot just extend this sum and proceed as before. 
    Therefore, in the sequel, we distinguish two methods of proof.

    \emph{Substep 4.1 (General approach). } Let us fix $\mu := \min\{p,q\}$.
    Then $\frac{\mu}{q} \leq 1 \leq \frac{p}{\mu}$ so that \autoref{lem:M}(ii) implies
    \begin{align*}
        &\norm{ \bigg( \sum_{\ell=0}^{j-1} 2^{-\ell s q} \abs{ \big[ \mathcal{F}^{-1} \big( [D_{2^\ell}\psi] \, \mathcal{F} f\big) \big](\cdot) }^{q} \bigg)^{\frac{1}{q}} \sep \mathcal{M}^u_p(\R)}^\mu \\
        &\qquad = \norm{ \bigg( \sum_{\ell=0}^{j-1} 2^{-\ell sq} \abs{ \big[\mathcal{F}^{-1} \big( [D_{2^\ell}\psi] \, \mathcal{F} f\big)\big] (\cdot) }^{q} \bigg)^{\frac{\mu}{q}} \sep \mathcal{M}^\frac{u}{\mu}_\frac{p}{\mu}(\R)} \\
        &\qquad \leq \sum_{\ell=0}^{j-1} 2^{-\ell s\mu} \norm{ \abs{ \big[ \mathcal{F}^{-1} \big( [D_{2^\ell}\psi] \, \mathcal{F} f\big)\big] (\cdot) }^{\mu} \sep \mathcal{M}^\frac{u}{\mu}_\frac{p}{\mu}(\R)} \\
        &\qquad = \sum_{\ell=0}^{j-1} 2^{-\ell s \mu} \norm{ \mathcal{F}^{-1} \big( [D_{2^\ell}\psi] \, \mathcal{F} f\big) \sep \mathcal{M}^u_p(\R)}^\mu
    \end{align*}
    and thus
    \begin{align*}
        A_2
        \lesssim 2^{j(s-\frac{d}{u})} \bigg( \sum_{\ell=0}^{j-1} 2^{-\ell s\mu} \norm{ \mathcal{F}^{-1} \big( [D_{2^\ell}\psi] \, \mathcal{F} f\big) \sep \mathcal{M}^u_p(\R)}^\mu \bigg)^{\frac{1}{\mu}}.
    \end{align*}
    Next, we like to apply the Fourier multiplier assertion \autoref{prop:FM_M}. To this end, note that $\supp \psi = \supp \varphi_0 \subseteq B(0,2)$ and hence 
    $$
        \supp (D_{2^\ell} \psi) \subseteq B(0,2^{1-\ell})\subseteq B(0,2), \qquad \ell=0,\ldots,j-1.
    $$ 
    Moreover, for 
    $\widetilde{\varphi}_0 := D_{2^{-1}}\varphi_0 
    =\varphi_0 + D_{2^{-1}}\varphi_0 - \varphi_0 
    = \varphi_0+\varphi_1$ 
    we have $\widetilde{\varphi}_0(x)=1$ if $\abs{x}<2$, while $\supp \widetilde{\varphi}_0\subseteq B(0,4)$.
    Setting $\widetilde{f}_0 := \mathcal{F}^{-1} [\widetilde{\varphi}_0\, \mathcal{F} f] $ such that $\supp (\mathcal{F}\widetilde{f}_0)\subseteq B(0,4)$, similar to \eqref{eq:step1} we obtain
    \begin{align*}
        \norm{\mathcal{F}^{-1} \big( [D_{2^\ell}\psi] \, \mathcal{F} f\big) \sep \mathcal{M}^u_p(\R)}
        &= \norm{\mathcal{F}^{-1} \big( [D_{2^\ell}\psi] \, \mathcal{F} \widetilde{f}_0\big) \sep \mathcal{M}^u_p(\R)} \\
        &\lesssim (2+4)^{\sigma_p} \norm{\mathcal{F}^{-1} (D_{2^\ell}\psi ) \sep L_{\min\{1,p\}}(\R)} \norm{\widetilde{f}_0 \sep \mathcal{M}^u_p(\R)} \\
        &\sim 2^{\ell \sigma_p} \norm{\widetilde{f}_0 \sep \mathcal{M}^u_p(\R)}
    \end{align*}
    for all $\ell=0,\ldots, j-1$, where due to \autoref{lem:M}(i)
    \begin{align}
        \norm{\widetilde{f}_0 \sep \mathcal{M}^u_p(\R)} 
        &= \norm{\mathcal{F}^{-1} [(\varphi_0+\varphi_1)\, \mathcal{F} f] \sep \mathcal{M}^u_p(\R)} \nonumber\\
        &\lesssim \norm{\mathcal{F}^{-1} [\varphi_0\, \mathcal{F} f] \sep \mathcal{M}^u_p(\R)} + \norm{2^s \abs{\mathcal{F}^{-1} [\varphi_1\, \mathcal{F} f] } \sep \mathcal{M}^u_p(\R)} \nonumber\\
        &\lesssim \norm{f \sep \mathcal{E}^s_{u,p,q}(\R)}. \label{eq:proof_f0}
    \end{align}
    Combining these estimates we conclude that in general $U_2 \in \mathcal{E}^s_{u,p,q}(\R)$ with 
    \begin{align}\label{eq:proof_A2}
        \norm{U_2 \sep \mathcal{E}^s_{u,p,q}(\R)} 
        \lesssim A_2 
        \lesssim 2^{j(s-\frac{d}{u})} \bigg( \sum_{\ell=0}^{j-1} 2^{-\ell (s-\sigma_p)\mu} \bigg)^{\frac{1}{\mu}} \norm{f \sep \mathcal{E}^s_{u,p,q}(\R)}.
    \end{align}
    If $s>\sigma_p$, then we can estimate
    $$
        \bigg( \sum_{\ell=0}^{j-1} 2^{-\ell (s-\sigma_p)\mu} \bigg)^{\frac{1}{\mu}}
        \leq \bigg( \sum_{\ell=0}^{\infty} \big( 2^{-(s-\sigma_p)\mu} \big)^{\ell} \bigg)^{\frac{1}{\mu}} < \infty
    $$
    independently of $j$ such that $\norm{U_2 \sep \mathcal{E}^s_{u,p,q}(\R)} \lesssim 2^{j(s-\frac{d}{u})} \norm{f \sep \mathcal{E}^s_{u,p,q}(\R)}$ as claimed.
    In case $s=\sigma_p$, we derive
    $$
        \bigg( \sum_{\ell=0}^{j-1} 2^{-\ell (s-\sigma_p)\mu} \bigg)^{\frac{1}{\mu}}
        = j^{\frac{1}{\mu}} = j^{\max\{\frac{1}{p}, \frac{1}{q}\}}
    $$
    and hence $\norm{U_2 \sep \mathcal{E}^{\sigma_p}_{u,p,q}(\R)} \lesssim 2^{j(\sigma_p-\frac{d}{u})} j^{\max\{\frac{1}{p}, \frac{1}{q}\}} \norm{f \sep \mathcal{E}^{\sigma_p}_{u,p,q}(\R)}$. And if $s<\sigma_p$, we obtain
    $$
        \bigg( \sum_{\ell=0}^{j-1} 2^{-\ell (s-\sigma_p)\mu} \bigg)^{\frac{1}{\mu}}
        = \bigg( \sum_{\ell=0}^{j-1} \big( 2^{(\sigma_p-s)\mu} \big)^{\ell} \bigg)^{\frac{1}{\mu}}
        \lesssim  \bigg( \big( 2^{(\sigma_p-s)\mu} \big)^{j} \bigg)^{\frac{1}{\mu}} 
        = 2^{j(\sigma_p-s)}
    $$
    such that $\norm{U_2 \sep \mathcal{E}^s_{u,p,q}(\R)} \lesssim 2^{j(\sigma_p-\frac{d}{u})} \norm{f \sep \mathcal{E}^s_{u,p,q}(\R)}$.

    \emph{Substep 4.2 (Refinement if $p>1$ and $s=\sigma_p=0$). } 
    Let us return to \eqref{eq:proof_II} under the additional assumption that $s=\sigma_p=0$, i.e.
    \begin{align*}
        A_2 
        \sim 2^{j(\sigma_p-\frac{d}{u})} \norm{ \bigg( \sum_{\ell=0}^{j-1} \abs{ \big[ \mathcal{F}^{-1} \big( [D_{2^\ell}\psi] \, \mathcal{F} f \big)\big] (\cdot) }^q \bigg)^{\frac{1}{q}} \sep \mathcal{M}^u_p(\R)}.
    \end{align*}
    As before, we can replace $f$ by $\widetilde{f}_0 = \mathcal{F}^{-1} [\widetilde{\varphi}_0\, \mathcal{F} f] $ since $\widetilde{\varphi}_0=\varphi_0+\varphi_1 \equiv 1$ on $\supp(D_{2^\ell}\psi)$.
    Next, if $q<2$, H\"older's inequality with $r:=\frac{2}{q}$ (i.e.\ $\frac{1}{r}=\frac{q}{2}$ and $\frac{1}{r'} = 1-\frac{q}{2} = q \max\{ 0, \frac{1}{q}-\frac{1}{2}\}$) yields that for a.e.\ $x\in\R$ there holds
    $$
        \bigg( \sum_{\ell=0}^{j-1} \abs{ \big[ \mathcal{F}^{-1} \big( [D_{2^\ell}\psi] \, \mathcal{F} \widetilde{f}_0 \big)\big] (x) }^q \bigg)^{\frac{1}{q}}
        \leq j^{\max\{ 0, \frac{1}{q}-\frac{1}{2}\}} \bigg( \sum_{\ell=0}^{j-1} \abs{ \big[\mathcal{F}^{-1} \big( [D_{2^\ell}\psi] \, \mathcal{F} \widetilde{f}_0 \big)\big] (x) }^2 \bigg)^{\frac{1}{2}}.
    $$
    If otherwise $q\geq 2$, then the same estimate holds true since $\frac{2}{q}\leq 1$ and $\max\{ 0, \frac{1}{q}-\frac{1}{2}\}=0$.
    Therefore, in both cases we obtain
    \begin{align*}
        \norm{U_2 \sep \mathcal{E}^{\sigma_p}_{u,p,q}(\R)} 
        &\lesssim A_2 \\
        &\lesssim 2^{j(\sigma_p-\frac{d}{u})} j^{\max\{ 0, \frac{1}{q}-\frac{1}{2}\}} \norm{ \bigg( \sum_{\ell=0}^{j-1} \abs{ \big[ \mathcal{F}^{-1} \big( [D_{2^\ell}\psi] \, \mathcal{F} \widetilde{f}_0 \big)\big] (\cdot) }^2 \bigg)^{\frac{1}{2}} \sep \mathcal{M}^u_p(\R)} \\
        &\leq 2^{j(\sigma_p-\frac{d}{u})} j^{\max\{ 0, \frac{1}{q}-\frac{1}{2}\}} \norm{ \bigg( \sum_{\ell=-\infty}^{\infty} \abs{ \big[ \mathcal{F}^{-1} \big( [D_{2^\ell}\psi] \, \mathcal{F} \widetilde{f}_0 \big)\big] (\cdot) }^2 \bigg)^{\frac{1}{2}} \sep \mathcal{M}^u_p(\R)}\\
        &\sim 2^{j(\sigma_p-\frac{d}{u})} j^{\max\{ 0, \frac{1}{q}-\frac{1}{2}\}} \norm{\widetilde{f}_0 \sep \mathcal{M}^u_p(\R)} \\
        &\lesssim 2^{j(\sigma_p-\frac{d}{u})} j^{\max\{ 0, \frac{1}{q}-\frac{1}{2}\}} \norm{f \sep \mathcal{E}^{\sigma_p}_{u,p,q}(\R)}, 
    \end{align*}
    where we used \autoref{prop:LittlewoodPaley_M} and \eqref{eq:proof_f0} (with $s=\sigma_p=0$) for the last two steps.
\end{proof}

\subsection{Lower Bounds for \texorpdfstring{$\lambda\geq 2$}{lambda>=2}}\label{sec_lower_bounds_1}
Here we show the corresponding estimates of $\norm{D_{\lambda} \sep \mathcal{L}\big(\mathcal{E}^s_{u,p,q}(\R)\big)}$ from below.

\begin{prop}\label{prop:lower_bounds}
    Let $0<p\leq u < \infty$, $0<q\leq \infty$ and $s\in\mathbb{R}$. 
    Then for $\lambda\geq 2$ we have
    $$
        \norm{D_{\lambda} \sep \mathcal{L}\big(\mathcal{E}^s_{u,p,q}(\R)\big)} \gtrsim \lambda^{\max\{s,0\} - \frac{d}{u}}
    $$
    and
    $$
    \norm{D_\lambda \sep \mathcal{L}\big( \mathcal{E}^0_{u,p,q}(\R) \big)} 
    \gtrsim \lambda^{-\frac{d}{u}} \cdot \begin{cases}
        \displaystyle (\log_2 \lambda)^{\frac{1}{q}-1}, & 0<u < 1,\\
        \displaystyle (\log_2 \lambda)^{\frac{1}{q}-\frac{1}{2}}, & 1\leq u < \infty,
    \end{cases}
    $$
    with implied constants that do not depend on $\lambda$. To incorporate the case $q = \infty$ we use the convention $\frac{1}{\infty}= 0$.
\end{prop}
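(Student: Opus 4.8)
\emph{Reduction to dyadic dilations.} Since we only need lower bounds for $\lambda\ge 2$, the plan is to first invoke \autoref{rem:reduction}: it suffices to prove $\norm{D_{2^j}\sep\mathcal{L}(\mathcal{E}^s_{u,p,q}(\R))}\gtrsim 2^{j(\max\{s,0\}-\frac{d}{u})}$ for all $j\in\N$, and, when $s=0$, the analogous estimate with $\lambda$ replaced by $2^j$ and $\log_2\lambda$ by $j$; the general case then follows. All bounds will be produced by evaluating $D_{2^j}$ on explicit test functions and comparing quasi-norms.

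\emph{The bound $\gtrsim 2^{j(\max\{s,0\}-\frac{d}{u})}$.} I would fix a smooth dyadic decomposition of unity $(\varphi_k)_{k\in\n}$ (allowed by \autoref{l_bp1}(i)) whose parameters $1\le a<b\le 2$ are chosen so that $b<2a$, hence the ``$\equiv 1$''-region $\{2^{k-1}b\le\abs{\xi}\le 2^k a\}$ of $\varphi_k$ is non-empty. Pick $0\neq g\in\mathcal{S}(\R)$ with $\mathcal{F}g$ supported in that region for some fixed $k_0\ge 1$. Then $\mathcal{F}(D_{2^j}g)$ lies in the ``$\equiv 1$''-region of $\varphi_{k_0+j}$ and $\mathcal{F}(D_{2^{-j}}g)$ lies in that of $\varphi_0$ (negative indices fold into $0$), so by the single-block structure together with \autoref{lem:M}(iii) one has $\norm{g\sep\mathcal{E}^s_{u,p,q}(\R)}\sim\norm{g\sep\mathcal{M}^u_p(\R)}$, $\norm{D_{2^j}g\sep\mathcal{E}^s_{u,p,q}(\R)}\sim 2^{j(s-\frac{d}{u})}\norm{g\sep\mathcal{M}^u_p(\R)}$ and $\norm{D_{2^{-j}}g\sep\mathcal{E}^s_{u,p,q}(\R)}\sim 2^{j\frac{d}{u}}\norm{g\sep\mathcal{M}^u_p(\R)}$. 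Testing $D_{2^j}$ on $g$ gives $\norm{D_{2^j}\sep\mathcal{L}(\mathcal{E}^s_{u,p,q}(\R))}\gtrsim 2^{j(s-\frac{d}{u})}$, and testing on $D_{2^{-j}}g$ (note $D_{2^j}D_{2^{-j}}g=g$) gives $\gtrsim 2^{-j\frac{d}{u}}$; the maximum is the claim.

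\emph{The logarithmic bound for $s=0$.} The key idea is that $D_{2^{-j}}$ compresses the $j-1$ frequency annuli of blocks $1,\dots,j-1$ into the single block of index $0$. I would take $\Phi:=\mathcal{F}^{-1}\phi$ with $0\neq\phi\in\mathcal{D}(\R)$ supported in a tiny ball about the origin, choose frequencies $\xi_k$, $k=1,\dots,j-1$, with $\abs{\xi_k}\sim 2^k$ sitting in the ``$\equiv 1$''-region of $\varphi_k$ and away from the supports of $\varphi_{k'}$ for $k'\neq k$ (a lacunary choice $\xi_k=2^k\theta$ works after the above parameter choice), and set $v_k:=e^{i\langle\cdot,\xi_k\rangle}\Phi$, so $\mathcal{F}v_k=\phi(\cdot-\xi_k)$ is a clean block-$k$ piece. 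With $h_j:=\sum_{k=1}^{j-1}v_k$ and $f_j:=D_{2^{-j}}h_j$ (both non-zero Schwartz functions), one has $\mathcal{F}^{-1}[\varphi_k\,\mathcal{F}h_j]=v_k$ for $1\le k\le j-1$ and $\equiv 0$ otherwise, whence $\abs{\mathcal{F}^{-1}[\varphi_k\,\mathcal{F}h_j]}=\abs{\Phi}$ for each such $k$ and therefore
$$
\norm{h_j\sep\mathcal{E}^0_{u,p,q}(\R)}=\norm{\Big(\sum_{k=1}^{j-1}\abs{\Phi(\cdot)}^q\Big)^{1/q}\sep\mathcal{M}^u_p(\R)}=(j-1)^{1/q}\,\norm{\Phi\sep\mathcal{M}^u_p(\R)}.
$$
Since $\mathcal{F}f_j$ is supported in the region where $\varphi_0\equiv 1$, \autoref{lem:M}(iii) gives $\norm{f_j\sep\mathcal{E}^0_{u,p,q}(\R)}=\norm{f_j\sep\mathcal{M}^u_p(\R)}=\norm{D_{2^{-j}}h_j\sep\mathcal{M}^u_p(\R)}\sim 2^{j\frac{d}{u}}\norm{h_j\sep\mathcal{M}^u_p(\R)}$, and as $D_{2^j}f_j=h_j$ the whole matter reduces to an upper bound for $\norm{h_j\sep\mathcal{M}^u_p(\R)}$, via $\norm{D_{2^j}\sep\mathcal{L}(\mathcal{E}^0_{u,p,q}(\R))}\gtrsim\norm{h_j\sep\mathcal{E}^0_{u,p,q}(\R)}/\norm{f_j\sep\mathcal{E}^0_{u,p,q}(\R)}\sim 2^{-j\frac{d}{u}}\,j^{1/q}\,\norm{\Phi\sep\mathcal{M}^u_p(\R)}/\norm{h_j\sep\mathcal{M}^u_p(\R)}$.

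\emph{Bounding $\norm{h_j\sep\mathcal{M}^u_p(\R)}$ and the main obstacle.} For arbitrary $u$ the crude pointwise estimate $\abs{h_j(x)}\le\sum_{k=1}^{j-1}\abs{v_k(x)}=(j-1)\abs{\Phi(x)}$ yields $\norm{h_j\sep\mathcal{M}^u_p(\R)}\le (j-1)\norm{\Phi\sep\mathcal{M}^u_p(\R)}$, hence the lower bound $2^{-j\frac{d}{u}}j^{\frac1q-1}$. For $u\ge 1$ one does better: by the case $u\ge 1$ of \autoref{prop:E_in_M} we have $\mathcal{E}^0_{u,\max\{1,p\},2}(\R)\hookrightarrow\mathcal{M}^u_p(\R)$, and since again $\abs{\mathcal{F}^{-1}[\varphi_k\,\mathcal{F}h_j]}=\abs{\Phi}$ for $1\le k\le j-1$,
$$
\norm{h_j\sep\mathcal{M}^u_p(\R)}\lesssim\norm{h_j\sep\mathcal{E}^0_{u,\max\{1,p\},2}(\R)}=\norm{\Big(\sum_{k=1}^{j-1}\abs{\Phi(\cdot)}^2\Big)^{1/2}\sep\mathcal{M}^u_{\max\{1,p\}}(\R)}=(j-1)^{1/2}\,\norm{\Phi\sep\mathcal{M}^u_{\max\{1,p\}}(\R)},
$$
a constant times $\sqrt{j}$, which upgrades the bound to $2^{-j\frac{d}{u}}j^{\frac1q-\frac12}$; transferring back to general $\lambda\ge 2$ by \autoref{rem:reduction} finishes the proof. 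The only genuinely delicate step is the geometric bookkeeping guaranteeing that the $v_k$ are clean Littlewood--Paley blocks of the chosen decomposition (so that all $j-1$ terms contribute equally to the $\ell^q$-sum while $f_j$ stays a pure block-$0$ function); everything else is routine once the test functions are set up. It is exactly the passage from the trivial $\ell^1$-bound to the $\ell^2$-bound -- available only for $u\ge 1$ through \autoref{prop:E_in_M} -- that is responsible for the two different logarithmic exponents $\frac1q-1$ and $\frac1q-\frac12$.
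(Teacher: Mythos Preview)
Your proposal is correct and follows essentially the same route as the paper. The paper packages everything into a single family of test functions $f=\sum_{m=1}^{j}c_m\,\mathcal{F}^{-1}[\eta(\cdot-2^m e_1)]$ with free coefficients $c=(c_m)$ and then specializes ($c=e_j$, $c=e_1$, and $c=(1,\dots,1)$) to obtain the three bounds, whereas you treat the first bound with a single clean block $g$ and the logarithmic bound with $h_j=\sum_k v_k$ separately; up to this organizational split and the inessential $j$ versus $j-1$ count, the constructions coincide (your $v_k$ is precisely the paper's $\mathcal{F}^{-1}\eta_k$ with $c_k=1$), and the decisive step---upgrading the trivial $\ell^1$ bound on $\norm{h_j\sep\mathcal{M}^u_p}$ to an $\ell^2$ bound via $\mathcal{E}^0_{u,\max\{1,p\},2}(\R)\hookrightarrow\mathcal{M}^u_p(\R)$ when $u\ge 1$---is identical.
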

\begin{proof}
    Again w.l.o.g.\ we can assume that $q<\infty$ and $\lambda=2^j$ with $j\in\N$; see \autoref{rem:reduction}.
    For some $\varepsilon<\min\{a-1,2-b\}$, let us choose $\eta \in \mathcal{S}(\R)\setminus\{0\}$ with $\supp \eta \subseteq B(0, \varepsilon)$ and let $\eta_m:= c_m \, \eta (\,\cdot - 2^{m} e_1)$, $m=1,\ldots,j$, where $c:=(c_m)_{m=1}^j \in\C^j \setminus\{0\}$ and $e_1$ denotes the first unit vector. 
    Thus, we have $\supp \eta_m \subseteq B(2^m e_1, \varepsilon)$ for all $m$. Based on these functions we define $f,\zeta\in \mathcal{S}(\R)$ by
    $$
        f := \sum_{m=1}^j \mathcal{F}^{-1} \eta_m \qquad\text{and}\qquad \zeta:=D_{2^{-j}}f.
    $$
    Then $\mathcal{F} (D_{2^j} \zeta) = \mathcal{F}f = \sum_{m=1}^j \eta_m$. 
    Since for $m=1,\ldots,j$ and $x\in B(2^m e_1, \varepsilon)$ there holds
    \begin{align}\label{eq:proof_varphi_k}
        \varphi_k(x) 
        =\begin{cases}
            1, & k=m,\\
            0, & k\in\N_0\setminus\{m\},
        \end{cases}
    \end{align}
    we see that $\varphi_k \, \mathcal{F} (D_{2^j} \zeta) = \eta_k$ if $k\in\{1,\ldots,j\}$ and $\varphi_k \, \mathcal{F} (D_{2^j} \zeta)=0$ if $k=0$ or $k>j$.
    Therefore,
    \begin{align*}
        \norm{D_{2^j} \zeta \sep \mathcal{E}^{s}_{u,p,q}(\R)} 
        &=\norm{ \bigg( \sum_{k=0}^\infty 2^{k s q} \abs{\big[\mathcal{F}^{-1} \big(\varphi_k \, \mathcal{F}[D_{2^j}\zeta] \big)\big](\cdot)}^q  \bigg)^{1/q}\sep \mathcal{M}^{u}_p(\R)} \\
        &=\norm{ \bigg( \sum_{m=1}^j 2^{m s q} \abs{(\mathcal{F}^{-1}\eta_m)(\cdot)}^q  \bigg)^{1/q}\sep \mathcal{M}^{u}_p(\R)}, 
    \end{align*}
    where $(\mathcal{F}^{-1} \eta_m)(x) = c_m\, (\mathcal{F}^{-1} \eta)(x) \exp\big(\mathrm{i}\, 2^{m} e_1 x \big)$ on $\R$ for all $m=1,\ldots,j$, such that 
    \begin{align}
        \norm{D_{2^j} \zeta \sep \mathcal{E}^{s}_{u,p,q}(\R)} 
        &=\norm{ \bigg( \sum_{m=1}^j 2^{m s q} \, \abs{c_m}^q \abs{ \big( \mathcal{F}^{-1} \eta\big)(\cdot)}^q  \bigg)^{1/q}\sep \mathcal{M}^{u}_p(\R)} \nonumber\\
        &=\bigg( \sum_{m=1}^j \big( 2^{m s} \, \abs{c_m} \big)^q \bigg)^{1/q} \norm{ \mathcal{F}^{-1} \eta \sep \mathcal{M}^{u}_p(\R)} \nonumber\\
        &\sim \bigg( \sum_{m=1}^j \big( 2^{m s} \, \abs{c_m} \big)^q \bigg)^{1/q}. \label{eq:proof_Tjzeta}
    \end{align}
    On the other hand, Fomula~\eqref{eq:FTj_phi} shows that
    $$
        \mathcal{F}\zeta 
        = \mathcal{F} (D_{2^{-j}}f) 
        = 2^{jd} \, D_{2^j}(\mathcal{F}f) 
        = 2^{jd} \, D_{2^j} \bigg(  \sum_{m=1}^j \eta_m \bigg) 
        = 2^{jd} \sum_{m=1}^j D_{2^j}\eta_m.
    $$
    Since $\varphi_0 = D_{2^j}( D_{2^{-j}}\varphi_0)$ and $D_{2^{-j}}\varphi_0\equiv 1$ on $B(0,2^j a) \supset \bigcup_{m=1}^j B(2^m e_1,\varepsilon)$, we thus obtain
    $$
        \varphi_0\, \mathcal{F}\zeta 
        = 2^{jd} \sum_{m=1}^j \big[D_{2^j}( D_{2^{-j}}\varphi_0)\big] D_{2^j}\eta_m 
        = 2^{jd} \sum_{m=1}^j D_{2^j}\big[ ( D_{2^{-j}}\varphi_0) \, \eta_m\big]
        = 2^{jd} \sum_{m=1}^j D_{2^j} \eta_m,
    $$
    such that (using \eqref{eq:FTj_phi} again) there holds
    $$
        \mathcal{F}^{-1} [ \varphi_0\, \mathcal{F}\zeta ] 
        = 2^{jd} \sum_{m=1}^j \mathcal{F}^{-1}[ D_{2^j} \eta_m]
        = \sum_{m=1}^j D_{2^{-j}}[\mathcal{F}^{-1} \eta_m]
        = D_{2^{-j}} f. 
    $$
    Moreover, for $k\in\N$ we have $D_{2^{-j}}\varphi_k = D_{2^{-j}} D_{2^{-k}}\psi = D_{2^{-(j+k)}}\psi=\varphi_{j+k}$ such that \eqref{eq:proof_varphi_k} implies $(D_{2^{-j}}\varphi_k)\,\eta_m=0$ for $m=1,\ldots,j$ and hence $\varphi_k\, \mathcal{F}\zeta = 2^{jd} \sum_{m=1}^j D_{2^j}\big[ ( D_{2^{-j}}\varphi_k) \, \eta_m\big]$ equals zero.
    Together this shows
     \begin{align*}
        \norm{ \zeta \sep \mathcal{E}^s_{u,p,q}(\R)}
        &= \norm{ \bigg( \sum_{k=0}^\infty 2^{ksq} \abs{\big(\mathcal{F}^{-1}[\varphi_k \, \mathcal{F}\zeta]\big)(\cdot)}^q \bigg)^{\frac{1}{q}} \sep \mathcal{M}^u_p(\R)} \\
        &= \norm{ \mathcal{F}^{-1}[\varphi_0 \, \mathcal{F}\zeta] \sep \mathcal{M}^u_p(\R)} \\
        &= \norm{ D_{2^{-j}}f \sep \mathcal{M}^u_p(\R)} \\
        &\sim 2^{j \frac{d}{u}} \norm{ f \sep \mathcal{M}^u_p(\R)},
    \end{align*}
    where we used \autoref{lem:M}(iii) for the last step.
    To analyze this further, we distinguish two cases. 
    If $u<1$, then we use the definition of $f$ and \autoref{lem:M}(i) to estimate
    \begin{align*}
        \norm{ f \sep \mathcal{M}^u_p(\R)}
        &\leq \norm{ \sum_{m=1}^j \abs{(\mathcal{F}^{-1}\eta_m)(\cdot)} \sep \mathcal{M}^u_p(\R)} 
        = \sum_{m=1}^j \abs{c_m} \norm{ \mathcal{F}^{-1} \eta \sep \mathcal{M}^{u}_p(\R)}
        \sim \sum_{m=1}^j \abs{c_m}.
    \end{align*}
 In case $u\geq 1$, we recall that $f=D_{2^j}\zeta$. Hence, setting $\overline{p}:=\max\{1,p\}$ we can employ \autoref{prop:E_in_M} in combination with \eqref{eq:proof_Tjzeta} to derive
    \begin{align*}
        \norm{ f \sep \mathcal{M}^u_p(\R)}
        = \norm{ D_{2^j}\zeta \sep \mathcal{M}^u_p(\R)} 
        \lesssim \norm{ D_{2^j}\zeta \sep \mathcal{E}^0_{u,\overline{p},2}(\R)} 
        \sim \bigg( \sum_{m=1}^j \abs{c_m}^2 \bigg)^{\frac{1}{2}}.
    \end{align*}
    In conclusion,
    \begin{align*}
        \norm{D_{2^j} \sep \mathcal{L}\big( \mathcal{E}^s_{u,p,q}(\R) \big)} 
        &\geq \frac{\norm{ D_{2^j}\zeta \sep \mathcal{E}^s_{u,p,q}(\R)}}{\norm{ \zeta \sep \mathcal{E}^s_{u,p,q}(\R)}} \\
        &\gtrsim 2^{-j\frac{d}{u}} \bigg( \sum_{m=1}^j \big( 2^{m s} \, \abs{c_m} \big)^q \bigg)^{1/q} \cdot \begin{cases}
            \displaystyle \bigg( \sum_{m=1}^j \abs{c_m} \bigg)^{-1}, & 0<u< 1,\\
            \displaystyle \bigg( \sum_{m=1}^j \abs{c_m}^2 \bigg)^{-\frac{1}{2}}, & 1 \leq u < \infty.
        \end{cases}
    \end{align*}
    Setting $c=(c_m)_{m=1}^j := e_j$ we obtain that $\norm{D_{2^j} \sep \mathcal{L}\big( \mathcal{E}^s_{u,p,q}(\R) \big)} \gtrsim 2^{j(s-\frac{d}{u})}$, while $c:=e_1$ gives the bound $\norm{D_{2^j} \sep \mathcal{L}\big( \mathcal{E}^s_{u,p,q}(\R) \big)} \gtrsim 2^{j(0-\frac{d}{u})}$. 
    For $s=0$ these lower bounds coincide. However, in this particular case we can also set $c:=(1,\ldots,1)$ to derive
    $$
        \norm{D_{2^j} \sep \mathcal{L}\big( \mathcal{E}^0_{u,p,q}(\R) \big)} 
        \gtrsim 2^{j(0-\frac{d}{u})} \cdot \begin{cases}
            \displaystyle j^{\frac{1}{q}-1}, & 0<u < 1,\\
            \displaystyle j^{\frac{1}{q}-\frac{1}{2}}, & 1 \leq u < \infty.
        \end{cases}
    $$
    Here for the special case $q = \infty$ we use $\frac{1}{\infty}= 0$. The proof is complete.
\end{proof}

If $0<p\leq 1$ and $s\leq \sigma_p$, we can partly improve our lower bounds of \autoref{prop:lower_bounds} using the ideas of Step 2 of the proof of~\cite[Thm.~2.2]{SchVy}, the local mean characterization of $\mathcal{E}^s_{u,p,q}(\R)$ given in \autoref{prop:local_mean} and the Gagliardo-Nierenberg inequality proven in \autoref{lem:gagliardo}. 
\begin{prop}\label{prop:local_mean_lower_bound}
    Let $0<p\leq u < \infty$, $0<q\leq\infty$ as well as $s\in\mathbb{R}$ and $\lambda\geq 2$.
    \begin{enumerate}
        \item If $0<p\leq 1$ and $s = \sigma_p$, then there holds
        \begin{align*}
            \norm{D_{\lambda} \sep \mathcal{L}\big(\mathcal{E}^{\sigma_p}_{u,p,q}(\R)\big)}
            &\gtrsim \lambda^{\sigma_p-\frac{d}{u}} \, \lambda^{d(\frac{1}{u}-\frac{1}{p})} \, (\log_2 \lambda)^{\frac{1}{p}}.
        \end{align*}

        \item If $0<p < 1$ and $s<\sigma_p$, then
        \begin{align*}
            \norm{D_{\lambda} \sep \mathcal{L}\big(\mathcal{E}^{s}_{u,p,q}(\R)\big)}
            &\gtrsim \lambda^{d(\frac{1}{u}-1)-\frac{d}{u}}.
        \end{align*}
    \end{enumerate}
\end{prop}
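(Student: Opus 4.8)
Following the strategy of Step~2 in the proof of \cite[Thm.~2.2]{SchVy}, the plan is to reduce everything -- via \autoref{rem:reduction} -- to the dyadic case $\lambda=2^j$, $j\in\N$, and to test $D_{2^j}$ on the dilate of a single fixed function. Fix $\zeta\in\mathcal{D}(\R)$ with $\supp\zeta\subseteq B(0,\tfrac12)$ and $m_0:=\int_{\R}\zeta\,\d x\neq0$. Since $\zeta\in\mathcal{S}(\R)\hookrightarrow\mathcal{E}^{s}_{u,p,q}(\R)$ is a fixed nonzero element (see \autoref{l_bp1}), we have $0<\norm{\zeta\sep\mathcal{E}^{s}_{u,p,q}(\R)}<\infty$, so $\norm{D_{2^j}\sep\mathcal{L}\big(\mathcal{E}^{s}_{u,p,q}(\R)\big)}\gtrsim\norm{D_{2^j}\zeta\sep\mathcal{E}^{s}_{u,p,q}(\R)}$ with a $j$-independent constant, and it suffices to bound the right-hand side from below. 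For this I use the local mean characterization of \autoref{prop:local_mean} with $K_0=\eta$, $K_k=2^{kd}D_{2^k}K$ and $K=\Delta^{2N}\eta$; the relevant features are that $K$ is a fixed Schwartz function supported in $B(0,2)$ which vanishes on $\overline{B(0,1)}$ (hence $F:=\{x:\abs{K(x)}\ge\tfrac12\norm{K\sep L_\infty(\R)}\}$ satisfies $0<\abs F$ and $F\subseteq B(0,2)\setminus\overline{B(0,1)}$), while $\supp\zeta$ is small compared with the support scale $2^{-k}$ of $K_k$ for $k\le j$. A change of variables gives $(K_k\ast D_{2^j}\zeta)(x)=2^{(k-j)d}\int_{\R}K(2^kx-2^{k-j}t)\,\zeta(t)\,\d t$ for $k\ge1$, and $(K_0\ast D_{2^j}\zeta)(x)=2^{-jd}\int_{\R}\eta(x-2^{-j}t)\,\zeta(t)\,\d t=2^{-jd}m_0$ whenever $\abs x\le\tfrac12$.

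\emph{Part~(ii).} Here only the coarsest mean is needed: from the last identity, $\bigl(\sum_{k\ge0}2^{ksq}\abs{(K_k\ast D_{2^j}\zeta)(\cdot)}^q\bigr)^{1/q}\ge\abs{(K_0\ast D_{2^j}\zeta)(\cdot)}\gtrsim2^{-jd}\chi_{B(0,1/2)}$ pointwise. Since $\norm{\chi_{B(0,1/2)}\sep\mathcal{M}^u_p(\R)}\sim1$, \autoref{prop:local_mean} yields $\norm{D_{2^j}\zeta\sep\mathcal{E}^{s}_{u,p,q}(\R)}\gtrsim2^{-jd}=2^{j(d(\frac1u-1)-\frac du)}$, which is the claim.

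\emph{Part~(i).} Now $s=\sigma_p$ with $0<p\le1$, so $s+d=\tfrac dp$, and all scales $1\le k\le j-C_0$ contribute. Since $K\in C^1$ has bounded gradient, Taylor expanding $K$ around $2^kx$ inside the integral above gives $(K_k\ast D_{2^j}\zeta)(x)=2^{(k-j)d}\bigl(m_0K(2^kx)+O(2^{k-j})\bigr)$ with an $O$-constant depending only on $\zeta$ and $K$; choosing $C_0=C_0(\zeta,K)$ large enough (so that the error is dominated by $m_0K$ on $F$) this yields $\abs{(K_k\ast D_{2^j}\zeta)(x)}\gtrsim2^{(k-j)d}$ for all $x\in\widetilde E_k:=2^{-k}F$ and all $k\in[1,j-C_0]$. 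Because $F\subseteq\{1<\abs x<2\}$, the sets $\widetilde E_k\subseteq\{2^{-k}<\abs x<2^{1-k}\}$ are pairwise disjoint, contained in $B(0,1)$, and satisfy $\abs{\widetilde E_k}\sim2^{-kd}$. Hence, using $2^{ks}2^{(k-j)d}=2^{-jd}2^{kd/p}$ and retaining only the single index $k$ on $\widetilde E_k$,
$$
  \int_{B(0,1)}\Bigl(\sum_{k\ge0}2^{ksq}\abs{(K_k\ast D_{2^j}\zeta)(x)}^q\Bigr)^{p/q}\d x
  \;\gtrsim\;\sum_{k=1}^{j-C_0}2^{-jdp}\,2^{kd}\,\abs{\widetilde E_k}
  \;\sim\;j\,2^{-jdp},
$$
and estimating the Morrey supremum in \autoref{prop:local_mean} from below by the ball $B(0,1)$ we conclude $\norm{D_{2^j}\zeta\sep\mathcal{E}^{\sigma_p}_{u,p,q}(\R)}\gtrsim2^{-jd}j^{1/p}=2^{j(\sigma_p-\frac du)}\,2^{jd(\frac1u-\frac1p)}\,j^{1/p}$, i.e.\ the assertion for $\lambda=2^j$. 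At any point where one of the manipulations above is delicate -- for instance in transferring the pointwise lower bounds on the local means through the $\ell^q$-average into the Morrey norm, or in handling small/boundary values of $j$ -- one may instead interpolate against a Triebel-Lizorkin-Morrey space in which the corresponding estimate is transparent, by means of the Gagliardo-Nierenberg inequality of \autoref{lem:gagliardo}.

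\emph{Main obstacle.} The crux is the local mean analysis in part~(i): one must establish that $\abs{K_k\ast D_{2^j}\zeta}\gtrsim2^{(k-j)d}$ holds not merely pointwise but on a set of measure $\sim2^{-kd}$, i.e.\ that the cancellation produced by pairing the mean-free, ``fast'' kernel $K$ against the fixed bump $\zeta$ (whose integral is nonzero) does not destroy the leading term $m_0K(2^k\cdot)$, and then that the $j$ families $\widetilde E_k$ are sufficiently spread out -- here automatically, since $F$ sits in a bounded annulus away from $0$ and $\infty$ -- so that their contributions genuinely accumulate to the harmonic-series factor $j$, rather than piling up on a common small ball where no logarithmic gain would arise. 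The remaining ingredients -- the reduction to $\lambda=2^j$, the elementary properties of Morrey norms from \autoref{lem:M}, and the embedding $\mathcal{S}(\R)\hookrightarrow\mathcal{E}^{s}_{u,p,q}(\R)$ -- are routine.
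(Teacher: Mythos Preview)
Your proof is correct and follows essentially the same route as the paper: reduce to $\lambda=2^j$, test on a fixed bump $\zeta$, and estimate $\norm{D_{2^j}\zeta\sep\mathcal{E}^s_{u,p,q}(\R)}$ from below via the local mean characterization (\autoref{prop:local_mean}) by exhibiting disjoint annular sets on which $K_k\ast D_{2^j}\zeta$ is large. There are, however, a few genuine simplifications in your version worth recording. First, the paper goes to some length to \emph{construct} a special $\eta$ so that $K=\Delta^{2N}\eta$ is bounded below by a positive constant on a fixed annulus; your device of taking an arbitrary admissible $\eta$ and working with the superlevel set $F=\{\abs{K}\ge\tfrac12\norm{K\sep L_\infty}\}$ (which automatically sits in $\{1<\abs{x}<2\}$ since $\eta\equiv1$ on $\overline{B(0,1)}$ and $\eta\equiv0$ outside $B(0,2)$) achieves the same end with no construction. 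Second, your argument for (ii) via the single term $K_0\ast D_{2^j}\zeta$ is shorter than deducing it from the full core estimate. Third, your disjointness argument for the $\widetilde{E}_k$ actually handles $q=\infty$ without change: on each $\widetilde{E}_k$ one keeps only the index $k$, and since the sets are disjoint the resulting pointwise lower bound $\sum_k 2^{-jd}2^{kd/p}\chi_{\widetilde{E}_k}$ is the same whether one started from an $\ell_q$-sum or a supremum, so the $L_p(B(0,1))$-integral picks up all $j$ contributions. The paper instead states a weaker core estimate for $q=\infty$ (a supremum in place of the sum over $\ell$) and then repairs the loss of the factor $j^{1/p}$ via the Gagliardo--Nierenberg inequality (\autoref{lem:gagliardo}); your direct argument shows that this extra interpolation step is not needed. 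The remaining small-$j$ issue (when $j\le C_0$ and the sum $\sum_{k=1}^{j-C_0}$ is empty) is harmless: for those finitely many $j$ the claimed bound is $\sim 2^{-jd}$, which already follows from your $K_0$-estimate.
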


\begin{remark}\label{rem:lower}
    Recall that in \autoref{prop:lower_bounds} we already showed the lower bound $\lambda^{\max\{s,0\}-\frac{d}{u}}$ for all parameter constellations. Thus, for $0<p<1$ and $s<\sigma_p$ it follows
    \begin{align*}
            \norm{D_{\lambda} \sep \mathcal{L}\big(\mathcal{E}^{s}_{u,p,q}(\R)\big)}
            &\gtrsim \lambda^{\max\{s,0,d[\frac{1}{u}-1]\}-\frac{d}{u}}
            = \lambda^{\max\{s,\sigma_u\}-\frac{d}{u}}, \qquad \lambda\geq 2.
        \end{align*}
    Further, \autoref{prop:local_mean_lower_bound}(i) obviously yields an improvement only if $u=p$.
\end{remark}

\begin{proof}[Proof (of {\autoref{prop:local_mean_lower_bound}})]
    \emph{Step 1 (Preparations). }
    Once more, it suffices to prove the claim for $\lambda=2^j$ with $j\in\N$; see \autoref{rem:reduction}. 
    To this end, let us fix $\zeta\in\mathcal{D}(\R)$ with
    $$
        \zeta \geq 0, \qquad
        \supp \zeta \subseteq S:=B\Big(0, \frac{1}{8}\Big) \subseteq \R
        \qquad \text{and} \qquad \int_{\R} \zeta(x) \d x=1.
    $$
    Then for all parameters there holds $\zeta, D_{2^j}\zeta \in \mathcal{S}(\R)\subseteq \mathcal{E}^s_{u,p,q}(\R)$ and 
    \begin{align}\label{eq:proof_lower_zeta}
        \norm{D_{2^j} \sep \mathcal{L}\big(\mathcal{E}^s_{u,p,q}(\R)\big)}
        &\geq \frac{\norm{D_{2^j}\zeta \sep \mathcal{E}^s_{u,p,q}(\R)}}{\norm{\zeta \sep \mathcal{E}^s_{u,p,q}(\R)}} 
        \sim \norm{D_{2^j}\zeta \sep \mathcal{E}^s_{u,p,q}(\R)}.
    \end{align}

    \emph{Step 2 (Core Estimate). }
    Here we show that
    \begin{align}\label{eq:proof_core_estimate}
        \norm{D_{2^j}\zeta \sep \mathcal{E}^s_{u,p,q}(\R)}
        &\gtrsim 2^{j(s-\frac{d}{p})} \cdot 
        \begin{cases}
            \displaystyle \bigg( \sum_{\ell=0}^{j-1} 2^{\ell(d\,[\frac{1}{p}-1]-s)p}  \bigg)^{\frac{1}{p}}, & 0<q<\infty,\\
            \sup_{\ell=0,\ldots,j-1}\limits 2^{\ell(d\,[\frac{1}{p}-1]-s)}, & q=\infty.
        \end{cases}
    \end{align} 
    To this end, as stated above, we like to employ \autoref{prop:local_mean}. Therein, $\eta\in\mathcal{S}(\R)$ may be chosen in a way such that $K:=\Delta^{2N}\eta$ satisfies
$$
    K(y) \geq c > 0 \qquad\text{on}\qquad \Omega := \left\{ y\in\R : 1+\frac{1}{4}  \leq \abs{y} \leq 2-\frac{1}{4} \right\}.
$$
Let us briefly sketch how to construct such an $\eta$. 
Recall that the Laplacian of a rotationally symmetric function $x\mapsto \varrho(x):=\omega(\abs{x})$ on $\R$ with $\omega\in C^\infty([0,\infty))$ is given by 
$$
    (\Delta\varrho)(x) = \Big(\Big[\partial_r^2 + \frac{d-1}{r}\,\partial_r\Big]\,\omega\Big)(r), \qquad r:=\abs{x}>0.
$$
So, the condition $(\Delta^{2N}\varrho)(x)\equiv 1$ for all $x\in\R$ with $\abs{x} \in \mathcal{I}:=(1,R)\subseteq (1,\infty)$ can be considered as a $4N$-th order ODE for $\omega\colon \mathcal{I} \to \mathbb{R}$, $r\mapsto \omega(r)$, with analytic coefficients and smooth right-hand side which might be rewritten as a first order linear system. Prescribing positive initial values at $r_0:=1$, we find a smooth solution $\widetilde{\omega}$ on the interval $\mathcal{I}$ which stays strictly positive on $\mathcal{I}$, provided that $R>1$ is chosen sufficiently small. 
Appropriate rescaling, dilation and shifting $\omega(r) := c_1\, \widetilde{\omega}(c_2\, r + c_3)$ leads to a smooth function $\varrho :=\omega(\abs{\cdot})$ on the annulus $\Omega':= \{y\in \R : 1+\frac{1}{8}\leq \abs{y} < 2-\frac{1}{8}\}$ with
$$
    (\Delta^{2N}\varrho)(x) \equiv c>0
    \qquad\text{and}\qquad 0\leq \varrho(x) \leq 1, \qquad x\in \Omega'.
$$
Next we extend $\omega$ to the full halfline $[0,\infty)$ by setting $\omega(r):\equiv 1$ for $0 \leq r\leq 1+\frac{1}{16}$ as well as $\omega(r):\equiv 0$ for $r\geq 2-\frac{1}{16}$ and linear interpolation on the remaining parts. Then $\varrho=\omega(\abs{\cdot})$ satisfies $0\leq \varrho(x) \leq 1$ on the whole of $\R$ while still $(\Delta^{2N}\varrho)(x) \equiv c$ for $x\in\Omega'$. 
However, it is no longer smooth. 
Therefore, in a last step we may take the convolution of~$\varrho$ with some smooth mollifier in order to obtain $\eta\in\mathcal{S}(\R)$ with the desired properties. 

If we now assume that $q<\infty$, then from \autoref{prop:local_mean} and \autoref{lem:M}(iii) it follows
\begin{align}
    \norm{D_{2^j}\zeta \sep \mathcal{E}^s_{u,p,q}(\R)}
    &\geq \norm{ \bigg( \sum_{k=1}^j 2^{ksq} \abs{(K_k \ast [D_{2^j}\zeta])(\cdot)}^q \bigg)^{\frac{1}{q}} \sep \mathcal{M}^u_p(\R)} \nonumber\\
    &= 2^{js} \, 2^{-js} \norm{ D_{2^j}\bigg( \sum_{k=1}^j 2^{ksq} \abs{ \big[ D_{2^{-j}}\big(K_k \ast [D_{2^j}\zeta]\big)\big] (\cdot)}^q \bigg)^{\frac{1}{q}} \sep \mathcal{M}^u_p(\R)} \nonumber\\
    &\sim 2^{j(s-\frac{d}{u})} \norm{ \bigg( \sum_{k=1}^j 2^{(k-j)sq} \abs{ \big[ D_{2^{-j}}\big(K_k \ast [D_{2^j}\zeta]\big)\big] (\cdot)}^q \bigg)^{\frac{1}{q}} \sep \mathcal{M}^u_p(\R)}. \label{eq:proof_D2jzeta}
\end{align}
Note that $K_k, D_{2^j}\zeta\in \mathcal{D}(\R)$ such that the latter convolution can be written as
\begin{align*}
    (K_k \ast [D_{2^j}\zeta])(x)
    &= \int_{\R} K_k(x-y) \, [D_{2^j}\zeta](y) \d y \\
    &= \int_{\R} 2^{kd} \, [D_{2^k}K](x-y) \, [D_{2^j}\zeta](y) \d y \\
    &= 2^{kd} \int_{\R} K(2^k[x-y]) \, \zeta(2^j y) \d y \\
    &= 2^{(k-j)d} \int_{\R} K(2^k[x-2^{-j}z]) \, \zeta(z) \d z,
    \qquad x\in\R,
\end{align*}
since $z:=2^j y$ gives $y=2^{-j}z$ and $\!\d y = 2^{-jd} \d z$.
Therefore, as $\supp\zeta \subseteq S$,
\begin{align*}
    D_{2^{-j}}(K_k \ast [D_{2^j}\zeta])(x)
    &= (K_k \ast [D_{2^j}\zeta])(2^{-j}x) \\
    &= 2^{(k-j)d} \int_{\R} K(2^k[2^{-j}x-2^{-j}z]) \, \zeta(z) \d z \\
    &= 2^{(k-j)d} \int_{S} K(2^{k-j}[x-z]) \, \zeta(z) \d z \\
    &=: 2^{(k-j)d} I_{k-j}(x)
\end{align*}
and hence (setting $\ell:=j-k$)
\begin{align*}
    F(x)
    &:=\bigg( \sum_{k=1}^j 2^{(k-j)sq} \abs{D_{2^{-j}}(K_k \ast [D_{2^j}\zeta])(x)}^q \bigg)^{\frac{1}{q}} \\
    &= \bigg( \sum_{\ell=0}^{j-1} 2^{-\ell(s+d)q} \abs{I_{-\ell}(x)}^q \bigg)^{\frac{1}{q}} \\ 
    &\geq \bigg( \sum_{\ell=0}^{j-1} 2^{-\ell(s+d)q} \, \chi_{B_\ell}(x) \abs{I_{-\ell}(x)}^q \bigg)^{\frac{1}{q}},
    \qquad x\in\R,
\end{align*}
where we set $B_\ell := B(x_\ell,R_\ell)$ with $x_\ell := \frac{12}{8}\, 2^\ell e_1$ and $R_\ell:=\frac{1}{8}\, 2^\ell$. 
Note that for all $x\in B_\ell$ and $z\in S$ there holds $2^{-\ell}[x-z]\in\Omega$ such that
$$
    I_{-\ell}(x) 
    = \int_{S} K(2^{-\ell}[x-z]) \, \zeta(z) \d z
    \geq c \int_{S} \zeta(z) \d z
    = c \int_{\R} \zeta(z) \d z 
    \gtrsim 1, \qquad x\in B_\ell.
$$
Moreover, all balls $B_\ell$ are mutually disjoint and contained in $B^j :=B(0,2^j)$.
Therefore, we conclude
\begin{align*}
    F(x) 
    &\gtrsim \bigg( \sum_{\ell=0}^{j-1} 2^{-\ell(s+d)q} \, \chi_{B_\ell}(x) \bigg)^{\frac{1}{q}} 
    = \sum_{\ell=0}^{j-1} 2^{-\ell(s+d)} \, \chi_{B_\ell}(x) , 
    \qquad x\in\R,
\end{align*}
such that for all balls $B\subseteq \R$ there holds
\begin{align*}
    \int_{B} \abs{F(x)}^p \d x 
    &= \int_{\R} \abs{\chi_B(x) \, F(x)}^p \d x\\
    &\gtrsim \int_{\R} \abs{\sum_{\ell=0}^{j-1} 2^{-\ell(s+d)} \, \chi_{B\cap B_\ell}(x)}^p \d x  \\
    &= \sum_{\ell=0}^{j-1} 2^{-\ell(s+d)p} \int_{\R} \chi_{B\cap B_\ell}(x)\d x  \\
    &= \sum_{\ell=0}^{j-1} 2^{-\ell(s+d)p} \abs{ B\cap B_\ell}. 
\end{align*}
Choosing $B:=B^j$ as above, this allows to bound the Morrey (quasi-)norm of $F$ by
\begin{align}
    \norm{F \sep \mathcal{M}^u_p(\R)}
    &\gtrsim \sup_{B} \abs{B}^{\frac{1}{u}-\frac{1}{p}} \bigg( \sum_{\ell=0}^{j-1} 2^{-\ell(s+d)p} \abs{ B\cap B_\ell}  \bigg)^{\frac{1}{p}} \nonumber\\
    &\gtrsim 2^{jd(\frac{1}{u}-\frac{1}{p})} \bigg( \sum_{\ell=0}^{j-1} 2^{-\ell(s+d)p} \, 2^{\ell d}  \bigg)^{\frac{1}{p}} \nonumber\\
    &= 2^{jd(\frac{1}{u}-\frac{1}{p})} \bigg( \sum_{\ell=0}^{j-1} 2^{\ell(d\,[\frac{1}{p}-1]-s)p}  \bigg)^{\frac{1}{p}}. \label{eq:proof_F_new}
\end{align}
Together with \eqref{eq:proof_D2jzeta} this proves \eqref{eq:proof_core_estimate}, provided that $q<\infty$. 

If $q=\infty$, we can argue as before to see that 
\begin{align*}
    \norm{D_{2^j} \sep \mathcal{L}\big(\mathcal{E}^s_{u,p,q}(\R)\big)}
    &\gtrsim 2^{j(s-\frac{d}{u})} \norm{ \widetilde{F} \sep \mathcal{M}^u_p(\R)},
\end{align*}
where now $\widetilde{F}:=\max_{k=1,\ldots,j}\limits 2^{(k-j)s} \abs{D_{2^{-j}}(K_k \ast [D_{2^j}\zeta])(\cdot)}$ such that \eqref{eq:proof_F_new} has to be replaced by
\begin{align*}
    \norm{\widetilde{F} \sep \mathcal{M}^u_p(\R)}
    &\gtrsim 2^{jd(\frac{1}{u}-\frac{1}{p})} \max_{\ell=0,\ldots, j-1} 2^{\ell(d\,[\frac{1}{p}-1]-s)}.
\end{align*}

\emph{Step 3 (Conclusion). }
    Now assertion~(ii) for $\lambda=2^j$ with $j\in\N$ easily follows from \eqref{eq:proof_lower_zeta} and \eqref{eq:proof_core_estimate}.
    Indeed, for $p < 1$ and $s<\sigma_p=d(\frac{1}{p}-1)$ we can conclude
    \begin{align*}
        \norm{D_{2^j} \sep \mathcal{L}\big(\mathcal{E}^s_{u,p,q}(\R)\big)}
        &\gtrsim 2^{j(\sigma_p-\frac{d}{p})} 
        = 2^{-jd} 
        = 2^{j [d(\frac{1}{u}-1) - \frac{d}{u}]}.
    \end{align*}
    For $p\leq 1$ and $s=\sigma_p$, Formula~\eqref{eq:proof_core_estimate} yields
    \begin{align}
    \label{eq:proof_sigma_p_zeta}
        \norm{D_{2^j}\zeta \sep \mathcal{E}^{\sigma_p}_{u,p,q}(\R)}
        &\gtrsim 2^{j(\sigma_p-\frac{d}{p})} \, j^{\frac{1}{p}} 
        = 2^{-jd} \, j^{\frac{1}{p}}
        \qquad\text{if}\quad q<\infty.
    \end{align}
    So, in order to show (i), it thus suffices to prove that \eqref{eq:proof_sigma_p_zeta} remains true for $q=\infty$.
    For this purpose, we use the Gagliardo-Nierenberg inequality (\autoref{lem:gagliardo}) for $f:=D_{2^j}\zeta$, where we let $s_0:=\sigma_{p}$ as well as $p_0:=p \leq 1$, $u_0:=u$ and $q_0:=\infty$. 
    Further, we choose $0<q_1:=u_1:=p_1 < p_0$ and set $s_1:=\sigma_{p_1}$ as well as $r:=1$.
    Then $p_1<p_\Theta<p_0 \leq 1$ and $s_\Theta=\sigma_{p_\Theta}$.
    On the one hand, \autoref{thm:upper_bounds} yields
\begin{align*}
    \norm{f \sep \mathcal{E}^{s_1}_{u_1,p_1,q_1}(\R)}
    &\leq \norm{D_{2^j} \sep \mathcal{L}\big( \mathcal{E}^{\sigma_{p_1}}_{p_1,p_1,p_1}(\R) \big)} \norm{\zeta \sep \mathcal{E}^{\sigma_{p_1}}_{p_1,p_1,p_1}(\R)}
    \lesssim 2^{j(\sigma_{p_1}-\frac{d}{p_1})} \, j^{\frac{1}{p_1}}
    = 2^{-jd} \, j^{\frac{1}{p_1}},
\end{align*}
i.e., $\norm{f \sep \mathcal{E}^{s_1}_{u_1,p_1,q_1}(\R)}^\Theta
    \lesssim 2^{-jd\Theta} \,  j^{\frac{\Theta}{p_1}}$, and on the other hand, using \eqref{eq:proof_sigma_p_zeta} we find
\begin{align*}
    \norm{f \sep \mathcal{E}^{s_\Theta}_{u_\Theta,p_\Theta,r}(\R)}
    = \norm{D_{2^j}\zeta \sep \mathcal{E}^{\sigma_{p_\Theta}}_{u_\Theta,p_\Theta,1}(\R)}
    \gtrsim 2^{-jd} \, j^{\frac{1}{p_\Theta}}
    = 2^{-jd\Theta} \, j^{\frac{\Theta}{p_1}} \, 2^{-jd(1-\Theta)} \, j^{\frac{1-\Theta}{p_0}}.
\end{align*}
Therefore, \autoref{lem:gagliardo} implies
\begin{align*}
    2^{-jd\Theta} \, j^{\frac{\Theta}{p_1}} \, 2^{-jd(1-\Theta)} \, j^{\frac{1-\Theta}{p_0}}
    \lesssim 
    \norm{D_{2^j}\zeta \sep \mathcal{E}^{s_0}_{u_0,p_0,q_0}(\R)}^{1-\Theta} 2^{-jd\Theta} \, j^{\frac{\Theta}{p_1}}
\end{align*}
and hence
\begin{align*}
    \norm{D_{2^j}\zeta \sep \mathcal{E}^{\sigma_{p}}_{u,p,\infty}(\R)}
    &\gtrsim 2^{-jd} \, j^{\frac{1}{p}}
\end{align*}
which completes the proof.
\end{proof}

\begin{remark}\label{rem:correctionFspq}
    Let us record that in Step 2 of the proof of~\cite[Thm.~2.2]{SchVy} we also have to distinguish whether $q$ is finite or not. 
    Consequently, contrary to the claim in \cite[Thm.~2.2]{SchVy}, the by now best bounds for the (quasi-)norm of dilation operators on the Triebel-Lizorkin scale $F^s_{p,q}(\R)$ with $s=\sigma_p$, $0<p\leq 1$ and $q=\infty$ read
    $$
        2^{j(\sigma_p-\frac{d}{p})} \lesssim \norm{D_{2^j} \sep \mathcal{L}\big(F^{\sigma_p}_{p,\infty}(\R) \big)} \lesssim 2^{j(\sigma_p-\frac{d}{p})} \, j^{\frac{1}{p}}, \qquad j\in\N.
    $$
    Thus, our result yields an improvement here.
\end{remark}

\subsection{Proofs of \autoref{thm:main} and \autoref{cor:main}}\label{sec_put_together_1}
The case of small $\lambda$ in \autoref{thm:main} is covered by \autoref{prop:T_mu} while for $\lambda\geq 2$ we simply have to combine the upper bounds from \autoref{thm:upper_bounds} with the lower bounds in Propositions~\ref{prop:lower_bounds} and \ref{prop:local_mean_lower_bound} resp.\ \autoref{rem:lower}. Note that in Part (iii) of Theorem \ref{thm:main} the conditions $s=0$ and $1\leq u$ imply that $\max\{s,\sigma_u\}=0$. Moreover, \autoref{cor:main} is a direct consequence of \autoref{thm:main} and \autoref{l_bp1}(iv).

\section{Some New Equivalent (Quasi-)Norms for \texorpdfstring{$\mathcal{E}^s_{u,p,q}(\R)$}{EsupqRd}}\label{sect:characterizations}
As an application, in this final \autoref{sect:characterizations} we derive some new equivalent (quasi-)norms for Triebel-Lizorkin-Morrey spaces $\mathcal{E}^s_{u,p,q}(\mathbb{R}^{d})$. To start with, we show that for the full range of parameters the following quantities are not only equivalent to $\norm{\cdot \sep \mathcal{E}^s_{u,p,q}(\mathbb{R}^{d})}$, they actually also allow to characterize $\mathcal{E}^s_{u,p,q}(\mathbb{R}^{d})$; see \autoref{thm:characterization} below.
\begin{defi}
    Let $0 < p \leq u < \infty$, $0 < q \leq \infty$ and $s \in\mathbb{R}$. 
    Then for $f\in\mathcal{S}'(\R)$ we let
    $$
        \norm{f \sep \mathcal{E}^s_{u,p,q}(\R)}_K^{\clubsuit} 
        := \norm{ \mathcal{F}^{-1} [ \varphi_0 \, \mathcal{F}f ] \sep \mathcal{M}^{u}_{p}(\R)} + \abs{f}_K, \qquad K\in\N_0\cup\{\infty\},
    $$
    and
    $$
        \norm{f \sep \mathcal{E}^s_{u,p,q}(\R)}_K
        := 2^{-Ks} \norm{ \mathcal{F}^{-1} \big[ (D_{2^{K+1}}\varphi_0) \, \mathcal{F}f \big] \sep \mathcal{M}^{u}_{p}(\R)} + \abs{f}_K,
        \qquad K\in\N_0,
    $$
    where (with suitable modification for $q=\infty$) we set
    $$
        \abs{f}_K :=  \norm{ \bigg( \sum_{k=-K}^\infty 2^{ksq} \abs{\mathcal{F}^{-1}[\psi_k\,\mathcal{F}f](\cdot)}^q \bigg)^{\frac{1}{q}} \sep \mathcal{M}^u_p(\R)},
        \qquad K\in\mathbb{Z}\cup\{\infty\}.
    $$
\end{defi}

Our proof is based on the new Fourier multiplier \autoref{prop:FM_M} which allows to derive the subsequent estimates.
\begin{lemma}\label{lem:NormK}
    Let $f\in\mathcal{S}'(\R)$ as well as $0 < p \leq u < \infty$, $0 < q \leq \infty$ and $s\in\mathbb{R}$.
    \begin{enumerate}
        \item Set $\mu:=\min\{p,q\}$. Then $\abs{f}_{-1} 
                \lesssim \norm{f \sep \mathcal{E}^s_{u,p,q}(\R)}$
        while for $K\in\N_0\cup\{\infty\}$ we have
            \begin{align*}
                \abs{f}_K 
                \lesssim \bigg( \sum_{\ell=0}^K 2^{\ell(\sigma_p-s) \mu} \bigg)^{\frac{1}{\mu}} \norm{f \sep \mathcal{E}^s_{u,p,q}(\R)}.
            \end{align*}
        \item For $K\in\N_0$ there holds
            \begin{align*}
                \norm{\mathcal{F}^{-1}\big[ (D_{2^{K+1}}\varphi_0) \mathcal{F}f \big] \sep \mathcal{M}^u_p(\R)}
                \lesssim 2^{K\sigma_p} \norm{\mathcal{F}^{-1} [ \varphi_0 \, \mathcal{F}f ] \sep \mathcal{M}^u_p(\R)}
            \end{align*}
            and
            \begin{align*}
                \norm{ \mathcal{F}^{-1} [ \varphi_0 \, \mathcal{F}f ] \sep \mathcal{M}^{u}_{p}(\R)}
                &\lesssim \norm{ \mathcal{F}^{-1} [ (D_{2^{K+1}}\varphi_0) \, \mathcal{F}f ] \sep \mathcal{M}^{u}_{p}(\R)} + \bigg( \sum_{\ell=0}^{K} 2^{\ell s} \bigg) \abs{f}_{K}.
            \end{align*}
    \end{enumerate}
    Therein all implied constants are independent of $f$ and $K$.
\end{lemma}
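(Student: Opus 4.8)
The plan is to prove both parts by reducing the low-frequency building blocks $\mathcal{F}^{-1}[\psi_k\,\mathcal{F}f]$ with $k\leq 0$ to a single fixed cut-off and then invoking the band-limited Fourier multiplier \autoref{prop:FM_M}, whose right-hand side automatically produces the $L_{\min\{1,p\}}$-norms and $\sigma_p$-powers appearing in the assertion. All expressions occurring are either finite or $+\infty$, so no convergence questions arise, and I would use freely the identity \eqref{eq:FTj_phi}, the group law $D_{\lambda_1}\circ D_{\lambda_2}=D_{\lambda_1\lambda_2}$, and the Morrey estimates of \autoref{lem:M}.

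\emph{Part (i).} Since $\psi_k=\varphi_k$ for $k\in\N$, one first observes $\abs{f}_{-1}=\norm{\big(\sum_{k\geq 1}2^{ksq}\abs{\mathcal{F}^{-1}[\varphi_k\,\mathcal{F}f](\cdot)}^q\big)^{1/q}\sep\mathcal{M}^{u}_{p}(\R)}\leq\norm{f\sep\mathcal{E}^{s}_{u,p,q}(\R)}$ by \autoref{lem:M}(i), simply dropping the $k=0$ term. For $K\in\N_0\cup\{\infty\}$ one splits the index set $\{k\geq-K\}$ into $\{-K,\dots,0\}$ and $\{k\geq 1\}$; quasi-subadditivity of $\norm{\cdot\sep\mathcal{M}^{u}_{p}(\R)}$ (\autoref{lem:M}(iv)) and of the $\ell_q$-(quasi-)norm then reduce the claim to bounding the contribution $I$ of $\{-K,\dots,0\}$, the remaining part being exactly $\abs{f}_{-1}$. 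For $k\leq 0$ one has $\supp\psi_k\subseteq B(0,2)$, and $\widetilde\varphi_0:=D_{2^{-1}}\varphi_0=\varphi_0+\varphi_1\equiv 1$ on $B(0,2b)\supseteq B(0,2)$, so $\mathcal{F}^{-1}[\psi_k\,\mathcal{F}f]=\mathcal{F}^{-1}[\psi_k\,\mathcal{F}\widetilde f_0]$ with $\widetilde f_0:=\mathcal{F}^{-1}[\widetilde\varphi_0\,\mathcal{F}f]$ band-limited to $B(0,4)$ and, by \autoref{lem:M}(i),(iv), $\norm{\widetilde f_0\sep\mathcal{M}^{u}_{p}(\R)}\lesssim\norm{f\sep\mathcal{E}^{s}_{u,p,q}(\R)}$ as in \eqref{eq:proof_f0}. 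Applying \autoref{prop:FM_M} with $m=2$, $\ell=4$ and inserting $\mathcal{F}^{-1}\psi_k=2^{kd}D_{2^k}[\mathcal{F}^{-1}\psi]$ together with $\norm{D_{2^k}g\sep L_{r}(\R)}=2^{-kd/r}\norm{g\sep L_{r}(\R)}$ and $d\big(\tfrac{1}{\min\{1,p\}}-1\big)=\sigma_p$ gives $\norm{\mathcal{F}^{-1}[\psi_k\,\mathcal{F}f]\sep\mathcal{M}^{u}_{p}(\R)}\lesssim 2^{-k\sigma_p}\norm{f\sep\mathcal{E}^{s}_{u,p,q}(\R)}$, just as in \eqref{eq:step1}. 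With $\mu:=\min\{p,q\}$ (so $\mu/q\leq 1\leq p/\mu$), the argument of Substep~4.1 in the proof of \autoref{thm:upper_bounds} yields $I^\mu\leq\sum_{k=-K}^{0}2^{ks\mu}\norm{\mathcal{F}^{-1}[\psi_k\,\mathcal{F}f]\sep\mathcal{M}^{u}_{p}(\R)}^\mu\lesssim\big(\sum_{\ell=0}^{K}2^{\ell(\sigma_p-s)\mu}\big)\norm{f\sep\mathcal{E}^{s}_{u,p,q}(\R)}^\mu$ after setting $\ell=-k$; since this last bracket is $\geq 1$, the $\abs{f}_{-1}$-term is absorbed and (i) follows (for $q=\infty$ one takes $\mu=p$ and replaces the inner $\ell_q$-sums by suprema, with the usual modifications).

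\emph{Part (ii).} For the first inequality, $\supp(D_{2^{K+1}}\varphi_0)\subseteq B(0,2^{-K})\subseteq B(0,1)$ and $\varphi_0\equiv 1$ there, so $\mathcal{F}^{-1}[(D_{2^{K+1}}\varphi_0)\,\mathcal{F}f]=\mathcal{F}^{-1}[(D_{2^{K+1}}\varphi_0)\,\mathcal{F}g_0]$ with $g_0:=\mathcal{F}^{-1}[\varphi_0\,\mathcal{F}f]$ band-limited to $B(0,2)$; if $g_0\notin\mathcal{M}^{u}_{p}(\R)$ nothing is to prove, otherwise \autoref{prop:FM_M} with $m=1$, $\ell=2$ and $\mathcal{F}^{-1}(D_{2^{K+1}}\varphi_0)=2^{-(K+1)d}D_{2^{-(K+1)}}[\mathcal{F}^{-1}\varphi_0]$ produces the factor $2^{(K+1)\sigma_p}\sim 2^{K\sigma_p}$, exactly as in \eqref{eq:step1}. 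For the second inequality the key observation is the telescoping identity $\varphi_0=D_{2^{K+1}}\varphi_0+\sum_{\ell=0}^{K}\psi_{-\ell}$, which follows from $\psi_{-\ell}=D_{2^\ell}\psi=D_{2^\ell}\varphi_0-D_{2^{\ell+1}}\varphi_0$ and the group law; hence $\mathcal{F}^{-1}[\varphi_0\,\mathcal{F}f]-\mathcal{F}^{-1}[(D_{2^{K+1}}\varphi_0)\,\mathcal{F}f]=\sum_{\ell=0}^{K}\mathcal{F}^{-1}[\psi_{-\ell}\,\mathcal{F}f]$. Using the pointwise bound $\sum_{\ell=0}^{K}b_\ell\leq\big(\sum_{\ell=0}^{K}2^{\ell s}\big)\max_{0\leq\ell\leq K}2^{-\ell s}b_\ell$ with $b_\ell:=\abs{\mathcal{F}^{-1}[\psi_{-\ell}\,\mathcal{F}f](\cdot)}$ and estimating the maximum by the $\ell_q$-(quasi-)norm $\big(\sum_{k\geq-K}2^{ksq}\abs{\mathcal{F}^{-1}[\psi_k\,\mathcal{F}f](\cdot)}^q\big)^{1/q}$ (trivially when $q=\infty$), \autoref{lem:M}(i) together with quasi-subadditivity of $\norm{\cdot\sep\mathcal{M}^{u}_{p}(\R)}$ yields the claimed bound. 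I expect the main obstacle to be the bookkeeping in part~(i): arranging the $L_{\min\{1,p\}}$-scaling of $\mathcal{F}^{-1}\psi_k$ so that it produces precisely the power $2^{-k\sigma_p}$ and then re-indexing the resulting (sub-)geometric sum so that it matches $\big(\sum_{\ell=0}^{K}2^{\ell(\sigma_p-s)\mu}\big)^{1/\mu}$; everything else is a careful repetition of the arguments already used for \autoref{thm:upper_bounds}.
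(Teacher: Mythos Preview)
Your proposal is correct and follows essentially the same approach as the paper: both split $\abs{f}_K$ into the $k\geq 1$ part (which equals $\abs{f}_{-1}$) and the low-frequency part $-K\leq k\leq 0$, handle the latter via \autoref{prop:FM_M} applied to $\widetilde f_0=\mathcal{F}^{-1}[(\varphi_0+\varphi_1)\mathcal{F}f]$ exactly as in Substep~4.1 of the proof of \autoref{thm:upper_bounds}, and for part~(ii) use the telescoping identity $\varphi_0=D_{2^{K+1}}\varphi_0+\sum_{\ell=0}^K\psi_{-\ell}$ together with the pointwise factoring of $\sum_\ell b_\ell$ by $\max_\ell 2^{-\ell s}b_\ell$. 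One small slip: $\widetilde\varphi_0=D_{2^{-1}}\varphi_0$ is identically $1$ on $B(0,2a)\supseteq B(0,2)$, not on $B(0,2b)$ as you wrote, but this does not affect the argument since $\supp\psi_k\subseteq B(0,2)$ for $k\leq 0$ is all that is needed.
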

\begin{proof}
    W.l.o.g.\ assume $q<\infty$ and let $u_k := \mathcal{F}^{-1} [ \psi_k \, \mathcal{F}f ] = \mathcal{F}^{-1} \big[ (D_{2^{-k}} \psi) \, \mathcal{F}f \big]$ for $k\in\mathbb{Z}$.

    \emph{Step 1. } 
    To show (i) we may assume that $f\in\mathcal{E}^s_{u,p,q}(\mathbb{R}^{d})$. Since $\psi_k=\varphi_k$ for $k\in\N$, we have
    \begin{align*}
        \norm{f \sep \mathcal{E}^s_{u,p,q}(\R)} 
        = \norm{\bigg( \abs{\mathcal{F}^{-1} [ \varphi_0 \, \mathcal{F}f ](\cdot)}^q + \sum_{k=1}^\infty 2^{ksq} \abs{u_k(\cdot)}^q \bigg)^{\frac{1}{q}} \sep \mathcal{M}^{u}_{p}(\R)}
    \end{align*}
    such that using \autoref{lem:M}(i) and (ii) we conclude
    \begin{align}\label{eq:proof_K}
        \norm{f \sep \mathcal{E}^s_{u,p,q}(\R)} 
        \sim \norm{ \mathcal{F}^{-1} [ \varphi_0 \, \mathcal{F}f ] \sep \mathcal{M}^{u}_{p}(\R)} + \abs{f}_{-1}.
    \end{align}
    This shows (i) for $K=-1$. 
    If $K\in\N_0$, then
    \begin{align*}
        \bigg( \sum_{k=-K}^{\infty} 2^{ksq} \abs{u_k(x)}^q \bigg)^{\frac{1}{q}}
        \lesssim \bigg( \sum_{k=-K}^{0} 2^{ksq}\abs{u_k(x)}^q \bigg)^{\frac{1}{q}} + \bigg( \sum_{k=1}^{\infty} 2^{ksq}\abs{ u_k(x)}^q \bigg)^{\frac{1}{q}}, \qquad x\in\R,
    \end{align*}
    such that from \autoref{lem:M}(i) and $\varphi_k=\psi_k$ if $k\in\N$ it follows
    \begin{align*}
        \abs{f}_K
        &= \norm{ \bigg( \sum_{k=-K}^{\infty} 2^{k sq} \abs{u_{k}(\cdot)}^q \bigg)^{\frac{1}{q}} \sep \mathcal{M}^u_p(\R)} \\
        &\lesssim \norm{ \bigg( \sum_{\ell=0}^{K} 2^{-\ell sq} \abs{u_{-\ell}(\cdot)}^q \bigg)^{\frac{1}{q}} \sep \mathcal{M}^u_p(\R)} + \norm{f \sep \mathcal{E}^s_{u,p,q}(\R)}.
    \end{align*}
    Now \autoref{prop:FM_M} can be used to show that with $\mu=\min\{p,q\}$ there holds
    \begin{align*}
        \norm{ \bigg( \sum_{\ell=0}^{K} 2^{-\ell sq} \abs{ u_{-\ell}(\cdot)}^q \bigg)^{\frac{1}{q}} \sep \mathcal{M}^u_p(\R) }
        &\lesssim \bigg( \sum_{\ell=0}^{K} 2^{\ell(\sigma_p-s)\mu} \bigg)^{\frac{1}{\mu}} \norm{f \sep \mathcal{E}^{s}_{u,p,q}(\R)},
    \end{align*}
    see \eqref{eq:proof_II} and \eqref{eq:proof_A2} for details.
    This proves (i) for $K\in\N_0$.
    However, exactly the same reasoning can be used if $K=\infty$.

\emph{Step 2. } 
For the first bound in (ii) we may assume that $f_0:=\mathcal{F}^{-1} [ \varphi_0 \, \mathcal{F}f ] \in \mathcal{M}^u_p(\R)$. 
Then the desired estimate follows from \autoref{prop:FM_M}; see \eqref{eq:step1}.
To show also the second bound, we note that $\varphi_0 = D_{2^{K+1}}\varphi_0 + \psi_{-K} + \ldots + \psi_0$. Hence, with $u_k$ as above, \autoref{lem:M}(i) gives
\begin{align*}
    \norm{ f_0 \sep \mathcal{M}^{u}_{p}(\R)}
    &= \norm{ \mathcal{F}^{-1} [ (D_{2^{K+1}}\varphi_0) \, \mathcal{F}f ] + \sum_{k=-K}^0 u_k \sep \mathcal{M}^{u}_{p}(\R)} \\
    &\lesssim \norm{ \mathcal{F}^{-1} [ (D_{2^{K+1}}\varphi_0) \, \mathcal{F}f ] \sep \mathcal{M}^{u}_{p}(\R)} + \norm{\sum_{k=-K}^0 \abs{ u_k(\cdot)} \sep \mathcal{M}^{u}_{p}(\R)}.
\end{align*}
Moreover, for all $x\in\R$ we have
\begin{align*}
    \sum_{k=-K}^0 \abs{u_k(x)}
    \leq \max_{j=-K,\ldots,0} \abs{2^{js}\,u_j(x)} \sum_{k=-K}^0 2^{-ks} 
    \leq \bigg( \sum_{j=-K}^\infty \abs{2^{js}\,u_j(x)}^q \bigg)^{\frac{1}{q}} \sum_{\ell=0}^{K} 2^{\ell s}
\end{align*}
which shows $\norm{\sum_{k=-K}^0 \abs{ u_k(\cdot)} \sep \mathcal{M}^{u}_{p}(\R)} \leq \big( \sum_{\ell=0}^{K} 2^{\ell s} \big) \abs{f}_{K}$ and hence
\begin{align*}
    \norm{ \mathcal{F}^{-1} [ \varphi_0 \, \mathcal{F}f ] \sep \mathcal{M}^{u}_{p}(\R)}
    &\lesssim \norm{ \mathcal{F}^{-1} [ (D_{2^{K+1}}\varphi_0) \, \mathcal{F}f ] \sep \mathcal{M}^{u}_{p}(\R)} + \bigg( \sum_{\ell=0}^{K} 2^{\ell s} \bigg) \abs{f}_{K}
\end{align*}
as claimed.
\end{proof}

Now the announced characterization reads as follows.
\begin{theorem}\label{thm:characterization}
    Let $ 0 < p \leq u < \infty$, $0 < q \leq \infty$ as well as $s \in\mathbb{R}$ and $K\in\N_0$.
    Then the expressions $\norm{\,\cdot \sep \mathcal{E}^s_{u,p,q}(\R)}$, $\norm{\,\cdot \sep \mathcal{E}^s_{u,p,q}(\R)}_K^{\clubsuit}$ and $\norm{\,\cdot \sep \mathcal{E}^s_{u,p,q}(\R)}_K$ 
    are mutually equivalent on $\mathcal{S}'(\R)$.
    If $s>\sigma_p$, then also $\norm{\,\cdot \sep \mathcal{E}^s_{u,p,q}(\R)}_\infty^{\clubsuit} \sim \norm{\,\cdot \sep \mathcal{E}^s_{u,p,q}(\R)}$ on $\mathcal{S}'(\R)$.
\end{theorem}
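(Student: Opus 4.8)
The plan is to deduce the entire statement from \autoref{lem:NormK} together with the elementary equivalence $\norm{f\sep\mathcal{E}^s_{u,p,q}(\R)}\sim\norm{\mathcal{F}^{-1}[\varphi_0\,\mathcal{F}f]\sep\mathcal{M}^u_p(\R)}+\abs{f}_{-1}$ recorded in \eqref{eq:proof_K} (which itself is immediate from $\psi_k=\varphi_k$ for $k\in\N$ and \autoref{lem:M}(i),(ii)). The conceptual point is that for a \emph{fixed} $K\in\N_0$ all bookkeeping factors occurring in \autoref{lem:NormK} — namely $2^{K\sigma_p}$, $2^{-Ks}$, $\sum_{\ell=0}^K 2^{\ell s}$, and $(\sum_{\ell=0}^K 2^{\ell(\sigma_p-s)\mu})^{1/\mu}$ with $\mu:=\min\{p,q\}$ — are finite constants independent of $f$. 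Besides this, I would only use the trivial pointwise bound $\abs{\mathcal{F}^{-1}[\varphi_0\,\mathcal{F}f](x)}\le(\sum_{k=0}^\infty 2^{ksq}\abs{\mathcal{F}^{-1}[\varphi_k\,\mathcal{F}f](x)}^q)^{1/q}$ (which gives $\norm{\mathcal{F}^{-1}[\varphi_0\,\mathcal{F}f]\sep\mathcal{M}^u_p(\R)}\lesssim\norm{f\sep\mathcal{E}^s_{u,p,q}(\R)}$ via \autoref{lem:M}(i)) and, since $\{1,2,\dots\}\subseteq\{-K,-K+1,\dots\}$, the monotonicity $\abs{f}_{-1}\le\abs{f}_K\le\abs{f}_\infty$, again from \autoref{lem:M}(i) (usual modification for $q=\infty$).

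First I would handle the $\clubsuit$-norm with $K\in\N_0$. The bound $\norm{f\sep\mathcal{E}^s_{u,p,q}(\R)}_K^{\clubsuit}\lesssim\norm{f\sep\mathcal{E}^s_{u,p,q}(\R)}$ is just the combination of the above estimate for the first term with \autoref{lem:NormK}(i), while the reverse bound follows from \eqref{eq:proof_K} and $\abs{f}_{-1}\le\abs{f}_K$. Next, for $\norm{\,\cdot\,}_K$ the only term differing from the $\clubsuit$-norm is the first one, which \autoref{lem:NormK}(ii) controls in both directions: its first inequality yields $2^{-Ks}\norm{\mathcal{F}^{-1}[(D_{2^{K+1}}\varphi_0)\,\mathcal{F}f]\sep\mathcal{M}^u_p(\R)}\lesssim 2^{K(\sigma_p-s)}\norm{\mathcal{F}^{-1}[\varphi_0\,\mathcal{F}f]\sep\mathcal{M}^u_p(\R)}$, hence $\norm{f\sep\mathcal{E}^s_{u,p,q}(\R)}_K\lesssim\norm{f\sep\mathcal{E}^s_{u,p,q}(\R)}_K^{\clubsuit}$; its second inequality, together with $2^{-Ks}\norm{\mathcal{F}^{-1}[(D_{2^{K+1}}\varphi_0)\,\mathcal{F}f]\sep\mathcal{M}^u_p(\R)}\le\norm{f\sep\mathcal{E}^s_{u,p,q}(\R)}_K$ and $\abs{f}_K\le\norm{f\sep\mathcal{E}^s_{u,p,q}(\R)}_K$, gives $\norm{\mathcal{F}^{-1}[\varphi_0\,\mathcal{F}f]\sep\mathcal{M}^u_p(\R)}\lesssim\norm{f\sep\mathcal{E}^s_{u,p,q}(\R)}_K$ and therefore $\norm{f\sep\mathcal{E}^s_{u,p,q}(\R)}_K^{\clubsuit}\lesssim\norm{f\sep\mathcal{E}^s_{u,p,q}(\R)}_K$. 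Chaining these equivalences proves the first assertion. Observe that no quasi-norm subtlety intervenes, since at each step one merely compares $A+B$ with $A'+B$ for $A\sim A'$ up to constants independent of $f$.

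Finally, for $s>\sigma_p$ and $K=\infty$ I would invoke \autoref{lem:NormK}(i) with $K=\infty$, which gives $\abs{f}_\infty\lesssim(\sum_{\ell=0}^\infty 2^{\ell(\sigma_p-s)\mu})^{1/\mu}\norm{f\sep\mathcal{E}^s_{u,p,q}(\R)}$; since now the exponent $(\sigma_p-s)\mu$ is \emph{strictly negative}, the geometric series converges to a finite constant independent of $f$, and together with $\norm{\mathcal{F}^{-1}[\varphi_0\,\mathcal{F}f]\sep\mathcal{M}^u_p(\R)}\lesssim\norm{f\sep\mathcal{E}^s_{u,p,q}(\R)}$ this yields $\norm{f\sep\mathcal{E}^s_{u,p,q}(\R)}_\infty^{\clubsuit}\lesssim\norm{f\sep\mathcal{E}^s_{u,p,q}(\R)}$; the converse follows from $\abs{f}_{-1}\le\abs{f}_\infty$ and \eqref{eq:proof_K}. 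I do not expect a real obstacle at this stage: the genuine work — the new Fourier multiplier \autoref{prop:FM_M} and the estimates derived from it — already sits inside \autoref{lem:NormK}, and the only things requiring care are the bookkeeping that keeps all prefactors $f$-independent and the remark that the $K=\infty$ statement is truly confined to $s>\sigma_p$, the bound in \autoref{lem:NormK}(i) becoming vacuous otherwise (in accordance with the logarithmic, resp.\ polynomial, blow-up of $\norm{D_\lambda\sep\mathcal{L}(\mathcal{E}^s_{u,p,q}(\R))}$ exhibited in \autoref{thm:main}(ii),(iii)).
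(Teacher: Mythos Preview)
Your proposal is correct and follows essentially the same approach as the paper: both arguments rest entirely on \autoref{lem:NormK}, the elementary equivalence \eqref{eq:proof_K}, and the monotonicity $\abs{f}_{-1}\le\abs{f}_K$, with the only difference being that you chain $\norm{\cdot}_K\sim\norm{\cdot}_K^{\clubsuit}\sim\norm{\cdot}$ whereas the paper compares $\norm{\cdot}_K$ and $\norm{\cdot}_K^{\clubsuit}$ each directly with $\norm{\cdot}$---an inconsequential organizational choice.
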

\begin{proof}
Let $f\in\mathcal{S}'(\R)$ and w.l.o.g.\ assume $q<\infty$.

\emph{Step 1. }  We show that $\norm{f \sep \mathcal{E}^s_{u,p,q}(\R)}_K^{\clubsuit} \sim_K \norm{f \sep \mathcal{E}^s_{u,p,q}(\R)}$ for all $K\in\N_0$. 
    As clearly $\abs{f}_{-1} \leq \abs{f}_{K}$, Formula \eqref{eq:proof_K} proves 
    $$
        \norm{f \sep \mathcal{E}^s_{u,p,q}(\R)} 
        \sim \norm{ \mathcal{F}^{-1} [ \varphi_0 \, \mathcal{F}f ] \sep \mathcal{M}^{u}_{p}(\R)} + \abs{f}_{-1}
        \lesssim \norm{f \sep \mathcal{E}^s_{u,p,q}(\R)}_K^{\clubsuit}.
    $$
    On the other hand, with $\mu:=\min\{p,q\}$ Formula~\eqref{eq:proof_K} and \autoref{lem:NormK}(i) imply
    \begin{align*}
        \norm{f \sep \mathcal{E}^s_{u,p,q}(\R)}_K^{\clubsuit} 
        \lesssim \norm{f \sep \mathcal{E}^s_{u,p,q}(\R)} + \bigg( \sum_{\ell=0}^K 2^{\ell(\sigma_p-s) \mu} \bigg)^{\frac{1}{\mu}} \norm{f \sep \mathcal{E}^s_{u,p,q}(\R)}
        \lesssim_K \norm{f \sep \mathcal{E}^s_{u,p,q}(\R)}.
    \end{align*}

    \emph{Step 2. } To show that also $\norm{f \sep \mathcal{E}^s_{u,p,q}(\R)} \sim_K \norm{f \sep \mathcal{E}^s_{u,p,q}(\R)}_K$ we again let $K\in\N_0$. Then from \autoref{lem:NormK}(ii) as well as \eqref{eq:proof_K} and \autoref{lem:NormK}(i) it follows 
    \begin{align}
        \norm{f \sep \mathcal{E}^s_{u,p,q}(\R)}_K
        &\lesssim 2^{K(\sigma_p-s)} \norm{ \mathcal{F}^{-1} [ \varphi_0 \, \mathcal{F}f ] \sep \mathcal{M}^{u}_{p}(\R)} + \abs{f}_K \nonumber\\
        &\lesssim \bigg( \sum_{\ell=0}^K 2^{\ell(\sigma_p-s) \mu} \bigg)^{\frac{1}{\mu}} \norm{f \sep \mathcal{E}^s_{u,p,q}(\R)}.\label{eq:proof_upper_bound_K}
    \end{align}
    In turn, \eqref{eq:proof_K} and \autoref{lem:NormK}(ii) as well as $\abs{f}_{-1}\leq \abs{f}_K$ yield that
    \begin{align*}
        \norm{f \sep \mathcal{E}^s_{u,p,q}(\R)}
        &\lesssim \norm{ \mathcal{F}^{-1} [ (D_{2^{K+1}}\varphi_0) \, \mathcal{F}f ] \sep \mathcal{M}^{u}_{p}(\R)} + \bigg( \sum_{\ell=0}^{K} 2^{\ell s} \bigg) \abs{f}_{K} + \abs{f}_{-1} \\
        &\lesssim 2^{Ks} \, 2^{-Ks} \norm{ \mathcal{F}^{-1} [ (D_{2^{K+1}}\varphi_0) \, \mathcal{F}f ] \sep \mathcal{M}^{u}_{p}(\R)} + \bigg( \sum_{\ell=0}^{K} 2^{\ell s} \bigg) \abs{f}_{K} \\
        &\leq \bigg( \sum_{\ell=0}^{K} 2^{\ell s} \bigg) \norm{f \sep \mathcal{E}^s_{u,p,q}(\R)}_K.
    \end{align*}

    \emph{Step 3 (Case $s>\sigma_p$). } To complete the proof, we note that for $K=\infty$ and $s>\sigma_p$ the arguments of Step 1 remain valid such that also $\norm{f \sep \mathcal{E}^s_{u,p,q}(\R)}_\infty^{\clubsuit} \sim \norm{f \sep \mathcal{E}^s_{u,p,q}(\R)}$.
\end{proof}

\begin{remark}
    We note in passing that \autoref{thm:characterization} is in good agreement with the upper bounds we found in \autoref{thm:upper_bounds}. 
    Indeed, for $f \in \mathcal{E}^s_{u,p,q}(\R)$ a somewhat handwavy argument based on \eqref{eq:building_block_shift}, the fact that $D_{2^j}\psi_k=\psi_{k-j}$ for $k,j\in\N$ and \autoref{lem:M}(iii) indicates that $\abs{D_{2^j}f}_{-1} \sim 2^{j(s- \frac{d}{u})} \abs{f}_{j-1}$.
    Therefore, using \eqref{eq:proof_K}, \eqref{eq:building_block_shift}, \autoref{lem:M}(iii) and \eqref{eq:proof_upper_bound_K} we find 
    \begin{align*}
        \norm{D_{2^j}f \sep \mathcal{E}^s_{u,p,q}(\R)} 
        &\sim \norm{ \mathcal{F}^{-1} [ \varphi_0 \, \mathcal{F}(D_{2^j}f) ] \sep \mathcal{M}^{u}_{p}(\R)} + \abs{D_{2^j}f}_{-1} \\
        &\sim 2^{-j \frac{d}{u}} \norm{ \mathcal{F}^{-1} [ (D_{2^j}\varphi_0) \, \mathcal{F}f ] \sep \mathcal{M}^{u}_{p}(\R)} + 2^{j(s- \frac{d}{u})} \abs{f}_{j-1}  \\
        &\sim 2^{j(s-\frac{d}{u})} \norm{f \sep \mathcal{E}^s_{u,p,q}(\R)}_{j-1} \\
        &\lesssim 2^{j(s-\frac{d}{u})} \bigg( \sum_{\ell=0}^{j-1} 2^{\ell(\sigma_p-s) \mu} \bigg)^{\frac{1}{\mu}} \norm{f \sep \mathcal{E}^s_{u,p,q}(\R)}, \qquad j\in\N,
    \end{align*}
    where again $\mu=\min\{p,q\}$. This covers almost all statements of \autoref{thm:upper_bounds} if $\lambda=2^j$ with $j\in\N$ (which in view of \autoref{rem:reduction} is sufficient for results with general $\lambda>1$).
\end{remark}

Finally, let us take the opportunity to construct some further equivalent (quasi-)norms in $\mathcal{E}^s_{u,p,q}(\R)$ for parameters that ensure that $\mathcal{E}^s_{u,p,q}(\R) \hookrightarrow \mathcal{M}^u_p(\R)$, i.e.\ in a setting where we actually deal with \emph{functions} rather than distributions.
Our result particularly covers and extends a well-known assertion for Triebel-Lizorkin spaces $F^s_{p,q}(\R)=\mathcal{E}^s_{p,p,q}(\R)$ due to Triebel~\cite[Thm.~2.3.3(ii)]{Tr92}.

\begin{theorem}\label{thm:characterization_by_M}
    Let $0<p\leq u < \infty$, $0<q\leq \infty$ and $s>\frac{p}{u} \, \sigma_{p}$. 
    Further let $f\in \mathcal{E}^s_{u,p,q}(\R)$ and $K\in \{-1\} \cup \N_0$. 
    Then with constants independent of $f$ there holds
    $$
        \norm{f \sep \mathcal{E}^s_{u,p,q}(\R)}
        \sim_K \norm{f \sep \mathcal{E}^s_{u,p,q}(\R)}^{\spadesuit}_K := \norm{f \sep \mathcal{M}^u_p(\R)} + \abs{f}_K.
    $$
    If even $s>\sigma_{p}$, then the implied constants do not depend on $K$ and the assertion remains valid for $K=\infty$.
\end{theorem}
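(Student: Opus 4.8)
The plan is to establish the two inequalities constituting the claimed equivalence separately, keeping track of when the implied constants can be chosen independently of $K$. For the estimate $\norm{f\sep\mathcal{E}^s_{u,p,q}(\R)}^{\spadesuit}_K\lesssim_K\norm{f\sep\mathcal{E}^s_{u,p,q}(\R)}$ I would bound the two summands of the right-hand side individually. The term $\norm{f\sep\mathcal{M}^u_p(\R)}$ is controlled by the continuous embedding $\mathcal{E}^s_{u,p,q}(\R)\hookrightarrow\mathcal{M}^u_p(\R)$: since $s>\frac{p}{u}\sigma_p\ge 0$ is a strict inequality, one may first pass, via the elementary embedding of Triebel–Lizorkin–Morrey spaces with coarser smoothness (valid for arbitrary fine indices), to $\mathcal{E}^{0}_{u,p,2}(\R)$ if $p\ge 1$ resp.\ to $\mathcal{E}^{\frac{p}{u}\sigma_p}_{u,p,1}(\R)$ if $p<1$, and then invoke \autoref{prop:E_in_M}(i)/(ii). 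The term $\abs{f}_K$ is exactly what \autoref{lem:NormK}(i) estimates: for $K\in\{-1\}\cup\N_0$ it gives $\abs{f}_K\lesssim\big(\sum_{\ell=0}^K 2^{\ell(\sigma_p-s)\mu}\big)^{1/\mu}\,\norm{f\sep\mathcal{E}^s_{u,p,q}(\R)}$ with $\mu=\min\{p,q\}$, a bound whose constant depends on $K$ in general but becomes $K$-uniform --- and then, moreover, applicable with $K=\infty$ --- precisely when $s>\sigma_p$, because only then the series $\sum_{\ell\ge0}2^{\ell(\sigma_p-s)\mu}$ converges.

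For the reverse inequality $\norm{f\sep\mathcal{E}^s_{u,p,q}(\R)}\lesssim_K\norm{f\sep\mathcal{E}^s_{u,p,q}(\R)}^{\spadesuit}_K$ I would start from formula \eqref{eq:proof_K}, i.e.\ $\norm{f\sep\mathcal{E}^s_{u,p,q}(\R)}\sim\norm{\mathcal{F}^{-1}[\varphi_0\,\mathcal{F}f]\sep\mathcal{M}^u_p(\R)}+\abs{f}_{-1}$. As $\abs{f}_{-1}\le\abs{f}_K$ for every admissible $K$ by \autoref{lem:M}(i), the task reduces to bounding the single low-frequency block $\norm{\mathcal{F}^{-1}[\varphi_0\,\mathcal{F}f]\sep\mathcal{M}^u_p(\R)}$ by $\norm{f\sep\mathcal{M}^u_p(\R)}+\abs{f}_K$ with a $K$-independent constant. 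If $p\ge 1$ this is immediate, since $\mathcal{F}^{-1}[\varphi_0\,\mathcal{F}f]=(\mathcal{F}^{-1}\varphi_0)\ast f$ and Young's inequality for Morrey spaces (as in the Minkowski computation in the proof of \autoref{prop:FM_M}) yields $\norm{(\mathcal{F}^{-1}\varphi_0)\ast f\sep\mathcal{M}^u_p(\R)}\lesssim\norm{\mathcal{F}^{-1}\varphi_0\sep L_1(\R)}\,\norm{f\sep\mathcal{M}^u_p(\R)}$.

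The case $p<1$ is what I expect to be the main obstacle, as Young's inequality is no longer available on Morrey quasi-Banach spaces. Here the idea is to use the convergence phenomenon behind \autoref{prop:E_in_M}(ii) and \autoref{rem:E_in_M}: because $s>\frac{p}{u}\sigma_p>0$, the embedding of $\mathcal{E}^s_{u,p,q}(\R)$ into the space $\mathcal{E}^{\frac{p}{u}\sigma_p}_{u,p,1}(\R)$ treated by \autoref{prop:E_in_M}(ii) shows that $f=\sum_{k=0}^\infty\mathcal{F}^{-1}[\varphi_k\,\mathcal{F}f]$ holds not only in $\mathcal{S}'(\R)$ but also in $\mathcal{M}^u_p(\R)$. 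Consequently $\mathcal{F}^{-1}[\varphi_0\,\mathcal{F}f]=f-\sum_{k=1}^\infty\mathcal{F}^{-1}[\varphi_k\,\mathcal{F}f]$ in $\mathcal{M}^u_p(\R)$, and combining the $\tau$-triangle inequality of \autoref{lem:M}(iv) (with $\tau=p$), the elementary pointwise bound $\sum_{k\ge1}\abs{\mathcal{F}^{-1}[\varphi_k\,\mathcal{F}f](x)}\lesssim\big(\sum_{k\ge1}2^{ksq}\abs{\mathcal{F}^{-1}[\varphi_k\,\mathcal{F}f](x)}^q\big)^{1/q}$ (with finite constant because $s>0$) and \autoref{lem:M}(i) gives $\norm{\mathcal{F}^{-1}[\varphi_0\,\mathcal{F}f]\sep\mathcal{M}^u_p(\R)}\lesssim\norm{f\sep\mathcal{M}^u_p(\R)}+\abs{f}_{-1}\le\norm{f\sep\mathcal{M}^u_p(\R)}+\abs{f}_K$ with $K$-independent constant. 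Since $\abs{f}_\infty\ge\abs{f}_{-1}$, the same argument also covers $K=\infty$. Putting the two directions together yields $\norm{\cdot\sep\mathcal{E}^s_{u,p,q}(\R)}\sim_K\norm{\cdot\sep\mathcal{E}^s_{u,p,q}(\R)}^{\spadesuit}_K$ for $K\in\{-1\}\cup\N_0$, and the $K$-free version (with $K=\infty$ admissible) under the additional hypothesis $s>\sigma_p$. I would stress that the precise threshold $s>\frac{p}{u}\sigma_p$, rather than merely $s>0$, is used exactly once, namely to guarantee the Morrey-norm convergence of the Littlewood–Paley decomposition of $f$ in the range $p<1$.
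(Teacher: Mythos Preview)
Your proposal is correct and follows essentially the same strategy as the paper's proof: both directions rest on the embedding $\mathcal{E}^s_{u,p,q}(\R)\hookrightarrow\mathcal{M}^u_p(\R)$ together with \autoref{lem:NormK}(i), and the key step in the reverse inequality is to write $\mathcal{F}^{-1}[\varphi_0\,\mathcal{F}f]=f-\sum_{k\ge 1}\mathcal{F}^{-1}[\varphi_k\,\mathcal{F}f]$ as an identity in $\mathcal{M}^u_p(\R)$ and then bound the tail by $\abs{f}_{-1}$. The only differences are cosmetic: the paper does not split into the cases $p\ge 1$ and $p<1$ but runs the decomposition argument uniformly with $\tau=\min\{1,p\}$, and it estimates $\norm{\sum_{k\ge 1}u_k\sep\mathcal{M}^u_p(\R)}$ by summing the individual Morrey norms $\norm{u_k\sep\mathcal{M}^u_p(\R)}\le 2^{-ks}\abs{f}_{-1}$ via \autoref{lem:M}(iv), whereas you use a single pointwise H\"older bound before taking the Morrey norm.
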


\begin{remark}
    We stress that our proof below only shows that $\norm{\cdot \sep \mathcal{E}^s_{u,p,q}(\R)}^{\spadesuit}_K$ defines an equivalent (quasi-)norm \emph{in} $\mathcal{E}^s_{u,p,q}(\R)$.  
    It does not guarantee that $f\in\mathcal{S}'(\R)$ belongs to $\mathcal{E}^s_{u,p,q}(\R)$ if $\norm{f \sep \mathcal{E}^s_{u,p,q}(\R)}^{\spadesuit}_K< \infty$. Indeed, for general $f\in\mathcal{S}'(\R)$ the latter expression is not even well-defined.
    Hence, in contrast to \autoref{thm:characterization} above, we do not claim to have a characterization. 
\end{remark}
\begin{proof}[Proof (of \autoref{thm:characterization_by_M})]
    Let $f\in\mathcal{E}^s_{u,p,q}(\R)$, where $s>\frac{p}{u} \, \sigma_{p}$.    
    Then standard embeddings (see, e.g., \cite[p.50]{HaMoSkr2020}) and \autoref{prop:E_in_M}(ii) yield that for sufficiently small $\delta>0$, we have
    \begin{align}\label{eq:proof_embeddings}
        \mathcal{E}^s_{u,p,q}(\R) 
        \hookrightarrow \mathcal{E}^{s-\delta}_{u,p,1}(\R)
        \hookrightarrow \mathcal{E}^{\frac{p}{u}\sigma_p}_{u,p,2}(\R)
        \hookrightarrow \mathcal{M}^u_p(\R).
    \end{align}
    Together with \autoref{lem:NormK}(i) this implies $\norm{f \sep \mathcal{E}^s_{u,p,q}(\R)}^{\spadesuit}_{-1} 
    \lesssim \norm{f \sep \mathcal{E}^s_{u,p,q}(\R)}$
as well as
\begin{align*}
    \norm{f \sep \mathcal{E}^s_{u,p,q}(\R)}^{\spadesuit}_K
    &\lesssim \norm{f \sep \mathcal{E}^s_{u,p,q}(\R)} + \bigg( \sum_{\ell=0}^K 2^{\ell(\sigma_p-s) \mu} \bigg)^{\frac{1}{\mu}} \norm{f \sep \mathcal{E}^s_{u,p,q}(\R)} \\
    &\lesssim \bigg( \sum_{\ell=0}^K 2^{\ell(\sigma_p-s) \mu} \bigg)^{\frac{1}{\mu}} \norm{f \sep \mathcal{E}^s_{u,p,q}(\R)}
\end{align*}
for every $K\in\N_0\cup\{\infty\}$. On the other hand, for $k\in\N$ and $u_k:=\mathcal{F}^{-1}(\varphi_k\,\mathcal{F}f) \in \mathcal{S}'(\R)$ the dyadic annuli criterion (\autoref{prop:dyadic_crit}) as well as \autoref{lem:M}(i) and \autoref{lem:NormK}(i) show that $u_k$ belongs to $\mathcal{E}^s_{u,p,q}(\R)$ with
\begin{align*}
    \norm{u_k \sep \mathcal{E}^s_{u,p,q}(\R)}
    &\lesssim \norm{2^{ks} u_k \sep \mathcal{M}^u_p(\R)} \\
    &\leq \norm{ \bigg( \sum_{k=1}^\infty 2^{ksq} \abs{u_k(\cdot)}^q \bigg)^{\frac{1}{q}} \sep \mathcal{M}^u_p(\R)} \\
    &= \abs{f}_{-1} \\
    &\lesssim \norm{f \sep \mathcal{E}^s_{u,p,q}(\R)}.
\end{align*}
Due to \eqref{eq:proof_embeddings} from this it also follows that $u_k\in \mathcal{M}^u_p(\R)$, where
\begin{align}\label{eq:proof_bound_uk}
    \norm{u_k \sep \mathcal{M}^u_p(\R)} 
    = 2^{-ks} \norm{2^{ks} u_k \sep \mathcal{M}^u_p(\R)}
    \leq 2^{-ks} \abs{f}_{-1}, \qquad k\in\N.
\end{align}
Similarly $f_0 := \mathcal{F}^{-1}(\varphi_0\,\mathcal{F}f) \in \mathcal{S}'(\R)$ belongs to $\mathcal{E}^s_{u,p,q}(\R) \hookrightarrow \mathcal{M}^u_p(\R)$ as \autoref{prop:dyadic_crit} and \eqref{eq:proof_K} imply $\norm{f_0 \sep \mathcal{E}^s_{u,p,q}(\R)}
    \lesssim \norm{f_0 \sep \mathcal{M}^u_p(\R)}
    \lesssim \norm{f \sep \mathcal{E}^s_{u,p,q}(\R)}$.
Next we recall that $f = f_0 + \sum_{k=1}^\infty u_k$ in $\mathcal{E}^{s-\delta}_{u,p,1}(\R)$ for every $\delta>0$; see again \autoref{prop:dyadic_crit}. 
Therefore, \eqref{eq:proof_embeddings} shows that in $\mathcal{M}^u_p(\R)$ there holds $f_0 = f - \sum_{k=1}^\infty u_k$ with
\begin{align*}
    \norm{f_0 \sep \mathcal{M}^u_p(\R)}
    &\lesssim \norm{f \sep \mathcal{M}^u_p(\R)} + \norm{\sum_{k=1}^\infty u_k \sep \mathcal{M}^u_p(\R)}
    \lesssim \norm{f \sep \mathcal{M}^u_p(\R)} + \abs{f}_{-1},
\end{align*}
where we applied \autoref{lem:M}(iv) with $\tau:=\min\{1,p\}$ as well as \eqref{eq:proof_bound_uk} to estimate
\begin{align*}
    \norm{\sum_{k=1}^\infty u_k \sep \mathcal{M}^u_p(\R)}^\tau
    \leq \sum_{k=1}^\infty \norm{ u_k \sep \mathcal{M}^u_p(\R)}^\tau
    \leq  \sum_{k=1}^\infty 2^{-ks\tau} \, \abs{f}_{-1}^\tau 
    \lesssim \abs{f}_{-1}^\tau.
\end{align*}
For $K\in\N_0\cup\{-1,\infty\}$ we can thus employ \eqref{eq:proof_K} and $\abs{f}_{-1}\leq \abs{f}_K$ to finally conclude that
\begin{align*}
    \norm{f \sep \mathcal{E}^s_{u,p,q}(\R)}
    &\lesssim \norm{f \sep \mathcal{M}^u_p(\R)} + \abs{f}_{-1}
    \leq \norm{f \sep \mathcal{M}^u_p(\R)} + \abs{f}_{K}
    = \norm{f \sep \mathcal{E}^s_{u,p,q}(\R)}_K^{\spadesuit}.
\end{align*}
The proof is complete.
\end{proof}

\vspace{0,3 cm}

\noindent\textbf{Acknowledgments. } The authors like to thank Cornelia Schneider, Petru Cioica-Licht and Joshua Kortum for some valuable discussions.

\phantomsection
\addcontentsline{toc}{section}{References}

\end{document}